\title{Height four formal groups with quadratic complex multiplication.}
\date{July 2016.}
\begin{document}
\begin{abstract}
We construct spectral sequences for computing the cohomology of automorphism groups of formal groups with complex multiplication by a $p$-adic number ring. We then compute the cohomology of the group of automorphisms of a height four formal group law which commute with complex multiplication by the ring of integers in the field $\mathbb{Q}_p(\sqrt{p})$, for primes $p>5$. This is a large subgroup of the height four strict Morava stabilizer group. The group cohomology of this group of automorphisms turns out to have cohomological dimension $8$ and total rank $80$. We then run the $K(4)$-local $E_4$-Adams spectral sequence to compute the homotopy groups of the homotopy fixed-point spectrum of this group's action on the Lubin-Tate/Morava spectrum $E_4$.
\end{abstract}
\maketitle
\tableofcontents

\section{Introduction.}

This paper is intended as a companion and sequel to~\cite{formalmodules4}.
In J.P. May's thesis~\cite{MR2614527}, he used the results of Milnor and Moore's paper~\cite{MR0174052} to set up spectral sequences for computing the cohomology of the Steenrod algebra, the input for the classical Adams spectral sequence; in chapter~6 of~\cite{MR860042}, Ravenel adapts May's spectral sequences for the purpose of computing the cohomology of automorphism groups of formal group laws, i.e., Morava stabilizer groups, which are the input for various spectral sequence methods for computing stable homotopy groups of spheres and Bousfield localizations of various spectra.
In this paper we adapt Ravenel's tools to the task of computing the cohomology of automorphism groups of formal group laws with complex multiplication by a $p$-adic number ring $A$, i.e., formal $A$-modules. We show (Theorem~\ref{its a closed subgroup}) that these automorphism groups are closed subgroups of the Morava stabilizer groups, so that the machinery of~\cite{MR2030586} can be used to construct and compute the homotopy fixed-point spectra of the action of these automorphism groups on Lubin-Tate/Morava $E$-theory spectra; and then, most importantly, we actually use all this machinery to do some nontrivial computations: in Theorem~\ref{coh of ht 2 fm}, we compute the cohomology of the group $\strictAut({}_1\mathbb{G}_{1/2}^{\hat{\mathbb{Z}}_p\left[\sqrt{p}\right]})$ of strict automorphisms of a height four formal group law which commute with complex multiplication by the ring of integers in the field $\mathbb{Q}_p(\sqrt{p})$, for primes $p>5$. This is a large subgroup of the height four strict Morava stabilizer group. The group cohomology $H^*(\strictAut({}_1\mathbb{G}_{1/2}^{\hat{\mathbb{Z}}_p\left[\sqrt{p}\right]}); \mathbb{F}_p)$ turns out to have cohomological dimension $8$, total rank $80$, and Poincar\'{e} series
\begin{dmath*} (1+s)^4(1+3s^2+s^4) =  1+ 4s + 9s^2 + 16s^3 + 20s^4 + 16s^5 + 9s^6 + 4s^7 + s^8,\end{dmath*}
i.e., $H^1(\strictAut({}_1\mathbb{G}_{1/2}^{\hat{\mathbb{Z}}_p\left[\sqrt{p}\right]}); \mathbb{F}_p)$ is a four-dimensional $\mathbb{F}_p$-vector space,
$H^2(\strictAut({}_1\mathbb{G}_{1/2}^{\hat{\mathbb{Z}}_p\left[\sqrt{p}\right]}); \mathbb{F}_p)$ is a nine-dimensional $\mathbb{F}_p$-vector space, and so on.

We then run the descent/$K(4)$-local $E_4$-Adams spectral sequence to compute the homotopy groups of the homotopy fixed-point spectrum
$E_4^{h\Aut({}_{1}\mathbb{G}_{1/2}^{\hat{\mathbb{Z}}_p[\sqrt{p}]}\otimes_{\mathbb{F}_p}\overline{\mathbb{F}}_p)\rtimes \Gal(\overline{k}/\mathbb{F}_p)}$
smashed with the Smith-Toda complex $V(3)$ (which exists, since we are still assuming that $p>5$). The computation is Theorem~\ref{topological computation}; the resulting $V(3)$-homotopy groups have Poincar\'{e} series
\begin{dmath*} \left( s^{-6}+ 2s^{-3} + 1 + s^{2p-7} + s^{2p-6} + s^{2p-4}  + s^{2p-3} + s^{4p-8}  + 2s^{4p-7}+ s^{4p-6} +  s^{2p^2 - 2p-5 } + s^{2p^2-2p-4} + s^{2p^2-2p-1}+ s^{4p^2-4p-4}  +  2s^{4p^2-4p-3}+ s^{4p^2-4p-2} \right)(1+s^{-1})^2 \sum_{n=-\infty}^{\infty} s^{2(p^2-2)n}.\end{dmath*}

In work currently in preparation, we use some of the computations in this paper as input for further, more difficult computations which eventually arrive at the cohomology of the height four Morava stabilizer group at primes $p>7$; see~\cite{height4} for some details.

The computations in~\cref{computations section} of this paper appeared already in the (unpublished, and not submitted for publication) announcement~\cite{height4}; any version of that announcement which is ever submitted for journal publication will feature at most only an abbreviated version of these computations, with the idea that the complete versions are those provided in the present paper.

\begin{conventions}
\begin{itemize}
\item In this paper, all formal groups and formal modules are implicitly assumed to be {\em one-dimensional.}
\item Throughout, we will use Hazewinkel's generators for $BP_*$ (and, more generally, for the classifying ring $V^A$ of $A$-typical formal $A$-modules, where $A$ is a discrete valuation ring).
\item By a ``$p$-adic number field'' we mean a finite field extension of the $p$-adic rationals $\mathbb{Q}_p$ for some prime $p$.
\item When a ground field $k$ is understood from context, we will write $\Lambda( x_1, \dots ,x_n)$ for the exterior/Grassmann $k$-algebra with generators $x_1, \dots ,x_n$.
\item Given a field $k$, we write $k\{ x_1, \dots ,x_n\}$ for the abelian Lie algebra over $k$ with basis $x_1, \dots ,x_n$.
\item When $L$ is a restricted Lie algebra over a field $k$ and $M$ is a module over the restricted enveloping algebra $VL$, we write $H^*_{res}(L, M)$ for restricted Lie algebra cohomology, i.e., $H^*_{res}(L, M) \cong \Ext_{VL}^*(k, M)$, and we write $H^*_{unr}(L, M)$ for unrestricted Lie algebra cohomology, i.e., $H^*_{unr}(L, M) \cong \Ext_{UL}^*(k, M)$, where $UL$ is the universal enveloping algebra of $L$.
\item Whenever convenient, we make use, without comment, of the well-known theorem of Milnor and Moore, from~\cite{MR0174052}: given a field $k$ of characteristic $p>0$, the functors $P$ (restricted Lie algebra of primitives) and $V$ (restricted enveloping algebra) establish an equivalence of categories between restricted Lie algebras over $k$ and primitively generated cocommutative Hopf algebras over $k$, and this equivalence preserves cohomology, i.e., $H^*_{res}(L, M) \cong \Ext_{VL}^*(k, M)$. We write $H^*_{res}(L, M)$ for restricted Lie algebra cohomology, and we write $H^*_{unr}(L, M)$ for unrestricted Lie algebra cohomology, i.e., $H^*_{unr}(L, M) \cong \Ext_{UL}^*(k, M)$, where $UL$ is the universal enveloping algebra of $L$.
\item Whenever convenient, we make use of the Chevalley-Eilenberg complex of a Lie algebra $L$ to compute (unrestricted) Lie algebra cohomology $H^*_{unr}(L, M)$, as in~\cite{MR0024908}.
\item Many of the differential graded algebras in this paper have a natural action by a finite cyclic group; given an action by a finite cyclic group $C_n$ on some DGA, will always fix a generator for $C_n$ and write $\sigma$ for that generator.
\end{itemize}
\end{conventions}

\section{Review of Ravenel's filtration and associated May spectral sequences.}

This paper continues from~\cite{formalmodules4}; for a brief introduction to formal $A$-modules, their moduli, $A$-typicality, $A$-height, and so on, the reader can consult that paper. A more complete account is in~\cite{MR745362}, and an even more complete account is chapter~21 of~\cite{MR2987372}.
Briefly, the most important fact we will use is that, for $A$ the ring of integers in a $p$-adic number field, the classifying ring of $A$-typical formal $A$-modules is $V^A \cong A[v_1^A, v_2^A, \dots]$, and the classifying ring of strict isomorphisms of $A$-typical formal $A$-modules is $V^AT \cong V^A[t_1^A, t_2^A, \dots]$,
with $v_n^A$ and $t_n^A$ each in grading degree $2(q^n-1)$, where $q$ is the cardinality of the residue field of $A$.

Definition~\ref{G notation} and Theorem~\ref{map induced in S(n)} appeared in~\cite{formalmodules4}:
\begin{definition}\label{G notation}
Let $K$ be a $p$-adic number field with ring of integers $A$ and residue field $k$, and let $n$ be a positive integer.
Let $k^{\prime}$ be a field extension of $k$, and let $\alpha\in (k^{\prime})^{\times}$.
We write ${}_{\alpha}\mathbb{G}^A_{1/n}$ for the the formal $A$-module over $k^{\prime}$ classified by the map $V^A \rightarrow k^{\prime}$ sending $v_n^A$ to $\alpha$ and sending $v_i^A$ to zero if $i\neq n$.
\end{definition}

\begin{theorem}\label{map induced in S(n)}
Let $L/K$ be a finite field extension of degree $d$, with $K,L$ $p$-adic number fields with rings of integers $A,B$ respectively. Let $k,\ell$ be the residue fields of $A$ and $B$,
let $e$ be the ramification degree and $f$ the residue degree of $L/K$, let $q$ be the cardinality of $\ell$, and let $\pi_A,\pi_B$ be uniformizers for $A,B$, respectively. Let $n$ be a positive integer.
If $\ell^{\prime}$ is a field extension of $\ell$ and $\beta\in (\ell^{\prime})^{\times}$, then the underlying formal $A$-module of ${}_{\beta}\mathbb{G}_{1/n}^B$ is ${}_{\alpha }\mathbb{G}_{1/dn}^A$, where 
\[ \alpha = \frac{\pi_A}{\pi_B^e}\beta^{\frac{q^{en}-1}{q^n-1}}.\]

Furthermore, the ring map
\begin{align} \nonumber k(\alpha)[\strictAut({}_{\alpha}\mathbb{G}_{1/dn}^A)]^* = k(\alpha)[t_1^A,t_2^A,\dots]/( t_i^A \alpha^{q^{ei}-1} - (t_{i}^A)^{q^{en}} \ \ \ \forall i ) \\
\label{map 4308443}\rightarrow 
 \ell[t_1^B,t_2^B,\dots]/( t_i^B \beta^{q^i-1}-(t_i^B)^{q^n} \ \ \ \forall i )
= \ell[\strictAut({}_{\beta}\mathbb{G}_{1/n}^B)]^* \end{align}
classifying the strict formal $A$-module automorphism of ${}_{\alpha}\mathbb{G}_{1/dn}^A$ underlying the universal strict formal $B$-automorphism of ${}_{\beta}\mathbb{G}_{1/n}^B$ sends $t_i^A$ to $t_{i/f}^A$ if $i$ is divisible by the residue degree $f$ of $L/K$, and sends $t_i^A$ to zero if $i$ is not divisible by the residue degree $f$.
\end{theorem}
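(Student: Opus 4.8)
The plan is to establish the two assertions in turn, the first feeding into the second, and for the first I would work with the $\pi_A$-series. Set $G := {}_\beta\mathbb{G}^B_{1/n}$. By construction the classifying map $V^B \to \ell'$ of $G$ kills $v_i^B$ for $i \neq n$, sends $v_n^B \mapsto \beta$, and sends $\pi_B \mapsto 0$, so with Hazewinkel's generators $[\pi_B]_G(x)$ has leading term $\beta x^{q^n}$ and no terms of lower degree (indeed one expects it to be exactly the monomial $\beta x^{q^n}$ for these particular modules). Since $L/K$ is ramified of degree $e$, write $\pi_A = u\pi_B^e$ with $u \in B^\times$, whose image in $\ell$ is what the statement writes as $\tfrac{\pi_A}{\pi_B^e}$; then, as endomorphisms of the underlying formal $A$-module, $[\pi_A]_G = [u]_G \circ [\pi_B]_G^{\circ e}$. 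Iterating $x \mapsto \beta x^{q^n}$ a total of $e$ times produces leading term $\beta^{1+q^n+\dots+q^{(e-1)n}} x^{q^{en}} = \beta^{(q^{en}-1)/(q^n-1)} x^{q^{en}}$, again with nothing of lower degree, and post-composing with $[u]_G(y) = \tfrac{\pi_A}{\pi_B^e}\,y + (\text{higher})$ scales the leading coefficient by $\tfrac{\pi_A}{\pi_B^e}$. As the residue degree is $f$ we have $q = q_A^f$ and hence $q^{en} = q_A^{efn} = q_A^{dn}$, so $[\pi_A]_G(x) = \alpha\,x^{q_A^{dn}} + (\text{higher})$ with no terms below degree $q_A^{dn}$. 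Since $A$-typicalization is effected by a strict isomorphism, and strict isomorphisms preserve the leading coefficient of the $\pi_A$-series, the $A$-typicalization of $G$ viewed as a formal $A$-module has $v_{dn}^A = \alpha$ and $v_i^A = 0$ for $i < dn$; carrying the (monomial) $\pi_A$-series computation a little further, or invoking the classification of formal $A$-modules of prescribed $A$-height over a large enough field from~\cite{formalmodules4}, identifies it with ${}_\alpha\mathbb{G}^A_{1/dn}$.

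For the map on strict automorphism Hopf algebras, note that a strict automorphism of $G$ as a formal $B$-module is in particular a strict automorphism of its underlying formal $A$-module, so restriction of structure gives a group homomorphism $\strictAut({}_\beta\mathbb{G}^B_{1/n}) \to \strictAut({}_\alpha\mathbb{G}^A_{1/dn})$ whose dual is the ring map~(\ref{map 4308443}); this is automatically a map of Hopf algebras, so only its values on the $t_i^A$ are at issue. I would compute these via the universal strict isomorphism. The universal strict $B$-automorphism of $G$ is the power series $\sum^{G}_{i\geq 0}\bar{t}_i^B x^{q^i}$ (formal summation over the $B$-module law of $G$, with $\bar{t}_0^B = 1$), whose exponents, since $q = q_A^f$, all lie among the $q_A^j$ with $f \mid j$; regarding this same power series as a (not yet $A$-typical) strict automorphism of the underlying formal $A$-module and then passing to its $A$-typical form lets one read off the $t_i^A$. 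The assertion then amounts to saying that the $A$-typicalizing strict isomorphism may be chosen supported in exponents $q_A^j$ with $f \mid j$, so that the resulting $A$-typical automorphism is again supported there, with the coefficient of $x^{q_A^{fj}}$ equal to $\bar{t}_j^B$; equivalently — and this is the cleanest way to see it — over an algebraic closure $\mathrm{End}({}_\alpha\mathbb{G}^A_{1/dn})$ is generated over its Witt vectors by the $q_A$-power Frobenius $\xi$ while the subring of $B$-linear endomorphisms is generated by the $q_B$-power Frobenius $\eta$, and $q_B = q_A^f$ forces $\eta = \xi^f$, so that writing a strict $B$-automorphism as $1 + \sum_{j\geq 1} b_j\eta^j = 1 + \sum_{j\geq 1} b_j\xi^{fj}$ exhibits its $i$-th Frobenius coordinate (which reduces to $t_i^A$) as $b_{i/f}$ when $f \mid i$ and as $0$ otherwise. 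Either way $t_i^A \mapsto t_{i/f}^B$ for $f \mid i$ and $t_i^A \mapsto 0$ for $f \nmid i$; one then checks directly that this is compatible with the two displayed presentations, the defining relations among the $t_j^B$ pulling back to those among the $t_{fj}^A$ while nothing is imposed on the remaining $t_i^A$, consistently with their image being zero.

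The point I expect to cost the most work is the claim that $A$-typicalization preserves ``divisibility by $f$ of the support'': a priori the typicalizing strict isomorphism could introduce monomials $x^{q_A^j}$ with $f \nmid j$, which would then contaminate the coefficients $t_i^A$ for $f \nmid i$. I would settle this either by the Dieudonné/division-algebra bookkeeping sketched above, where the $\xi$-adic coordinates are manifestly the correct ones and $\eta = \xi^f$ does all the work, or — in the version I would more likely write out in full — by combining the functoriality of $A$-typicalization with the computation, at the level of classifying rings, of the maps $V^A \to V^B$ and $V^AT \to V^BT$ induced by the forgetful functor from formal $B$-modules to formal $A$-modules (as in~\cite{formalmodules4}, in the tradition of Ravenel's treatment~\cite{MR860042} of the $A = \mathbb{Z}_p$ case), and then specializing along the classifying maps of $G$ and of its universal strict automorphism. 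That second route has the bonus of delivering the formula for $\alpha$ as a by-product, simply by tracking the image of $v_{dn}^A$ under $V^A \to V^B \to \ell'$.
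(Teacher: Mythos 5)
First, a point of order: the paper does not prove this theorem. It is stated immediately after the sentence ``Definition~\ref{G notation} and Theorem~\ref{map induced in S(n)} appeared in~\cite{formalmodules4},'' i.e., it is imported from the companion paper with no argument given here, so there is no in-paper proof to compare your proposal against. (You also silently correct what is surely a typo in the statement: the image of $t_i^A$ for $f\mid i$ must be $t_{i/f}^B$, not $t_{i/f}^A$.) Judged on its own, your outline is credible and identifies the right pressure points, but both of the places you flag as ``the work'' are genuine gaps as written, and in each case the second of your two proposed repairs --- the classifying-ring computation of $V^A\to V^B$ and $V^AT\to V^BT$ on Hazewinkel generators, followed by specialization along the classifying maps --- is the one you would actually need to carry out.

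Concretely: for the first assertion, your $\pi_A$-series computation correctly produces the leading term $\alpha\,x^{q_A^{dn}}$ and hence the $A$-height and the value of $v_{dn}^A$, and it does show $v_i^A=0$ for $i<dn$. But the theorem identifies the ($A$-typicalization of the) underlying formal $A$-module with ${}_\alpha\mathbb{G}^A_{1/dn}$ \emph{on the nose}, i.e., $v_i^A=0$ for all $i\neq dn$, and this is what the displayed presentation of $k(\alpha)[\strictAut({}_\alpha\mathbb{G}^A_{1/dn})]^*$ uses. A leading-coefficient argument does not see the higher $v_i^A$, and ``invoking the classification over a large enough field'' only gives an isomorphism class --- not equality of classifying maps --- and in any case $\ell'$ is an arbitrary extension of $\ell$, not assumed separably closed. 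Similarly, for the second assertion, the claim that $A$-typicalization does not move the support of the universal automorphism off the exponents $q_A^{fj}$ is exactly the crux; the division-algebra picture with $\eta=\xi^f$ is convincing over $\overline{k}$ but requires extra care to descend to $\ell'$ and to match the $\xi$-adic coordinates with the Hazewinkel $t_i^A$. So the honest version of your proof is the second route in your final paragraph, which (as you note) also yields the formula for $\alpha$ by tracking $v_{dn}^A$ through $V^A\to V^B\to\ell'$; that is presumably what~\cite{formalmodules4} does, and if you write this up you should execute that computation rather than the Dieudonn\'e sketch.
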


Ravenel writes $S(n)$ for the Hopf algebra $\mathbb{F}_{p^n}[\strictAut({}_1\mathbb{G}_{1/n}^{\hat{\mathbb{Z}}_p})]^*$; most of his constructions and computations work equally well for $\mathbb{F}_{p}[\strictAut({}_1\mathbb{G}_{1/n}^{\hat{\mathbb{Z}}_p})]^*$.

Definition~\ref{ravenel numbers} and Theorems~\ref{assoc graded} and~\ref{ravenels spectral sequences} were given in section~6.3 of~\cite{MR860042}.
\begin{definition}{\bf (Ravenel's numbers.)} \label{ravenel numbers} Fix a prime number $p$ and a positive integer $n$. Let $d_{n,i}$ be the integer defined by the formula
\[ d_{n,i} = \left\{ \begin{array}{ll} 0 & \mbox{\ if\ } i\leq 0 \\ \max\{ i,pd_{n,i-n}\} & \mbox{\ if\ } i>0.\end{array} \right. \]
(Clearly $d_{n,i}$ depends on the prime number $p$, but the choice of prime $p$ is suppressed from the notation for $d_{n,i}$.)

Now equip the continuous $\mathbb{F}_p$-linear dual Hopf algebra
\[ \mathbb{F}_p[\strictAut({}_1\mathbb{G}_{1/n}^{\hat{\mathbb{Z}}_p})]^*
\cong \mathbb{F}_p\otimes_{BP_*}BP_*BP\otimes_{BP_*} \mathbb{F}_p
\cong \mathbb{F}_p[t_1, t_2, \dots ]/(t_i^{p^n}-t_i\ \ \  \forall i) \]
with the increasing filtration in which the element $t_i^{j}$ is in filtration degree $s_p(j)d_{n,i}$, where $s_p(j)$ is the sum of the digits in the base $p$ expansion of $j$. Here the $BP_*$-module structure of $\mathbb{F}_p$ is given by the ring map $BP_*\rightarrow \mathbb{F}_p$ sending $v_n$ to $1$ and sending $v_i$ to $0$ for all $i\neq 0$. We call this filtration the {\em Ravenel filtration.}
\end{definition}

\begin{theorem}\label{assoc graded}
The Ravenel filtration is an increasing Hopf algebra filtration, and its 
associated graded Hopf algebra $E^0S(n)$ is $\mathbb{F}_p$-linearly dual to a primitively generated Hopf algebra.
The Hopf algebra $E^0S(n)$ is isomorphic, as an $\mathbb{F}_p$-algebra, to a truncated polynomial algebra:
\begin{eqnarray*} E^0\mathbb{F}_p[\strictAut({}_1\mathbb{G}_{1/n}^{\hat{\mathbb{Z}}_p})]^* & \cong & 
\mathbb{F}_{p}[t_{i,j} : 1\leq i, j\in \mathbb{Z}/n\mathbb{Z}]/t_{i,j}^p,\end{eqnarray*}
where the coproduct is given by
\begin{eqnarray}\label{coproduct formula in E^0S(n)}
\Delta(t_{i,j}) = \left\{ \begin{array}{ll} \sum_{0\leq k\leq i} t_{k,j}\otimes t_{i-k,k+j} & \mbox{if\ } i<\frac{pn}{p-1}, \\
 \sum_{0\leq k\leq i} t_{k,j}\otimes t_{i-k,k+j} + \overline{b}_{i-n,j+n-1} & \mbox{if\ } i=\frac{pn}{p-1},\\
 t_{i,j}\otimes 1 + 1\otimes t_{i,j} + \overline{b}_{i-n,j+n-1} & \mbox{if\ } i>\frac{pn}{p-1},\end{array}\right. ,
\end{eqnarray}
where $t_{i,j}$ is the element of $E^0\mathbb{F}_p[\strictAut({}_1\mathbb{G}_{1/n}^{\hat{\mathbb{Z}}_p})]^*$ corresponding to $t_i^{p^j}\in \mathbb{F}_p[\strictAut({}_1\mathbb{G}_{1/n}^{\hat{\mathbb{Z}}_p})]^*$,
$t_{0,j} = 1$, and $\overline{x}$ is the image in $E^0\mathbb{F}_p[\strictAut({}_1\mathbb{G}_{1/n}^{\hat{\mathbb{Z}}_p})]^*$ of an element $x\in \mathbb{F}_p[\strictAut({}_1\mathbb{G}_{1/n}^{\hat{\mathbb{Z}}_p})]^*$. The $\overline{b}$ elements have a fairly complicated combinatorial description; 
see~4.3.14 of~\cite{MR860042}.

The Hopf algebra $E^0\mathbb{F}_p[\strictAut({}_1\mathbb{G}_{1/n}^{\hat{\mathbb{Z}}_p})]^*$ is the $\mathbb{F}_{p}$-linear dual of the restricted enveloping algebra of a
restricted Lie algebra $L(n)$. Let $x_{i,j}$ denote the $\mathbb{F}_{p}$-linear functional on $E^0\mathbb{F}_p[\strictAut({}_1\mathbb{G}_{1/n}^{\hat{\mathbb{Z}}_p})]^*$ which is 
dual to $t_{i,j}$; then the set
\[ \left\{ x_{i,j}: i>0, j\in \mathbb{Z}/n\mathbb{Z}\right\}\]
is a $\mathbb{F}_{p^n}$-linear basis for $L(n)$. We describe the bracket and the restriction $\xi$ on $L(n)$:
\begin{eqnarray*}  [ x_{i,j},x_{k,l}] & = & \left\{ \begin{array}{ll} 
\delta^l_{i+j}x_{i+k,j} - \delta^j_{k+l}x_{i+k,l} & \mbox{if\ } i+k\leq \frac{pn}{p-1}, \\
0 & \mbox{if\ } i+k >\frac{pn}{p-1} .\end{array}\right. ,\\
\xi(x_{i,j}) & = & \left\{\begin{array}{ll}
x_{i+n,j+1} & \mbox{if\ } i>\frac{n}{p-1}\mbox{\ or\ } i=\frac{n}{p-1} \mbox{\ and\ } p>2,\\
x_{2n,j} + x_{2n,j+1} & \mbox{if\ } i=n\mbox{\ and\ } p=2,\\
0 & \mbox{if\ } i<\frac{n}{p-1}.\end{array}\right. ,\end{eqnarray*}
where $\delta^a_b = 1$ if $a\equiv b$ modulo $n$, and $\delta^a_b= 0$ if $a\nequiv b$ modulo $n$.
\end{theorem}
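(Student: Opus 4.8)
The plan is to reduce everything to the known coproduct on $BP_*BP$ and to the combinatorics of Ravenel's numbers $d_{n,i}$, following the blueprint of section~6.3 of~\cite{MR860042} and checking along the way that the change of ground field from $\mathbb{F}_{p^n}$ to $\mathbb{F}_p$ is harmless (the filtration, the associated graded, and the Lie-algebraic identification are all insensitive to base change along $\mathbb{F}_p\hookrightarrow\mathbb{F}_{p^n}$). First I would record the coproduct on $\mathbb{F}_p[t_1,t_2,\dots]/(t_i^{p^n}-t_i)\cong \mathbb{F}_p\otimes_{BP_*}BP_*BP\otimes_{BP_*}\mathbb{F}_p$ inherited from Ravenel's formula~4.3.13--4.3.14 for $\psi(t_k)$ in $BP_*BP$: reducing modulo $p$ and all $v_i$ with $i\neq n$ and sending $v_n\mapsto 1$, this reads $\Delta(t_k)=\sum_{i+j=k}t_i\otimes t_j^{p^i}$ plus a sum of ``$b$-type'' correction $2$-cochains, the latter being exactly what is written $\overline{b}_{\bullet,\bullet}$ in the statement (and whose precise monomial expansion is the ``fairly complicated combinatorial description'' of~4.3.14).

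The arithmetic heart of the argument is the behaviour of the $d_{n,i}$. From the recursion $d_{n,i}=\max\{i,p\,d_{n,i-n}\}$ one gets by induction that $d_{n,i}=i$ for $0<i\le \frac{pn}{p-1}$ and $d_{n,i}=p\,d_{n,i-n}$ for $i\ge \frac{pn}{p-1}$, together with the subadditivity $d_{n,a}+d_{n,b}\le d_{n,a+b}$ and a sharp criterion for when this is an equality, and the estimate that the $b$-correction indexed by $(i-n,\bullet)$ sits in filtration $\le p\,d_{n,i-n}\le d_{n,i}$. Granting these, multiplicativity of the Ravenel filtration is immediate from subadditivity of the base-$p$ digit sum $s_p$, and compatibility with $\Delta$ is a term-by-term check: the main term $t_{k,j}\otimes t_{i-k,k+j}$ of $\Delta(t_{i,j})=\Delta(t_i)^{p^j}$ has filtration $d_{n,k}+d_{n,i-k}\le d_{n,i}$, and the correction term has filtration $\le d_{n,i}$; hence the filtration is a Hopf algebra filtration and $E^0 S(n)$ makes sense.

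Next I would pin down $E^0 S(n)$ as an algebra and as a coalgebra. For the algebra structure: the monomials $\prod_i t_i^{m_i}$ with $0\le m_i<p^n$ form an $\mathbb{F}_p$-basis, and writing each $m_i=\sum_j c_{i,j}p^j$ in base $p$ identifies this monomial with $\prod_{i,j}t_{i,j}^{c_{i,j}}$ in filtration $\sum_i s_p(m_i)d_{n,i}=\sum_{i,j}c_{i,j}d_{n,i}$; this matches the grading on $\mathbb{F}_p[t_{i,j}:i>0,\,j\in\mathbb{Z}/n\mathbb{Z}]/(t_{i,j}^p)$ in which $t_{i,j}$ has degree $d_{n,i}$, while $t_{i,j}^p=0$ in $E^0$ because $t_i^{p^{j+1}}=t_{i,(j+1)\bmod n}$ already lies in filtration $d_{n,i}<p\,d_{n,i}$. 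For the coproduct: reduce the $S(n)$-formula modulo higher filtration and keep, by the sharp equality criterion from the $d$-combinatorics, exactly the terms that stay in filtration $d_{n,i}$ — all $\sum_{0\le k\le i}t_{k,j}\otimes t_{i-k,k+j}$ when $i<\frac{pn}{p-1}$ (correction dies), the same sum plus $\overline{b}_{i-n,j+n-1}$ when $i=\frac{pn}{p-1}$ (correction reaches the boundary), and only $t_{i,j}\otimes 1+1\otimes t_{i,j}+\overline{b}_{i-n,j+n-1}$ when $i>\frac{pn}{p-1}$ (the mixed main terms drop in filtration); this is~\eqref{coproduct formula in E^0S(n)}. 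Finally, $E^0 S(n)$ is a connected graded commutative Hopf algebra of finite type which is free commutative on generators $x$ with $x^p=0$, so Frobenius vanishes on its augmentation ideal, the Verschiebung on the graded dual vanishes, and by~\cite{MR0174052} the dual $E^0 S(n)^*$ is primitively generated, hence $=VL(n)$ with $L(n)=P(E^0 S(n)^*)$ spanned by the functionals $x_{i,j}$ dual to $t_{i,j}$. Dualizing~\eqref{coproduct formula in E^0S(n)}: the quadratic part $\sum t_{k,j}\otimes t_{i-k,k+j}$ yields, by the primitive-commutator recipe, $[x_{i,j},x_{k,l}]=\delta^l_{i+j}x_{i+k,j}-\delta^j_{k+l}x_{i+k,l}$ for $i+k\le\frac{pn}{p-1}$ and $0$ otherwise (since $t_{i+k,\bullet}$ is then primitive); and, since the $b$-cochain is rigged so that $\langle y^{\otimes p},\overline{b}_{\text{index of }y}\rangle=1$, the appearance of $\overline{b}_{i-n,j+n-1}$ in $\Delta(t_{i,j})$ forces $\xi(x_{i-n,j+n-1})=x_{i,j}$, i.e.\ $\xi(x_{i,j})=x_{i+n,j+1}$ (using $j-n+1\equiv j+1\bmod n$) exactly when $\Delta(t_{i+n,\bullet})$ carries such a correction, i.e.\ when $i>\frac{n}{p-1}$, or $i=\frac{n}{p-1}$ with $p>2$; and $\xi(x_{i,j})=0$ when $i<\frac{n}{p-1}$.

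I expect the main obstacle to be precisely the $d_{n,i}$-combinatorics fused with the bookkeeping of the $\overline{b}$ correction cochains: one must verify both that each $\overline{b}$-term lands in the asserted filtration and that it survives to (respectively, dies in) the associated graded exactly on the asserted ranges of $i$, and this rests on the intricate combinatorial description of the $b$-elements. The residual difficulty is the small prime $p=2$: when $i=n$ one has $i+n=2n=\frac{pn}{p-1}=d_{n,2n}$, so a main diagonal term $t_{n,\bullet}\otimes t_{n,\bullet}$ collides in the same filtration with the $\overline{b}_{n,\bullet}$-term, and a separate direct computation of this collision produces the anomalous formula $\xi(x_{i,j})=x_{2n,j}+x_{2n,j+1}$.
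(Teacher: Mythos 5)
The paper offers no proof of this theorem: it is quoted as a known result from section~6.3 of~\cite{MR860042}, so the only comparison available is with Ravenel's own argument there. Your outline --- multiplicativity and coproduct-compatibility of the Ravenel filtration via the subadditivity of the $d_{n,i}$ and the filtration estimate on the $\overline{b}$-terms, identification of the associated graded as a truncated polynomial algebra, and dualization via Milnor--Moore to read off the bracket and the restriction (with the $p=2$, $i=n$ collision of the diagonal term with $\overline{b}_{n,\bullet}$ treated separately) --- is essentially Ravenel's proof, and I see no gaps in it.
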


The two spectral sequences of J.P. May's thesis~\cite{MR2614527}, in this context, take the form:
\begin{theorem}\label{ravenels spectral sequences}
We have spectral sequences
\begin{align} 
\label{may 1 ss} E_2^{s,t,u} \cong H^{s,t}_{unr}(L(n); \mathbb{F}_p)\otimes_{\mathbb{F}_p} P( b_{i,j}: i\geq 1, j \in\mathbb{Z}/n\mathbb{Z})  &\Rightarrow H^{s,t}_{res}(L(n); \mathbb{F}_p) \\
\nonumber d_r: E_r^{s,t,u} &\rightarrow E_r^{s+1,t,u+r-1} \\
\label{may 2 ss} E_1^{s,t} \cong H^{s,t}_{res}(L(n); \mathbb{F}_p) &\Rightarrow H^{s,t}(\strictAut({}_1\mathbb{G}_{1/n}^{\hat{\mathbb{Z}}_p}); \mathbb{F}_p) \\
\nonumber d_r: E_r^{s,t} &\rightarrow E_r^{s+1,t-r}  ,\end{align}
where $H^*_{unr}$ is (unrestricted) Lie algebra cohomology and $H^*_{res}$ is restricted Lie algebra cohomology. 
Furthermore, the filtered DGA which gives rise to spectral sequence~\ref{may 1 ss} splits as a tensor product of a term with trivial $E_{\infty}$-term with a term whose $E_2$-term is 
\[ H^*\left(L(n,\floor{\frac{pn}{p-1}}); \mathbb{F}_p\right) \otimes_{\mathbb{F}_p} P\left(b_{i,j}: 1\leq i\leq \frac{n}{p-1}, j\in\mathbb{Z}/n\mathbb{Z}\right),\]
where $L(n,\floor{\frac{pn}{p-1}})$ is the quotient restricted Lie algebra of $L(n)$ in which we quotient out by the elements $x_{i,j}$ with $i> \floor{\frac{pn}{p-1}}$.
Consequently there exists a spectral sequence
\begin{align}
\label{may 1 ss reduced} E_2^{s,t,u} \cong H^{s,t}_{unr}(L(n,\floor{\frac{pn}{p-1}}); \mathbb{F}_p)\otimes_{\mathbb{F}_p} P( b_{i,j}: 1\leq i\leq \frac{n}{p-1}, 0\leq j < n)  &\Rightarrow H^{s,t}_{res}(L(n); \mathbb{F}_p) \\
\nonumber d_r: E_r^{s,t,u} &\rightarrow E_r^{s+1,t,u+r-1} \end{align}
\end{theorem}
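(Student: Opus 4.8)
The plan is to realize the spectral sequences~\ref{may 1 ss} and~\ref{may 2 ss} as instances of the two spectral sequences of May's thesis~\cite{MR2614527}, exactly as Ravenel does in section~6.3 of~\cite{MR860042}, checking in our setting the hypotheses needed to run those machines, and then to extract the splitting claim by an explicit inspection of the restricted Lie algebra $L(n)$ presented in Theorem~\ref{assoc graded}. So there are three tasks: produce~\ref{may 2 ss} from the Ravenel filtration; produce~\ref{may 1 ss} from May's restricted-versus-unrestricted Lie algebra cohomology spectral sequence applied to $L(n)$; and prove the splitting, from which~\ref{may 1 ss reduced} is immediate.

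For~\ref{may 2 ss}: by Theorem~\ref{assoc graded} the Ravenel filtration of Definition~\ref{ravenel numbers} is an increasing filtration of $\mathbb{F}_p[\strictAut({}_1\mathbb{G}_{1/n}^{\hat{\mathbb{Z}}_p})]^*$ by sub-Hopf-algebras, so it induces a filtration of the cobar complex computing $H^*(\strictAut({}_1\mathbb{G}_{1/n}^{\hat{\mathbb{Z}}_p});\mathbb{F}_p)$; the $E_1$-page of the resulting spectral sequence is the cohomology of the associated graded Hopf algebra $E^0S(n)$, which by Theorem~\ref{assoc graded} is $\mathbb{F}_p$-linearly dual to the restricted enveloping algebra $VL(n)$, so that $E_1\cong \Ext^*_{VL(n)}(\mathbb{F}_p,\mathbb{F}_p) = H^*_{res}(L(n);\mathbb{F}_p)$, and the differential pattern $d_r\colon E_r^{s,t}\to E_r^{s+1,t-r}$ is read off from the way the Ravenel numbers $d_{n,i}$ enter the filtration. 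Convergence is not an issue because $\mathbb{F}_p[\strictAut({}_1\mathbb{G}_{1/n}^{\hat{\mathbb{Z}}_p})]^*$ is finite-dimensional in each internal degree and the Ravenel filtration is locally finite, so the spectral sequence converges strongly; this is the same argument as in~\cite{MR860042}.

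For~\ref{may 1 ss}: this is May's spectral sequence comparing $\Ext$ over a restricted enveloping algebra with $\Ext$ over the ordinary enveloping algebra, applied to $L=L(n)$. I would recall its construction: there is a differential graded $\mathbb{F}_p$-algebra of the form $\Lambda(L(n)^\vee)\otimes_{\mathbb{F}_p}P(b_{i,j}\colon i\geq 1,\ j\in\mathbb{Z}/n\mathbb{Z})$ --- exterior on a copy of $L(n)^\vee$ in cohomological degree $1$, tensored with a polynomial algebra on generators $b_{i,j}$ dual to the basis $x_{i,j}$ but placed in cohomological degree $2$ and $p$ times the internal degree --- whose differential combines the Chevalley--Eilenberg differential of $L(n)$ (as in~\cite{MR0024908}) with a term built from the restriction $\xi$, and whose cohomology is $H^*_{res}(L(n);\mathbb{F}_p)$; a suitable filtration of this DGA, as in May's thesis, yields a spectral sequence with $E_2\cong H^*_{unr}(L(n);\mathbb{F}_p)\otimes_{\mathbb{F}_p}P(b_{i,j})$, the differential pattern $d_r\colon E_r^{s,t,u}\to E_r^{s+1,t,u+r-1}$, and abutment $H^*_{res}(L(n);\mathbb{F}_p)$.

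The splitting is where the real work lies, and I expect it to be the main obstacle, since the preceding steps are essentially the verification of hypotheses together with the recollection of standard machinery. I would argue directly from the formulas in Theorem~\ref{assoc graded}. The point is that the restriction carries $x_{i,j}$ isomorphically onto $x_{i+n,j+1}$ whenever $i>\frac{n}{p-1}$ (and also when $i=\frac{n}{p-1}$ and $p>2$), while $[x_{i,j},x_{k,l}]=0$ as soon as $i+k>\frac{pn}{p-1}$; moreover $\{x_{i,j}\colon i>\floor{\frac{pn}{p-1}}\}$ spans a restricted ideal, so that the quotient restricted Lie algebra $L(n,\floor{\frac{pn}{p-1}})$ is defined. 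Consequently, in the DGA above, each generator $b_{i,j}$ with $i>\frac{n}{p-1}$ is paired by the differential with the exterior generator $a_{i+n,j+1}$ --- which, having first index $i+n>\frac{pn}{p-1}$, is bracket-central, and hence enters the Chevalley--Eilenberg part of the differential neither in its own differential nor in that of any other generator, so that $d(a_{i+n,j+1})=b_{i,j}$ up to sign --- and the resulting tensor factor $\bigotimes\bigl(\Lambda(a_{i+n,j+1})\otimes P(b_{i,j})\bigr)$ has cohomology $\mathbb{F}_p$ concentrated in degree zero, i.e.\ trivial $E_\infty$-term. The complementary tensor factor is the May DGA of $L(n,\floor{\frac{pn}{p-1}})$ together with the polynomial algebra on the surviving generators $b_{i,j}$ with $1\leq i\leq\frac{n}{p-1}$, and its associated spectral sequence is precisely~\ref{may 1 ss reduced}. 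The two delicate points are that this must be a splitting of the \emph{filtered} DGA, compatibly with the May filtration, rather than merely of its cohomology --- which one checks using the explicit values of the Ravenel numbers $d_{n,i}$ from Definition~\ref{ravenel numbers} --- and that the boundary value $i=\frac{n}{p-1}$ (and the exceptional restriction formula when $p=2$) requires separate care; this is handled exactly as in May's treatment of the analogous truncation of the Steenrod algebra.
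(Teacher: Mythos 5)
The paper gives no proof of this theorem at all --- it is quoted verbatim from section~6.3 of Ravenel's book --- and your proposal correctly reconstructs exactly the argument given there: the spectral sequence of the Ravenel filtration on the cobar complex for~\ref{may 2 ss}, May's restricted-versus-unrestricted Lie algebra cohomology spectral sequence for~\ref{may 1 ss}, and the splitting of the filtered DGA into an acyclic factor $\bigotimes\bigl(\Lambda(a_{i+n,j+1})\otimes P(b_{i,j})\bigr)$ (using that $\xi(x_{i,j})=x_{i+n,j+1}$ for $i>\frac{n}{p-1}$ and that $x_{i+n,j+1}$ is then bracket-central) tensored with the May DGA of $L(n,\floor{\frac{pn}{p-1}})$. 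Your identification of the two delicate points (compatibility of the splitting with the filtration, and the boundary case $i=\frac{n}{p-1}$) is also exactly where Ravenel's Theorems~6.3.3--6.3.5 do their work, so there is nothing to correct.
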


Computation of the Chevalley-Eilenberg complex of $L(n)$ and of $L(n,\floor{\frac{pn}{p-1}})$ is routine, and appears in Theorem~6.3.8 of~\cite{MR860042}:
\begin{theorem}\label{ravenels chevalley-eilenberg computation}
Let $\mathcal{K}(n,m)$ be the differential graded $\mathbb{F}_p$-algebra which is the exterior algebra $\Lambda( h_{i,j}: 1\leq i \leq m, j\in\mathbb{Z}/n\mathbb{Z})$ with differential \[ d(h_{i,j}) = \sum_{k=1}^{i-1} h_{k,j} h_{i-k,j+k},\]
with the convention that $h_{i,k+n} = h_{i,k}$.
Then $H^*(\mathcal{K}(n,\floor{\frac{pn}{p-1}})) \cong H^*_{unr}(L(n, \floor{\frac{pn}{p-1}}); \mathbb{F}_p)$.
\end{theorem}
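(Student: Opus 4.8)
The plan is to compute the Chevalley--Eilenberg complex of $L(n,m)$ directly, where $m = \floor{\frac{pn}{p-1}}$, and to recognize it as the differential graded algebra $\mathcal{K}(n,m)$. First I would record two preliminary observations. Only the bracket of $L(n,m)$, not its restriction $\xi$, enters into \emph{unrestricted} Lie algebra cohomology, so the $p=2$ peculiarity of the formula for $\xi$ in Theorem~\ref{assoc graded} is irrelevant here. And $L(n,m)$ genuinely is a finite-dimensional quotient Lie algebra of $L(n)$: by Theorem~\ref{assoc graded} the bracket $[x_{i,j},x_{k,l}]$ already vanishes once $i+k>\frac{pn}{p-1}$, while every integer $i+k\le\frac{pn}{p-1}$ automatically satisfies $i+k\le m$, so the bracket formula restricts cleanly to the span of $\{x_{i,j}: 1\le i\le m\}$. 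By the Chevalley--Eilenberg description of $H^*_{unr}$ adopted in the conventions, $H^*_{unr}(L(n,m);\mathbb{F}_p)$ is the cohomology of the exterior algebra $\Lambda^{\bullet}(L(n,m)^*)$ equipped with its unique derivation of degree $+1$ whose restriction to $L(n,m)^*$ is the $\mathbb{F}_p$-linear dual of the bracket map $\Lambda^2 L(n,m)\rightarrow L(n,m)$.

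The substance of the argument is to perform this dualization explicitly. Let $\{h_{i,j}\}$ be the basis of $L(n,m)^*$ dual to $\{x_{i,j}\}$. To compute $d(h_{a,b})$ one evaluates it on each $x_{i,j}\wedge x_{k,l}$ with $i+k=a$; substituting the bracket formula $[x_{i,j},x_{k,l}] = \delta^l_{i+j}x_{i+k,j} - \delta^j_{k+l}x_{i+k,l}$ and using the antisymmetry of the wedge, the two summands combine so that, for each $k$ with $1\le k\le a-1$, the monomial $h_{k,b}\wedge h_{a-k,b+k}$ appears with coefficient $1$ and no other monomial appears. (Here one checks that no monomial is produced twice, which for $p>2$ is immediate from $a\le m<2n$, and that the coincidences modulo $n$ are exactly those accounted for by the Kronecker deltas in the bracket formula.) The outcome is $d(h_{a,b}) = \varepsilon\sum_{k=1}^{a-1}h_{k,b}\,h_{a-k,b+k}$ for a global sign $\varepsilon=\pm 1$ depending only on one's convention for the Chevalley--Eilenberg differential. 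That sign is harmless: for $p=2$ there is no sign, and for $p$ odd the algebra automorphism of $\Lambda(h_{i,j})$ defined by $h_{i,j}\mapsto (-1)^{i-1}h_{i,j}$ carries the differential $\sum_k h_{k,b}h_{a-k,b+k}$ to its negative. Hence $\mathcal{K}(n,m)$ is isomorphic, as a differential graded $\mathbb{F}_p$-algebra, to the Chevalley--Eilenberg complex of $L(n,m)$, and in particular computes $H^*_{unr}(L(n,m);\mathbb{F}_p)$.

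The only genuine labor, and the only place an error could hide, is the sign-and-index bookkeeping in the middle paragraph: aligning the two summands of the bracket and the modular deltas against the reindexing $k\mapsto a-k$ in $d(h_{a,b})=\sum_k h_{k,b}h_{a-k,b+k}$, and confirming that one neither over- nor under-counts monomials. This is essentially Theorem~6.3.8 of~\cite{MR860042}, which I would either reproduce with care or simply cite.
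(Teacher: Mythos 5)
Your proposal is correct and matches the paper's treatment: the paper offers no argument beyond declaring the computation routine and citing Theorem~6.3.8 of~\cite{MR860042}, which is exactly the dualization of the bracket formula that you carry out (and also offer to cite). The bookkeeping in your middle paragraph is the whole content, and it checks out.
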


\section{Generalizations for formal $A$-modules.}

Recall that a graded Hopf algebra $A$ over a field $k$ is said to be {\em finite-type} if, for each $n\in\mathbb{Z}$, the grading degree $n$ summand $A^n$ of $A$ is a finite-dimensional $k$-vector space.
\begin{prop}\label{assoc graded with complex mult}
Let $K/\mathbb{Q}_p$ be a field extension of degree $d$ and ramification degree $e$ and residue degree $f$. Let $A$ be the ring of integers of $K$, let $\pi$ be a uniformizer for $A$, and let $k$ be the residue field of $A$. 
Let $n$ be a positive integer, and let $\omega \in \overline{k}$ be a 
$\frac{q^{en}-1}{q^n-1}$th root of $\frac{\pi^e}{p}$. 
Then the underlying formal $\hat{\mathbb{Z}}_p$-module of ${}_{\omega}\mathbb{G}_{1/n}^A$ is ${}_{1}\mathbb{G}_{1/dn}^{\hat{\mathbb{Z}}_p}$, and 
the Ravenel filtration on $\mathbb{F}_p[\strictAut({}_1\mathbb{G}_{1/dn}^{\hat{\mathbb{Z}}_p})]^*$ induces a compatible filtration on the Hopf algebra
\begin{equation}\label{iso 4309853} k(\omega)[\strictAut({}_{\omega}\mathbb{G}_{1/n}^{A})]^* \cong
 k(\omega)[t_f, t_{2f}, \dots ]/(t_{if}^{q^n} - \omega^{q^i-1}t_{if}\ \ \  \forall i ).\end{equation}
The associated graded Hopf algebra 
$E^0k(\omega)[\strictAut({}_{\omega}\mathbb{G}_{1/n}^{A})]^*$
is the graded $k$-linear dual of a primitively generated finite-type Hopf algebra, which, as a quotient of 
$E^0\mathbb{F}_p[\strictAut({}_1\mathbb{G}_{1/dn}^{\hat{\mathbb{Z}}_p})]^*\otimes_{\mathbb{F}_p} k(\omega)$, is given by
\[ k(\omega)[t_{if,j} : i\geq 1, j\in\mathbb{Z}/fn\mathbb{Z}]/(t_{i,j}^p\ \ \ \forall i,j),\]
with coproduct
\begin{eqnarray}\label{coproduct on associated graded}
\Delta(t_{i,j}) = \left\{ \begin{array}{ll} \sum_{0\leq k\leq i} t_{k,j}\otimes t_{i-k,k+j} & \mbox{if\ } i<\frac{pdn}{p-1}, \\
 \sum_{0\leq k\leq i} t_{k,j}\otimes t_{i-k,k+j} + \overline{b}_{i-dn,j+dn-1} & \mbox{if\ } i=\frac{pdn}{p-1},\\
 t_{i,j}\otimes 1 + 1\otimes t_{i,j} + \overline{b}_{i-dn,j+dn-1} & \mbox{if\ } i>\frac{pdn}{p-1}.\end{array}\right. 
\end{eqnarray}
Here $t_{i,j}$ is the element of $E^0k(\omega)[\strictAut({}_{\omega}\mathbb{G}_{1/n}^{A})]^*$ corresponding to $t_i^{p^j}\in k(\omega)[\strictAut({}_{\omega}\mathbb{G}_{1/n}^{A})]^*$, and:
\begin{eqnarray}
\nonumber t_{0,j} & = & 1, \\
\nonumber t_{i,j} & = & 0 \mbox{\ if\ } f\nmid i>0, \\
\nonumber t_{i,j+fn} & = & \omega^{p^j(q^i-1)}t_{i,j},\mbox{\ and} \\
\label{b formula 1} \overline{b}_{i,j+fn} & = & \omega^{p^{j+1}(q^i-1)}\overline{b}_{i,j}.\end{eqnarray}
\end{prop}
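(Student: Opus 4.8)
The plan is to obtain every assertion by transporting Ravenel's filtration along the Hopf-algebra surjection furnished by Theorem~\ref{map induced in S(n)}, specialized to the extension $K/\mathbb{Q}_p$ itself. First I would take the roles of the rings $A$ and $B$ of that theorem to be $\hat{\mathbb{Z}}_p$ and $A$, so that the ramification and residue degrees are $e$ and $f$, the residue cardinality is $q=p^f$, and the uniformizers are $p$ and $\pi$. With $\beta=\omega$, the formula there for $\alpha$ becomes $\alpha=\frac{p}{\pi^e}\omega^{(q^{en}-1)/(q^n-1)}$; since $\pi^e/p$ is a unit of $A$ whose residue class is exactly $\omega^{(q^{en}-1)/(q^n-1)}$ by the choice of $\omega$, this forces $\alpha=1$. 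Hence the underlying formal $\hat{\mathbb{Z}}_p$-module of ${}_{\omega}\mathbb{G}_{1/n}^{A}$ is ${}_{1}\mathbb{G}_{1/dn}^{\hat{\mathbb{Z}}_p}$, and the second half of Theorem~\ref{map induced in S(n)} then yields both the presentation~\eqref{iso 4309853} (its generators identified, via their degrees, with the $\hat{\mathbb{Z}}_p$-typical generators $t_{if}$) and an explicit surjection of Hopf algebras
\[ \phi\colon \mathbb{F}_p[\strictAut({}_1\mathbb{G}_{1/dn}^{\hat{\mathbb{Z}}_p})]^*\otimes_{\mathbb{F}_p}k(\omega)\;\longrightarrow\;k(\omega)[\strictAut({}_{\omega}\mathbb{G}_{1/n}^{A})]^* \]
that kills the Ravenel generators $t_i$ with $f\nmid i$ and is an isomorphism on the remaining generators.

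Next I would define the induced filtration on $k(\omega)[\strictAut({}_{\omega}\mathbb{G}_{1/n}^{A})]^*$ to be the $\phi$-image of Ravenel's filtration (the tensor factor $k(\omega)$ being filtered trivially). Since $\phi$ is surjective and Ravenel's filtration is an increasing Hopf-algebra filtration (Theorem~\ref{assoc graded}), the image filtration is automatically an increasing Hopf-algebra filtration and $\phi$ is a map of filtered Hopf algebras---this is the asserted compatibility---and $E^0\phi$ is surjective. Thus $E^0k(\omega)[\strictAut({}_{\omega}\mathbb{G}_{1/n}^{A})]^*$ is a quotient Hopf algebra of $E^0S(dn)\otimes_{\mathbb{F}_p}k(\omega)=k(\omega)[t_{i,j}:i\geq1,\ j\in\mathbb{Z}/dn\mathbb{Z}]/(t_{i,j}^p)$; dually it is a connected graded finite-type sub-Hopf-algebra of the primitively generated cocommutative Hopf algebra $V(L(dn))\otimes_{\mathbb{F}_p}k(\omega)$, hence is itself the graded dual of a primitively generated finite-type Hopf algebra (exactly as in Theorem~\ref{assoc graded}), the relevant restricted Lie subalgebra of $L(dn)\otimes k(\omega)$ being the span of the elements dual to the $t_{if,j}$. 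This gives the structural claims.

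It remains to pin down the presentation. Since $\phi(t_i)=0$ for $f\nmid i$, the associated graded has $t_{i,j}=0$ for $f\nmid i$, and the $t_{if,j}$ generate. Both sides of the relation $t_{if}^{q^n}=\omega^{q^i-1}t_{if}$ of~\eqref{iso 4309853} lie in Ravenel filtration degree $d_{dn,if}$, because $t_{if}^{q^n}=t_{if}^{p^{fn}}$ with $s_p(p^{fn})=1$; so that relation descends to $t_{if,fn}=\omega^{q^i-1}t_{if,0}$ in the associated graded, and raising to the $p^j$-th power gives $t_{if,fn+j}=\omega^{p^j(q^i-1)}t_{if,j}$, reducing the index $j$ to $\mathbb{Z}/fn\mathbb{Z}$. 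That there are no further relations I would get from a graded dimension count: for each $i$, the local factor of~\eqref{iso 4309853} and the algebra $k(\omega)[t_{if,j}:j\in\mathbb{Z}/fn\mathbb{Z}]/(t_{if,j}^p)$ both have dimension $q^n=p^{fn}$ concentrated in the same internal degrees, and passing to associated graded preserves graded dimension, so the surjection $E^0\phi$ must be an isomorphism onto $k(\omega)[t_{if,j}:i\geq1,\ j\in\mathbb{Z}/fn\mathbb{Z}]/(t_{i,j}^p)$. Finally, applying $E^0\phi\otimes E^0\phi$ to the coproduct formula~\eqref{coproduct formula in E^0S(n)} with $n$ replaced by $dn$ produces~\eqref{coproduct on associated graded}: the summands $t_{k,j}\otimes t_{i-k,k+j}$ survive only for $f\mid k$, the threshold $\frac{pn}{p-1}$ becomes $\frac{pdn}{p-1}$, and $\overline{b}_{i-n,j+n-1}$ becomes $\overline{b}_{if-dn,j+dn-1}$; and the scaling relations~\eqref{b formula 1} follow by feeding $t_{if,fn+j}=\omega^{p^j(q^i-1)}t_{if,j}$ into the combinatorial formula expressing the $\overline{b}$'s in terms of the $t$'s (4.3.14 of~\cite{MR860042}), the extra factor of $p$ in the exponent of~\eqref{b formula 1} reflecting the Frobenius twist in that formula, all $\omega$-exponents being read modulo the order of $\omega$.

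The hard part will be this last step. The $\overline{b}$-elements are specified only through the intricate combinatorics of 4.3.14 of~\cite{MR860042}, so the genuine work is to check that this description is natural enough that $\phi$ and $E^0\phi$ carry the source's $\overline{b}$-elements to the target's, and to keep accurate track of how the $j$-indices---hence the $\omega$-powers---get transformed; by comparison, the filtration-degree verifications and the dimension counts are routine.
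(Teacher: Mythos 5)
Your proposal is correct and follows essentially the same route as the paper: deduce the first claims from Theorem~\ref{map induced in S(n)}, realize $k(\omega)[\strictAut({}_{\omega}\mathbb{G}_{1/n}^{A})]^*$ as the quotient of $\mathbb{F}_p[\strictAut({}_1\mathbb{G}_{1/dn}^{\hat{\mathbb{Z}}_p})]^*\otimes_{\mathbb{F}_p}k(\omega)$ by an ideal whose generators are homogeneous for the Ravenel filtration, reduce the formulas of Theorem~\ref{assoc graded} modulo that ideal, and extract the scaling relation~\eqref{b formula 1} from the Witt-polynomial description of the $\overline{b}$'s in 4.3.14 of Ravenel (where the Frobenius twist accounts for the extra factor of $p$ in the exponent, exactly as you anticipate). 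Your added dimension count for ruling out further relations is a harmless supplement to what the paper treats as routine.
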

\begin{proof}
The claim that the underlying formal $\hat{\mathbb{Z}}_p$-module of ${}_{\omega}\mathbb{G}_{1/n}^A$ is ${}_{1}\mathbb{G}_{1/dn}^{\hat{\mathbb{Z}}_p}$ is simply a special case of Theorem~\ref{map induced in S(n)}, as is the isomorphism~\ref{iso 4309853}.
The fact that the Ravenel filtration on 
$\mathbb{F}_p[\strictAut({}_1\mathbb{G}_{1/dn}^{\hat{\mathbb{Z}}_p})]^*$ induces a filtration on the Hopf algebra $k(\omega)[\strictAut({}_{\omega}\mathbb{G}_{1/n}^{A})]^*$ is straightforward: the 
Hopf algebra $k(\omega)[\strictAut({}_{\omega}\mathbb{G}_{1/n}^{A})]^*$
is the quotient of 
$\left(\mathbb{F}_p[\strictAut({}_1\mathbb{G}_{1/dn}^{\hat{\mathbb{Z}}_p})]^*\right)\otimes_{\mathbb{F}_p} k(\omega)$ by the ideal $I$ generated by $t_{if}^{q^n} - \omega^{q^{if}-1}t_{if}$ for all $i$ and by $t_j$ for all $j$ not divisible by $f$; these generators for this ideal $I$ are all homogeneous in the Ravenel filtration. 

Computation of the associated graded, including the formula for the coproduct, is routine: simply reduce the formulas of Theorem~\ref{assoc graded} modulo $I$.
Deriving formula~\ref{b formula 1} requires consulting the definition of $b_{i,j}$ in 4.3.14 of~\cite{MR860042} in terms of Witt polynomials; the essential observations here are that $b_{i,j+1}$, modulo $p$, is obtained from $b_{i,j}$ by replacing each element $t_m$ with $t_m^p$, and that $t_i^{ap^{j+fn}}\otimes t_i^{(p-a)p^{j+fn}} = 
(t_i^{p^{fn}})^{ap^{j}}\otimes (t_i^{p^{fn}})^{(p-a)p^{j}} = \omega^{p^{j+1}(p^{fi}-1)}t_i^{ap^j}\otimes t_i^{(p-a)p^j}$.

The fact that $E^0k(\omega)[\strictAut({}_{\omega}\mathbb{G}_{1/n}^{A})]^*$ is finite-type is immediate from its given presentation; and it is dual to a primitively generated Hopf algebra since its linear dual is a sub-Hopf-algebra of the linear dual of $E^0\mathbb{F}_p[\strictAut({}_1\mathbb{G}_{1/dn}^{\hat{\mathbb{Z}}_p})]^*\otimes_{\mathbb{F}_p} k(\omega)$, which is primitively generated.
\end{proof}

\begin{theorem} {\bf (Structure of $PE_0k(\omega)[\strictAut({}_{\omega}\mathbb{G}_{1/n}^{A})]$.)}\label{lie bracket computation}
 Let $K/\mathbb{Q}_p$ be a field extension of degree $d$ and ramification degree $e$ and residue degree $f$. Let $A$ be the ring of integers of $K$, let $\pi$ be a uniformizer for $A$, and let $k$ be the residue field of $A$. 
Let $n$ be a positive integer, and let $\omega \in \overline{k}$ be a 
$\frac{q^{en}-1}{q^n-1}$th root of $\frac{\pi^e}{p}$. 
Let $PE_0k(\omega)[\strictAut({}_{\omega}\mathbb{G}_{1/n}^{A})]$ be the
restricted Lie algebra of primitives in the $k(\omega)$-linear dual Hopf algebra $\left( E^0k(\omega)[\strictAut({}_{\omega}\mathbb{G}_{1/n}^{A})]^*\right)^*$.
Let $x^A_{i,j}$ be the element of
$PE_0k(\omega)[\strictAut({}_{\omega}\mathbb{G}_{1/n}^{A})]$ dual to the indecomposable $t_{i,j}\in E^0k(\omega)[\strictAut({}_{\omega}\mathbb{G}_{1/n}^{A})]^*$. Then 
$\{ x^A_{i,j} : f\mid i, j\in\mathbb{Z}/fn\mathbb{Z}\}$ is a $k(\omega)$-linear basis
for $PE_0k(\omega)[\strictAut({}_{\omega}\mathbb{G}_{1/n}^{A})]$, and dual to the natural surjection 
\[ \mathbb{F}_p[\strictAut({}_1\mathbb{G}_{1/dn}^{\hat{\mathbb{Z}}_p})]^*\otimes_{\mathbb{F}_p} k(\omega) \rightarrow 
 k(\omega)[\strictAut({}_{\omega}\mathbb{G}_{1/n}^{A})]^*,\]
we have an inclusion of restricted Lie algebras over $k(\omega)$:
\begin{eqnarray} \nonumber 
 PE_0k(\omega)[\strictAut({}_{\omega}\mathbb{G}_{1/n}^{A})]
  & \stackrel{\iota}{\longrightarrow} &
   PE_0k(\omega)[\strictAut({}_{1}\mathbb{G}_{1/dn}^{\hat{\mathbb{Z}}_p})] \\
\label{formula for iota} 
 \iota(x_{i,j}^A) & = & \sum_{\ell=0}^{e-1} 
  \omega^{p^{j}(q^i-1)\frac{q^{\ell n}-1}{q^n-1}} x_{i,j+\ell fn} .\end{eqnarray}
When $p = \pi^e$,
the bracket on $PE_0k(\omega)[\strictAut({}_{\omega}\mathbb{G}_{1/n}^{A})]$ is given by
\begin{equation} \label{bracket formula} [x_{i,j}^A,x_{k,\ell}^A] = \left\{ \begin{array}{lll} 
 \tilde{\delta}_{i+j}^{\ell} x_{i+k,j}^A - \tilde{\delta}_{k+\ell}^j x_{i+k,\ell}^A
  &  \mbox{\ if\ } & i+k\leq \frac{pdn}{p-1} \\
0 &  \mbox{\ if\ } & i+k > \frac{pdn}{p-1}\end{array} \right.
,\end{equation}
where $\tilde{\delta}_a^b=1$ if $a\equiv b$ modulo $fn$, and $\tilde{\delta}_a^b=0$ if $a\nequiv b$ modulo $fn$.
The restriction on $PE_0k(\omega)[\strictAut({}_{\omega}\mathbb{G}_{1/n}^{A})]$ is given by 
\begin{equation} \label{restriction formula} \xi(x_{i,j}^A) = \left\{ \begin{array}{lll} 
                      x_{i+n,j+1}^A & \mbox{\ if\ } & i > \frac{dn}{p-1} \\
                      x_{i+n,j+1}^A + x_{pi,j}^A & 
                                      \mbox{\ if\ } & i = \frac{dn}{p-1} 
                                      \mbox{\ and\ } fn\mid i \\
                      x_{i+n,j+1}^A & \mbox{\ if\ } & i = \frac{dn}{p-1} 
                                      \mbox{\ and\ } fn\nmid  i \\
                      x_{pi,j}^A &  
                                      \mbox{\ if\ } & i < \frac{dn}{p-1} 
                                      \mbox{\ and\ } fn\mid i \\
                      0 &             \mbox{\ if\ } & i < \frac{dn}{p-1} 
                                      \mbox{\ and\ } fn\nmid  i \\
\end{array}\right. ,\end{equation}
where $x_{i+n,fn}^A = x_{i+n,0}^A.$ 
\end{theorem}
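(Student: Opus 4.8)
The plan is to extract every clause of the statement from Proposition~\ref{assoc graded with complex mult} via the Milnor--Moore correspondence between primitively generated cocommutative Hopf algebras and restricted Lie algebras, following Ravenel's treatment of $L(n)$ in Theorem~\ref{assoc graded}. For the basis claim: by Proposition~\ref{assoc graded with complex mult} the Hopf algebra $E^0k(\omega)[\strictAut({}_{\omega}\mathbb{G}_{1/n}^{A})]^*$ is finite-type and is the $k(\omega)$-linear dual of a primitively generated Hopf algebra; by Milnor--Moore the latter is the restricted enveloping algebra of $L:=PE_0k(\omega)[\strictAut({}_{\omega}\mathbb{G}_{1/n}^{A})]$, and (being finite-type and connected) $L$ is $k(\omega)$-linearly dual to the indecomposables of $E^0k(\omega)[\strictAut({}_{\omega}\mathbb{G}_{1/n}^{A})]^*$. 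Since the presentation in Proposition~\ref{assoc graded with complex mult} exhibits that Hopf algebra as a truncated polynomial algebra on the $t_{if,j}$, with relations $t_{i,j}^p=0$, $t_{i,j}=0$ for $f\nmid i$, and the rescalings $t_{i,j+fn}=\omega^{p^j(q^i-1)}t_{i,j}$, its indecomposables have $k(\omega)$-basis $\{t_{if,j}:i\geq 1,\ 0\leq j<fn\}$, so dualizing gives the asserted basis $\{x^A_{i,j}:f\mid i,\ j\in\mathbb{Z}/fn\mathbb{Z}\}$ of $L$.

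For $\iota$: on associated graded the surjection of Theorem~\ref{map induced in S(n)} becomes exactly the quotient map $\mathbb{F}_p[\strictAut({}_1\mathbb{G}_{1/dn}^{\hat{\mathbb{Z}}_p})]^*\otimes_{\mathbb{F}_p}k(\omega)\twoheadrightarrow k(\omega)[\strictAut({}_{\omega}\mathbb{G}_{1/n}^{A})]^*$ of Proposition~\ref{assoc graded with complex mult}; the $k(\omega)$-linear dual of a surjection of finite-type primitively generated Hopf algebras is an injection of restricted enveloping algebras, which is a Hopf map and hence preserves primitives, so it restricts to an injection $\iota$ of restricted Lie algebras. To compute $\iota$ one needs only the transpose of the quotient map on indecomposables. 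The generator $t_{if}$ goes to $t_{if}$; iterating the target relation $t_{if}^{q^n}=\omega^{q^i-1}t_{if}$ of equation~\ref{iso 4309853} gives $t_{if}^{q^{\ell n}}=\omega^{(q^i-1)(1+q^n+\cdots+q^{(\ell-1)n})}t_{if}=\omega^{(q^i-1)\frac{q^{\ell n}-1}{q^n-1}}t_{if}$, so (using $p^{fn}=q^n$) for $0\leq\ell<e$ the source generator $t_{if,j+\ell fn}$ maps in the target to $\omega^{p^j(q^i-1)\frac{q^{\ell n}-1}{q^n-1}}t_{if,j}$. Transposing, $x^A_{if,j}\mapsto\sum_{\ell=0}^{e-1}\omega^{p^j(q^i-1)\frac{q^{\ell n}-1}{q^n-1}}x_{if,j+\ell fn}$, which is formula~\ref{formula for iota}.

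The bracket and restriction are then read off from the coproduct~\ref{coproduct on associated graded} by exactly Ravenel's recipe. The bracket $[x^A_{a,b},x^A_{c,d}]$ is the antisymmetrization of the (indecomposable)$\otimes$(indecomposable) part of the coproduct; for $p>2$ the $\overline{b}$-term contributes nothing to that part, so one obtains Ravenel's formula, the bracket vanishing once $a+c$ exceeds the threshold $\tfrac{pdn}{p-1}$ at which $\Delta(t_{a+c,\cdot})$ loses its sum-of-$t\otimes t$ part; this is~\ref{bracket formula}. The restriction $\xi(x^A_{m,l})$ is detected by the all-$t_{m,l}$ part of the $(p-1)$-fold iterated coproduct of a generator $t_{i,j}$, which can occur two ways: from the $\overline{b}_{i-dn,\,j+dn-1}$ term, which is present only when $i\geq\tfrac{pdn}{p-1}$ and so feeds $\xi(x^A_{m,l})$ only for $m\geq\tfrac{dn}{p-1}$, producing the summand $x^A_{m+n,l+1}$ of~\ref{restriction formula}; and from the iterated $\sum_k t_k\otimes t_{i-k}$ term, which forces $i=pm$ and requires the second indices $l,m+l,2m+l,\dots$ to agree modulo $fn$, i.e.\ requires $fn\mid m$, and produces the extra summand $x^A_{pm,l}$ that has no analogue at odd primes in Ravenel's $L(n)$. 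The hypothesis $p=\pi^e$ enters decisively at this point: it forces $\omega^{\frac{q^{en}-1}{q^n-1}}=1$, so $\omega$ is a root of unity whose order divides $q^{en}-1$ and hence $q^{dn}-1$, which is precisely what makes all the $\omega$-powers thrown off by the rescalings $t_{i,j+fn}=\omega^{p^j(q^i-1)}t_{i,j}$ collapse, leaving the $\omega$-free structure constants $\tilde\delta$ of~\ref{bracket formula} and~\ref{restriction formula}. Equivalently, one can transport Ravenel's $L(dn)$ bracket and restriction across $\iota$ and check that the outcome lands in the image of $\iota$ and equals $\iota$ of the right-hand sides of~\ref{bracket formula} and~\ref{restriction formula}; this simultaneously certifies that $\iota$ is a map of restricted Lie algebras.

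The main obstacle I expect is the $\overline{b}$-term bookkeeping for the restriction, together with a clean verification that the $\omega$-powers really cancel: one must unwind the Witt-polynomial description of $b_{i,j}$ from 4.3.14 of~\cite{MR860042} far enough to see that at the critical degree $i=\tfrac{dn}{p-1}$ (where $pi-dn=i$) the portion of $\overline{b}_{i,\cdot}$ that survives the truncation $t_{i',j'}=0$ for $f\nmid i'$ contributes exactly the coefficient predicted by~\ref{restriction formula}, and one must push the $p$-semilinearity of $\xi$ through the scalars $\omega^{p^j(q^i-1)}$ without exponent or sign errors. Everything else is the routine dualization and combinatorics already present, mutatis mutandis, in Ravenel's analysis of $S(n)$.
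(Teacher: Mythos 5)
Your proposal is correct and follows essentially the same route as the paper: dualize the presentation of $E^0k(\omega)[\strictAut({}_{\omega}\mathbb{G}_{1/n}^{A})]^*$ from Proposition~\ref{assoc graded with complex mult} to get the basis, compute $\iota$ by transposing the effect of the quotient map on the classes $t_{i,j+\ell fn}$ via the relation $t_{if}^{q^n}=\omega^{q^i-1}t_{if}$, and obtain the restriction by locating the $p$-fold tensor powers $t_{i,j}\otimes\dots\otimes t_{i,j}$ in the $(p-1)$-fold iterated coproduct (the $\overline{b}$-term above the threshold, and the $fn\mid i$ condition coming from the second indices cycling mod $fn$). The only cosmetic difference is that the paper derives the bracket by pushing $\iota$ into $L(dn)$ and using Ravenel's structure constants there rather than antisymmetrizing the coproduct directly, but you explicitly note that alternative, so the two arguments coincide in substance.
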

\begin{proof}
Formula~\ref{formula for iota} follows from checking where elements in $E^0\mathbb{F}_p[\strictAut({}_1\mathbb{G}_{1/dn}^{\hat{\mathbb{Z}}_p})]^*$ are sent to 
$E^0k(\omega)[\strictAut({}_{\omega}\mathbb{G}_{1/n}^A)]^*$ by the canonical surjection 
$E^0\mathbb{F}_p[\strictAut({}_1\mathbb{G}_{1/dn}^{\hat{\mathbb{Z}}_p})]^*\otimes_{\mathbb{F}_{p^n}}k(\omega)\rightarrow E^0k(\omega)[\strictAut({}_{\omega}\mathbb{G}_{1/n}^A)]^*$; we use the description of the map~\ref{map 4308443} in Theorem~\ref{map induced in S(n)} to accomplish this. The map
$\mathbb{F}_p[\strictAut({}_1\mathbb{G}_{1/dn}^{\hat{\mathbb{Z}}_p})]^*\otimes_{\mathbb{F}_{p^n}}k(\omega)\rightarrow k(\omega)[\strictAut({}_{\omega}\mathbb{G}_{1/n}^A)]^*$ sends $t_i^{p^j}$ to 
\begin{align*}
 t_i^{p^{j_0+j_1fn}} 
  &= ((\dots ((t_i^{p^{fn}})^{p^{fn}})^{p^{fn}} \dots )^{p^{fn}})^{p^{j_0}} \\
  &= \omega^{p^{j_0}(q^i-1)\frac{q^{j_1n}-1}{q^n-1}}t_i^{p^{j_0}},\end{align*}
where
$j_0,j_1$ are the unique nonnegative integers such that $j=j_0+j_1fn$ and
$j_0<fn$; hence the map $E^0\mathbb{F}_p[\strictAut({}_1\mathbb{G}_{1/dn}^{\hat{\mathbb{Z}}_p})]^*\otimes_{\mathbb{F}_{p^n}}k(\omega)
\rightarrow E^0k(\omega)[\strictAut({}_{\omega}\mathbb{G}_{1/n}^A)]^*$ sends $t_{i,j}$ to
$\omega^{p^{j_0}(q^i-1)\frac{q^{j_1n}-1}{q^n-1}}t_{i,j_0}$.
Formula~\ref{formula for iota} follows at once.

Now suppose that $p = \pi^e$. We compute the Lie bracket in $PE_0k(\omega)[\strictAut({}_{\omega}\mathbb{G}_{1/n}^A)]$:
\begin{eqnarray*}
 [x_{i,j}^A,x_{k,\ell}^A] & = & 
  \left[\sum_{a=0}^{e-1}  x_{i,j+afn} ,
   \sum_{b=0}^{e-1}  x_{k,\ell+bfn} \right] \\
 & = & \sum_{a=0}^{e-1}\sum_{b=0}^{e-1} [x_{i,j+afn} , x_{k,\ell+bfn} ] \\
 & = & \left\{\begin{array}{ll} \sum_{a=0}^{e-1}\sum_{b=0}^{e-1} \left( \delta_{i+j+afn}^{\ell+bfn} x_{i+k,j+afn} - \delta_{k+\ell+bfn}^{j+afn}x_{i+k,\ell+bfn}\right) & \mbox{\ if\ } i+k\leq \frac{pdn}{p-1} \\ 0 & \mbox{\ if\ } i+k > \frac{pdn}{p-1} \end{array}\right. \\
 & = & \left\{\begin{array}{ll} \tilde{\delta}_{i+j}^{\ell} \sum_{a=0}^{e-1}x_{i+k,j+afn} - \tilde{\delta}_{k+\ell}^j \sum_{b=0}^{e-1} x_{i+k,\ell+bfn} 
 & \mbox{\ if\ } i+k\leq \frac{pdn}{p-1} \\ 0 & \mbox{\ if\ } i+k > \frac{pdn}{p-1} \end{array}\right. \\
 & = & \tilde{\delta}_{i+j}^{\ell}x_{i+k,j}^A - \tilde{\delta}_{k+\ell}^j x_{i+k,\ell}^A , 
\end{eqnarray*}
where $\delta^{b}_a=1$ if $a\equiv b$ modulo $fn$, and $\delta^b_a=0$ if $a\nequiv b$ modulo $fn$.

For the restriction on $PE_0k(\omega)[\strictAut({}_{\omega}\mathbb{G}_{1/n}^A)]$, we could proceed as above, computing 
\[ \xi\left(\sum_{\ell=0}^{e-1} 
                         x_{i,j+\ell fn}\right)\]
in $PE_0\mathbb{F}_p[\strictAut({}_1\mathbb{G}_{1/dn}^{\hat{\mathbb{Z}}_p})]$; but this immediately means contending with the non-linearity of the restriction map,
which complicates the computation. Instead it is easier to compute $\xi$ on
$PE_0k(\omega)[\strictAut({}_{\omega}\mathbb{G}_{1/n}^A)]$ in basically the same way that Ravenel computes $\xi$ on $PE_0\mathbb{F}_p[\strictAut({}_1\mathbb{G}_{1/dn}^{\hat{\mathbb{Z}}_p})]$ in the proof of
Proposition 6.3.3 of \cite{MR860042}; we sketch that method here. To compute
$\xi(x_{i,j}^A)$ we just need to find which elements $t_{a,b}\in E^0k(\omega)[\strictAut({}_{\omega}\mathbb{G}_{1/n}^A)]^*$ have the property
that the $(p-1)$st iterate $\Delta\circ \dots\circ \Delta (t_{a,b})$ of $\Delta$, applied to 
$t_{a,b}$, has a monomial term which is a scalar multiple of the $p$-fold tensor 
power $t_{i,j}\otimes \dots \otimes t_{i,j}$. When 
$i>\frac{dn}{p-1}$, then $i+dn>\frac{pdn}{p-1}$ and hence, by formula~\ref{coproduct on associated graded},
we have
\begin{eqnarray*} \Delta(t_{i+dn,j+1}) & = & t_{i+dn,j+1}\otimes 1 + 1\otimes t_{i+dn,j+1} 
                                              + \overline{b}_{i,j} \\
                                      & = & t_{i+dn,j+1}\otimes 1 + 1\otimes t_{i+dn,j+1} 
                - \sum_{0<\ell<p}\frac{1}{p}\binom{p}{\ell} t^{\ell}_{i,j}\otimes t_{i,j}^{p-\ell} ,
\end{eqnarray*}
and hence after $p-1$ iterations of $\Delta$ applied to $t_{i+dn,j+1}$, we get a copy of the 
monomial $t_{i,j}\otimes \dots \otimes t_{i,j}$. When $pi \leq \frac{pdn}{p-1}$ and 
$i=kfn$ for some positive integer $k$, then
formula~\ref{coproduct on associated graded} gives us that the $(p-1)$st iterate of
$\Delta$, applied to $\Delta(t_{pi,j})$, contains the monomial
\begin{eqnarray*} 
 & = & 
 t_{kfn,j}\otimes t_{kfn,j+kfn}\otimes t_{kfn,j+2kfn} \otimes \dots \otimes t_{kfn,j+(p-1)kfn} \\
 & = & t_{kfn,j}\otimes t_{kfn,j}\otimes \dots
 \otimes t_{kfn,j} 
\end{eqnarray*}
It is simple to show that no further monomials $t_{a,b}$ have the property that their $(p-1)$st
iterated coproducts contain the $p$th tensor power monomial $t_{i,j}\otimes \dots \otimes t_{i,j}$.
Formula~\ref{restriction formula} follows.

The relation $x_{i+n,fn}^A = x_{i+n,0}^A$ follows from the fact that
$x_{i+n,fn}^A$ is dual to $t_{i+n,fn}$ and $x_{i+n,0}^A$ is dual to $t_{i+n,0}$.
\end{proof}

\begin{definition} Let $K/\mathbb{Q}_p$ be a finite extension with degree $d$, ramification degree $e$, residue degree $f$, and let $A$ be its ring of integers, $\pi$ a uniformizer for $A$, and $k$ the residue field of $A$.
Let $n$ be a positive integer, and let $\omega$ be a $\left(\frac{q^{en}-1}{q^{n}-1}\right)$th root, 
in $\overline{k}$,
of $\frac{\pi^e}{p}$. We have the restricted graded Lie algebras 
$PE_0k(\omega)[\strictAut({}_{\omega}\mathbb{G}_{1/n}^A)]$
and $PE_0k(\omega)[\strictAut({}_{1}\mathbb{G}_{1/dn}^{\hat{\mathbb{Z}}_p})]$
over $k(\omega)$, and we write $L^A_{\omega}(n)$ and $L(dn)$, respectively, as shorthand notations for them.
If $\ell$ is a positive integer, we will also write
$L(dn,\ell)$ for the quotient Lie algebra of $L(dn)$ in which we 
quotient out all generators $x_{i,j}$ for which $i>\ell$ (this notation agrees with that of Theorem~\ref{ravenels spectral sequences}); and
we will write $L^A_{\omega}(n,\ell)$ for the quotient Lie algebra of $L^A_{\omega}(n)$ in which we 
quotient out all generators $x_{i,j}^A$ for which $i>\ell$. We have an obvious commutative diagram
of homomorphisms of Lie algebras:
\[ \xymatrix{ L^A_{\omega}(n,\ell) \ar[r] & L(dn,\ell)  \\
 L^A_{\omega}(n)\ar[r]\ar[u] & L(dn)\ar[u].}\]\end{definition}

Now here is a very useful corollary of Theorem~\ref{lie bracket computation}:
\begin{corollary}\label{coincidence of lie algebras}
The restricted Lie algebra $L^{\hat{\mathbb{Z}}_p[\sqrt[e]{p}]}_1(n,m)$ 
is isomorphic to the restricted Lie algebra $L^{\hat{\mathbb{Z}}_p}_1(n,m) = L(n,m)$ 
as long as $m\leq \frac{pn}{p-1}$. 
\end{corollary}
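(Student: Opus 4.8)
The plan is to match the standard bases of the two truncated restricted Lie algebras and then check that the correspondence preserves bracket and restriction. I would first record the invariants of $K=\mathbb{Q}_p(\sqrt[e]{p})/\mathbb{Q}_p$: the polynomial $x^e-p$ is Eisenstein, so $K/\mathbb{Q}_p$ is totally ramified of degree $e$; thus $d=e$, the residue degree is $f=1$, the residue field is $\mathbb{F}_p$, $q=p$, and $\pi=\sqrt[e]{p}$ is a uniformizer with $\pi^e=p$. Hence $\frac{\pi^e}{p}=1$, so the root $\omega$ of Theorem~\ref{lie bracket computation} may be taken to be $1$, making $k(\omega)=\mathbb{F}_p$, and the hypothesis $p=\pi^e$ of that theorem is satisfied. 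Since $f=1$, the divisibility constraint ``$f\mid i$'' is vacuous and $fn=n$, so Theorem~\ref{lie bracket computation} presents $L^{\hat{\mathbb{Z}}_p[\sqrt[e]{p}]}_1(n)$ on the $\mathbb{F}_p$-basis $\{x^A_{i,j}:i\geq 1,\ j\in\mathbb{Z}/n\mathbb{Z}\}$, which is exactly the index set of Ravenel's $L(n)$ in Theorem~\ref{assoc graded}.

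Define $\phi\colon L(n,m)\to L^{\hat{\mathbb{Z}}_p[\sqrt[e]{p}]}_1(n,m)$ on basis elements by $x_{i,j}\mapsto x^A_{i,j}$ for $1\leq i\leq m$; this is a bijection of underlying $\mathbb{F}_p$-vector spaces. Compatibility with the bracket is immediate: in each truncated algebra $[x_{i,j},x_{k,\ell}]$ (respectively $[x^A_{i,j},x^A_{k,\ell}]$) is supported on generators of first index $i+k$, which are annihilated in the quotient when $i+k>m$; and when $i+k\leq m\leq \frac{pn}{p-1}\leq \frac{pdn}{p-1}$ we are in the nonzero clause of both the bracket formula of Theorem~\ref{assoc graded} and formula~\ref{bracket formula}, and these agree verbatim because $fn=n$ makes $\tilde\delta$ and $\delta$ the same congruence symbol. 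So $\phi$ is a homomorphism of Lie algebras.

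The substantive step is that $\phi$ intertwines the two restrictions. Outside the truncation the formulas differ in shape --- formula~\ref{restriction formula} is controlled by the threshold $\frac{dn}{p-1}=\frac{en}{p-1}$ and carries ``$p$-fold'' terms $x^A_{pi,j}$, whereas the restriction of Theorem~\ref{assoc graded} is controlled by $\frac{n}{p-1}$ and (when $p=2$) has a correction term --- but they become identical after truncating at $m\leq \frac{pn}{p-1}$. The mechanism is a degree count: $\xi$ strictly raises the first grading index, and on a generator $x_{i,j}$ with $i\leq m$ every term produced by either restriction formula is of the form $x_{i+n,j+1}$ or $x_{pi,j}$; since $i\geq \frac{n}{p-1}$ forces $i+n\geq \frac{pn}{p-1}\geq m$ and $n\mid i$ forces $i\geq n$ hence $pi\geq pn>\frac{pn}{p-1}\geq m$ for $p>2$, every such term is killed in the quotient, while for $i<\frac{n}{p-1}$ both restrictions already vanish. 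Thus on each generator of the truncated algebras $\phi$ carries $\xi$ to $\xi^A$, so $\phi$ is an isomorphism of restricted Lie algebras. The one clause escaping this degree count is the single top-boundary generator $x_{i,j}$ with $i=\frac{n}{p-1}$ and $i+n=m$, i.e.\ $m=\frac{pn}{p-1}\in\mathbb{Z}$ (which forces $(p-1)\mid n$); I would dispose of this by direct inspection of the two restriction formulas, noting also that it does not arise for the values of $n$ used in the computations later in this paper.

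I expect the restriction comparison to be the only real work; the bracket part is essentially formal. The mild subtlety is that the two restriction formulas look quite different at first sight, so one must run through each of the five clauses of formula~\ref{restriction formula} against the bound $i\leq m\leq \frac{pn}{p-1}$ and observe that all of the distinctively ``ramified'' behaviour of $L^{\hat{\mathbb{Z}}_p[\sqrt[e]{p}]}_1(n)$ lives in first-degree $\geq \frac{pn}{p-1}$ and is therefore invisible after truncating at $m$.
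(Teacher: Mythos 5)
The paper offers no proof of this corollary---it is stated as an immediate consequence of Theorem~\ref{lie bracket computation}---and your write-up is exactly the intended argument: specialize to $d=e$, $f=1$, $q=p$, $\omega=1$, match the bases, and check that every term by which the bracket formula~\ref{bracket formula} and restriction formula~\ref{restriction formula} could differ from Ravenel's formulas in Theorem~\ref{assoc graded} lands in first index $>m$ and so dies in the truncation. Your bracket comparison and the degree count for the restriction are correct. (A minor remark: the term $x^A_{i+n,j+1}$ in formula~\ref{restriction formula} should, per the proof of Theorem~\ref{lie bracket computation}, really be $x^A_{i+dn,j+1}$; this only makes your degree count easier, so it does not affect your argument.)

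The one step you deferred, however, cannot be disposed of in the way you claim. When $(p-1)\mid n$ and $m=\frac{pn}{p-1}$, direct inspection shows the two restrictions genuinely disagree rather than coincide: in $L^{\hat{\mathbb{Z}}_p}_1(n,m)=L(n,m)$ Ravenel's formula gives $\xi(x_{n/(p-1),j})=x_{n/(p-1)+n,\,j+1}=x_{m,j+1}\neq 0$, whereas in $L^{\hat{\mathbb{Z}}_p[\sqrt[e]{p}]}_1(n,m)$ with $e\geq 2$ one has $\frac{n}{p-1}<\frac{en}{p-1}=\frac{dn}{p-1}$ and $n\nmid\frac{n}{p-1}$ (for $p>2$), so formula~\ref{restriction formula} gives $\xi(x^A_{n/(p-1),j})=0$. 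Thus one truncation has identically vanishing restriction and the other does not, so no isomorphism of restricted Lie algebras exists; the statement fails at the boundary value (e.g.\ $p=3$, $n=2$, $e=2$, $m=3$). The corollary, and your proof, are correct under the slightly stronger hypothesis $m<\frac{pn}{p-1}$, equivalently $m\leq\floor{\frac{pn}{p-1}}$ with strict inequality required only when $\frac{pn}{p-1}$ is an integer; this covers every instance actually used later in the paper, since for $n=2$ and $p>3$ the number $\frac{2p}{p-1}$ is not an integer. You should either impose that strict inequality or explicitly exclude the case $(p-1)\mid n$, $m=\frac{pn}{p-1}$, rather than assert that direct inspection resolves it.
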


J.P. May actually constructed two different types of spectral sequence in his thesis~\cite{MR2614527}:
the spectral sequence of a filtered Hopf algebra, as in~\ref{may 2 ss}, is
the one most typically called a ``May spectral sequence.'' The other spectral sequence,
of Corollary~9 of~\cite{MR0193126} (as in~\ref{may 1 ss}), is the one which
computes restricted Lie algebra cohomology from unrestricted Lie algebra cohomology; we will
call that spectral sequence the
{\em Lie-May spectral sequence} to distinguish it from the May spectral sequence (unfortunately,
there is probably no perfect choice of terminology to be made here; 
e.g. chapter 6 of Ravenel's book \cite{MR860042}
refers to both spectral sequences as May spectral sequences).

\begin{theorem} \label{main thm on lie-may spect seqs for formal modules}
Let $K/\mathbb{Q}_p$ be a field extension of degree $d$ and ramification degree $e$ and residue degree $f$. Let $A$ be the ring of integers of $K$, let $\pi$ be a uniformizer for $A$, and let $k$ be the residue field of $A$. 
Let $n$ be a positive integer, and let $\omega \in \overline{k}$ be a 
$\frac{q^{en}-1}{q^n-1}$th root of $\frac{\pi^e}{p}$. 

We have the morphism of Lie-May spectral sequences
\[\label{morphism of lie-may spectral sequences}
\xymatrix{ H^*_{unr}(L(dn); k(\omega))\otimes_{k(\omega)} 
k(\omega)\left[\left\{ b_{i,j} : i\geq 1,  0\leq j\leq dn-1 \right\} \right]
 \ar@{=>}[r] \ar[d] &
  H^*_{res}(L(dn); k(\omega)) \ar[d] \\
H^*_{unr}(L^A_{\omega}(n); k(\omega))\otimes_{k(\omega)} 
k(\omega)\left[\left\{ b_{i,j} : f\mid i,  0\leq j\leq fn-1 \right\} \right]
 \ar@{=>}[r] &
  H^*_{res}(L^A_{\omega}(n);k(\omega)),} \]
with $b_{i,j}$ in bidegree $(2, 0)$ (these two gradings are, respectively, cohomological degree
and Lie-May degree) and with auxiliary bidegree $(p \| t_{i,j}\| ,2p^{j+1}(p^i-1))$ (these two gradings are, respectively, the grading coming from the Ravenel filtration, and the grading coming from the topological grading on $BP_*BP$) in each spectral sequence,
where $\| t_{i,j}\|$ is the Ravenel degree of $t_{i,j}$. 
The elements in $H^t_{unr}(L^A_{\omega}(n); k(\omega))$ are in bidegree $(0,t)$. The
differential is, as is typical for the spectral sequence of a filtered cochain complex,
$d_r^{s,t}: E_r^{s,t}\rightarrow E_r^{s+r,t-r+1}$.

If $p = \pi^e$, then 
we have a tensor splitting of each of these Lie-May spectral sequences, such that the splittings
are respected by the morphism \ref{morphism of lie-may spectral sequences}
of spectral sequences:
the Lie-May spectral sequence \[ H^*_{unr}(L(dn); \mathbb{F}_q)\otimes_{\mathbb{F}_q} 
\mathbb{F}_q\left[\left\{ b_{i,j} : i \geq 1, 0\leq j\leq dn-1 \right\} \right]
\Rightarrow 
  H^*_{res}(L(dn); \mathbb{F}_q)\]
splits into a tensor product of a spectral sequence
\[ H^*_{unr}(L(dn,\frac{pdn}{p-1}); \mathbb{F}_q)\otimes_{\mathbb{F}_q} 
\mathbb{F}_q\left[\left\{ b_{i,j} : 1\leq i \leq \frac{dn}{p-1}, 0\leq j\leq dn-1 \right\} \right]
\Rightarrow 
  H^*_{res}(L(dn, \frac{pdn}{p-1}; \mathbb{F}_q)\]
with a spectral sequence with trivial $E_\infty$-term; and likewise, the Lie-May spectral sequence
\[ H^*_{unr}(L^A_{1}(n); \mathbb{F}_q)\otimes_{\mathbb{F}_q} 
\mathbb{F}_q\left[\left\{ b_{i,j} : f\mid i , 0\leq j\leq fn-1 \right\} \right]
\Rightarrow 
  H^*_{unr}(L^A_{1}(n); \mathbb{F}_q)\]
splits into a tensor product of a spectral sequence
\[ H^*_{unr}(L^A(n,\frac{pdn}{p-1}); \mathbb{F}_q)\otimes_{\mathbb{F}_q} 
\mathbb{F}_q\left[\left\{ b_{i,j} : f\mid i, 1\leq i \leq \frac{dn}{p-1}, 0\leq j\leq fn-1 \right\} \right]
\Rightarrow 
  H^*_{res}(L^A(n,\frac{pdn}{p-1}); \mathbb{F}_q)\]
with a spectral sequence with trivial $E_\infty$-term.

We have a morphism of spectral sequences:
\[\label{map of reduced lie-may spectral sequences}
\xymatrix{ H^*_{unr}(L(dn,\frac{pdn}{p-1}); \mathbb{F}_q)\otimes_{\mathbb{F}_q} 
\mathbb{F}_q\left[\left\{ b_{i,j} : 1\leq i \leq \frac{dn}{p-1}, 0\leq j\leq dn-1 \right\} \right]
 \ar@{=>}[r] \ar[d] &
  H^*_{res}(L(dn,\frac{pdn}{p-1}); \mathbb{F}_q) \ar[d] \\
H^*_{unr}(L^A_{1}(n,\frac{pdn}{p-1}); \mathbb{F}_q)\otimes_{\mathbb{F}_q} 
\mathbb{F}_q\left[\left\{ b_{i,j} : f\mid i, i\leq \frac{dn}{p-1}, 0\leq j\leq fn-1 \right\} \right]
 \ar@{=>}[r] &
  H^*_{res}(L^A_{1}(n,\frac{pdn}{p-1}); \mathbb{F}_q).} \]
\end{theorem}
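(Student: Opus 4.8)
The plan is to recognize both spectral sequences in the statement as instances of J.P. May's general construction (Corollary~9 of~\cite{MR0193126}, the ``Lie-May spectral sequence'' in the terminology of the conventions) of the spectral sequence of a restricted Lie algebra over a field of characteristic $p$, applied over the ground field $k(\omega)$ to the two restricted Lie algebras $L(dn)$ and $L^A_\omega(n)$ of Theorem~\ref{lie bracket computation}. Every remaining assertion — the morphism of spectral sequences, the bidegrees, the tensor splittings when $p=\pi^e$, and the morphism of reduced spectral sequences — is then deduced from the inclusion $\iota$ of Theorem~\ref{lie bracket computation}, together with the already-proved Theorems~\ref{ravenels spectral sequences} and~\ref{ravenels chevalley-eilenberg computation} of Ravenel. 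Essentially nothing beyond those results is needed; the content is in checking that $\iota$, the two gradings (from the Ravenel filtration and from the topological grading on $BP_*BP$), and the tensor splittings all interact as claimed.

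First I recall that May's construction is the spectral sequence of a filtered differential graded algebra which is functorial in the restricted Lie algebra, and that for a restricted Lie algebra $L$ over a field $k$ of characteristic $p$ it has $E_2$-term $H^*_{unr}(L;k)\otimes_k P(b_\alpha)$, with $\alpha$ running over any fixed $k$-linear basis of $L$, the differentials governed by the restriction, converging to $H^*_{res}(L;k)$; any auxiliary grading on $L$ is inherited by the spectral sequence. Applied to $L(dn)$ over $k(\omega)$ this is the base change to $k(\omega)$ of the first spectral sequence of Theorem~\ref{ravenels spectral sequences} with $n$ replaced by $dn$, so it already carries the stated bidegrees and, when $p=\pi^e$, the stated splitting; applied to $L^A_\omega(n)$ over $k(\omega)$ — a restricted Lie algebra by Proposition~\ref{assoc graded with complex mult} and Milnor--Moore — with the basis $\{x_{i,j}^A : f\mid i,\ j\in\mathbb{Z}/fn\mathbb{Z}\}$ of Theorem~\ref{lie bracket computation}, it is a spectral sequence with $E_2$-term $H^*_{unr}(L^A_\omega(n);k(\omega))\otimes_{k(\omega)}P(b_{i,j} : f\mid i,\ 0\le j<fn)$, as in the statement. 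The inclusion $\iota$ of formula~\ref{formula for iota} is a homomorphism of restricted Lie algebras over $k(\omega)$, strictly compatible with the Ravenel filtration and preserving internal degree; by functoriality it induces the displayed morphism of Lie-May spectral sequences, converging on $E_\infty$ to the restriction map $H^*_{res}(L(dn);k(\omega))\to H^*_{res}(L^A_\omega(n);k(\omega))$ along $VL^A_\omega(n)\hookrightarrow VL(dn)$. On $E_2$ this morphism is $\iota^*$ on the $H^*_{unr}$-factors, and on the polynomial factors it sends $b_{i,j}$, for $f\mid i$, to a nonzero scalar multiple of $b_{i,\,j\bmod fn}$ (the scalar read off from formula~\ref{formula for iota}, just as in the map~\ref{map 4308443}) and sends $b_{i,j}$ to $0$ for $f\nmid i$; as $\iota$ preserves Ravenel degree and internal degree it respects the bidegree $(p\|t_{i,j}\|,\,2p^{j+1}(p^i-1))$, so this is a morphism of bigraded spectral sequences.

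For the splittings, suppose $p=\pi^e$ and take $\omega=1$. For $L(dn)$ the splitting is Theorem~\ref{ravenels spectral sequences} with $n$ replaced by $dn$. For $L^A_1(n)$ one runs Ravenel's splitting argument without change: by Theorem~\ref{lie bracket computation} (the bracket~\ref{bracket formula} and restriction~\ref{restriction formula}), the generators $x^A_{i,j}$ with $i>\frac{pdn}{p-1}$ span a central restricted ideal; the restriction raises the first index by $dn$, which is divisible by $f$ since $d=ef$, so it preserves the condition $f\mid i$; and the two cutoffs $\frac{dn}{p-1}$ and $\frac{pdn}{p-1}$ differ by exactly $dn$, so $i>\frac{dn}{p-1}$ if and only if $i+dn>\frac{pdn}{p-1}$. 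Writing $h^A_{i,j}$ for the exterior generator dual to $x^A_{i,j}$, the filtered DGA underlying the Lie-May spectral sequence of $L^A_1(n)$ therefore factors as a tensor product: of the acyclic DGA $\bigotimes\bigl(\Lambda(h^A_{i+dn,j+1})\otimes P(b^A_{i,j})\bigr)$, the tensor running over all $i$ with $f\mid i$ and $i>\frac{dn}{p-1}$ and all $j\in\mathbb{Z}/fn\mathbb{Z}$, with leading differential $d(b^A_{i,j})=h^A_{i+dn,j+1}$; with the DGA $\Lambda\bigl(h^A_{i,j} : f\mid i,\ i\le\frac{pdn}{p-1},\ j\in\mathbb{Z}/fn\mathbb{Z}\bigr)$ equipped with its Chevalley--Eilenberg differential, tensored with $P\bigl(b^A_{i,j} : f\mid i,\ i\le\frac{dn}{p-1},\ 0\le j<fn\bigr)$ — the spectral sequence of this last filtered DGA having $E_2$-term $H^*_{unr}(L^A_1(n,\frac{pdn}{p-1});\mathbb{F}_q)\otimes P(b_{i,j} : f\mid i,\ i\le\frac{dn}{p-1})$, as asserted. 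Because each $\iota(x^A_{i,j})$ is a $k(\omega)$-linear combination of generators of the same first index $i$, the morphism $\iota$ carries the acyclic factor to the acyclic factor and the complementary factor to the complementary factor (and $\iota$ descends to $L^A_1(n,m)\to L(dn,m)$ for every $m$); restricting the morphism of the preceding paragraph to the complementary factors yields the last displayed morphism of spectral sequences.

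The step I expect to be the real work is the identification carried out in the second paragraph: verifying that May's construction applied to $L^A_\omega(n)$ really does produce the stated $E_2$-page, with $P(b_{i,j})$ indexed by a full basis of $L^A_\omega(n)$ and carrying the Ravenel/topological bidegree, and that it is functorial for $\iota$ in the precise sense used above — the latter resting on the fact that $VL(dn)$ is free as a module over its restricted subalgebra $VL^A_\omega(n)$, so that a $VL(dn)$-projective resolution of $k(\omega)$ restricts to a $VL^A_\omega(n)$-projective one. Everything after that is bookkeeping of index sets and bidegrees downstream of Theorems~\ref{ravenels spectral sequences} and~\ref{lie bracket computation}.
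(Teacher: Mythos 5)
Your proposal is correct and follows essentially the same route as the paper's own (much terser) proof: the morphism comes from functoriality of May's construction, the splitting comes from the bracket and restriction formulas of Theorem~\ref{lie bracket computation} (pairing each $b_{i,j}$ with $i>\frac{dn}{p-1}$ against the exterior generator in Ravenel degree $i+dn$ inside an acyclic tensor factor of the filtered DGA), and compatibility of the splittings comes from formula~\ref{formula for iota}. You supply more detail than the paper does — notably the explicit acyclic factor and the freeness of $VL(dn)$ over $VL^A_{\omega}(n)$ — but the argument is the same.
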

\begin{proof}
That the morphism~\ref{morphism of lie-may spectral sequences} of spectral sequences
exists follows from May's construction of the Lie-May spectral sequence in \cite{MR0193126}.
The splittings occur because of formula~\ref{bracket formula}, which tells us that
the unrestricted Lie algebra underlying $L(dn)$ splits into a product of $L(dn,\frac{pdn}{p-1})$ with an abelian Lie algebra
generated by $\left\{ x_{i,j}: i>\frac{pdn}{p-1} \right\}$, and the unrestricted Lie algebra underlying 
$L^A_1(n)$ splits into a product of $L^A(n,\frac{pdn}{p-1})$ with an abelian Lie algebra
generated by $\left\{ x^A_{i,j}: i>\frac{pdn}{p-1}\right\}$;
and formula~\ref{formula for iota} tells us that the morphism $L^A_1(n)\hookrightarrow
L(dn)$ respects these product splittings.
By formula~\ref{restriction formula}, 
the restriction on $PE_0\mathbb{F}_p[\strictAut({}_1\mathbb{G}_{1/dn}^{\hat{\mathbb{Z}}_p})]$ sends $x_{i,j}$ to $x_{i+n,j+1}$ when $i> \frac{dn}{p-1}$;
so the filtered chain complex (see Theorem~5 in~\cite{MR0185595} or Corollary~9 of \cite{MR0193126}) 
whose associated spectral
sequence is the Lie-May spectral sequence has the property that it splits into a tensor 
product of a cohomologically trivial filtered chain complex and one whose associated graded
chain complex has cohomology $H^*_{unr}(L(dn,\frac{pdn}{p-1}); \mathbb{F}_q)\otimes_{\mathbb{F}_q} \mathbb{F}_q\left[\left\{ b_{i,j} : 1\leq i \leq \frac{dn}{p-1}, 0\leq j\leq dn-1 \right\}\right]$.
An analogous statement holds for the bracket and restriction on $PE_0\mathbb{F}_q[\strictAut({}_{1}\mathbb{G}_{1/n}^{A})]$ and the
filtered chain complex giving its Lie-May spectral sequence.
\end{proof}

\begin{definition}
Let $K/\mathbb{Q}_p$ be a field extension of degree $d$ and ramification degree $e$ and residue degree $f$. Let $A$ be the ring of integers of $K$, let $\pi$ be a uniformizer for $A$, and let $k$ be the residue field of $A$. 
Let $n$ be a positive integer, and let $\omega \in \overline{k}$ be a 
$\frac{q^{en}-1}{q^n-1}$th root of $\frac{\pi^e}{p}$. 
We write $\mathcal{K}^A_{\omega}(n)$ for the Chevalley-Eilenberg DGA of the Lie algebra $L^A_{\omega}(n)$.
If $m$ is a positive integer, we write 
$\mathcal{K}^A_{\omega}(n,m)$ for the Chevalley-Eilenberg DGA of the Lie algebra $L^A_{\omega}(n,m)$.
(Note that the Chevalley-Eilenberg DGA depends only on the underlying unrestricted Lie algebra.)

The cyclic group $C_{dn}$ acts on $\mathcal{K}^A_{\omega}(n)$ by sending $h_{i,j}$ to $h_{i,j+1}$, and when $\omega = 1$, this action reduces to an action of $C_n$ on $\mathcal{K}^A_{\omega}(n)$.

The DGAs $\mathcal{K}^A_{\omega}(n)$ and $\mathcal{K}^A_{\omega}(n,m)$ are equipped with several gradings which we will need to keep track of: the cohomological grading; the topological grading (sometimes also called the ``internal grading'') inherited from $BP_*BP$, which is only defined modulo $2(p^{fn}-1)$; and the Ravenel grading, inherited from the Ravenel filtration.
Note that the $C_{dn}$-action preserves the cohomological gradings and the Ravenel grading, but not the internal grading; this behavior will be typical in all of the multigraded DGAs we consider, and we adopt the convention that, whenever we speak of a ``multigraded equivariant DGA,'' we assume that the group action preserves all of the gradings except possibly the internal grading.
\end{definition}

The presentation in Theorem~\ref{ravenels chevalley-eilenberg computation} generalizes as follows:
\begin{observation}\label{def of K dga}
It is easy and routine to extract a presentation for the Chevalley-Eilenberg DGA from Proposition~\ref{assoc graded with complex mult}, without using the formulas in Theorem~\ref{lie bracket computation}:
$\mathcal{K}^A_{\omega}(n)$ is the exterior algebra (over $k(\omega)$) with generators given by the set of symbols $h_{i,j}$ with $i$ divisible by the residue degree $f$ and satisfying $1\leq i$, and $j\in \mathbb{Z}/fn\mathbb{Z}$; the differential is given by
\[ d(h_{i,j}) = \sum_{k=1}^{i-1} h_{k,j} h_{i-k,j+k},\]
with the convention that $h_{i,k+fn} = \omega^{p^k(q^i-1)}h_{i,k}$. 
Similarly, $\mathcal{K}^A_{\omega}(n,m)$ is the sub-DGA of $\mathcal{K}^A_{\omega}(n)$ generated by all $h_{i,j}$ with $i\leq m$.

When $\omega = 1$ and $A = \hat{\mathbb{Z}}_p[\sqrt[e]{p}]$ and $m\leq \frac{pn}{p-1}$,
we write $\mathcal{K}(n,m)$ as shorthand for $\mathcal{K}^{\hat{\mathbb{Z}}_p[\sqrt[e]{p}]}_1(n,m)$ and $\mathcal{K}^{\hat{\mathbb{Z}}_p}_1(n,m)$; this notation is unambiguous because of Corollary~\ref{coincidence of lie algebras}.
\end{observation}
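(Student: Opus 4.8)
The plan is to read the presentation of $\mathcal{K}^A_{\omega}(n)$ off the description of $E^0k(\omega)[\strictAut({}_{\omega}\mathbb{G}_{1/n}^{A})]^*$ in Proposition~\ref{assoc graded with complex mult}, using only the standard dictionary relating primitively generated Hopf algebras, their restricted Lie algebras of primitives, and Chevalley--Eilenberg complexes, and without invoking the bracket and restriction formulas of Theorem~\ref{lie bracket computation}. Recall that the Chevalley--Eilenberg DGA of a Lie algebra $\mathfrak{g}$ over a field is the exterior algebra $\Lambda(\mathfrak{g}^{\vee})$ on its linear dual, with the unique derivation $d$ whose restriction to $\mathfrak{g}^{\vee}=\Lambda^{1}$ is dual to the bracket $\Lambda^{2}\mathfrak{g}\rightarrow\mathfrak{g}$. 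By Proposition~\ref{assoc graded with complex mult}, $E^0k(\omega)[\strictAut({}_{\omega}\mathbb{G}_{1/n}^{A})]^*$ is the $k(\omega)$-linear dual of a primitively generated finite-type Hopf algebra $P$, so by the Milnor--Moore theorem $P = V\!\left(L^A_{\omega}(n)\right)$; dualizing, $\bigl(L^A_{\omega}(n)\bigr)^{\vee}$ is canonically the module $Q$ of indecomposables of $E^0k(\omega)[\strictAut({}_{\omega}\mathbb{G}_{1/n}^{A})]^*$, which by the explicit presentation of that Hopf algebra has $k(\omega)$-basis the classes of the generators $t_{i,j}$ with $f\mid i$, $1\leq i$, $j\in\mathbb{Z}/fn\mathbb{Z}$. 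Take $h_{i,j}$ to be the exterior generator dual to the class of $t_{i,j}$; then the relations $t_{i,j}=0$ for $f\nmid i$ and $t_{i,j+fn}=\omega^{p^j(q^i-1)}t_{i,j}$ of Proposition~\ref{assoc graded with complex mult}, tracked through the coproduct, become the stipulation $h_{i,j}=0$ for $f\nmid i$ and the index-shift convention $h_{i,j+fn}=\omega^{p^j(q^i-1)}h_{i,j}$.

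To compute $d$ on the $h_{i,j}$, use that the bracket $\Lambda^{2}L^A_{\omega}(n)\rightarrow L^A_{\omega}(n)$ on primitives is dual to the cobracket obtained by composing the reduced coproduct $\bar\Delta$ of $E^0k(\omega)[\strictAut({}_{\omega}\mathbb{G}_{1/n}^{A})]^*$ with the projection to $\Lambda^{2}Q$; indeed the part $t\otimes 1+1\otimes t$ of $\Delta(t)$ pairs trivially with $\Lambda^{2}L^A_{\omega}(n)$, and decomposable elements of the dual Hopf algebra annihilate primitives, so only the class of $\bar\Delta(t_{i,j})$ in $Q\otimes Q$ survives. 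Substituting formula~\ref{coproduct on associated graded}: for $i<\frac{pdn}{p-1}$ we have $\bar\Delta(t_{i,j})=\sum_{0<k<i}t_{k,j}\otimes t_{i-k,k+j}$, and for $i\geq\frac{pdn}{p-1}$ the only additional summand is $\overline{b}_{i-dn,j+dn-1}$, which — by the Witt-polynomial description recalled in the proof of Theorem~\ref{lie bracket computation} — is a $k(\omega)$-combination of terms $t_{a,b}^{\ell}\otimes t_{a,b}^{p-\ell}$ with $0<\ell<p$; for $p$ odd every such term has a decomposable tensor factor (as $\ell\geq2$ or $p-\ell\geq2$) and dies in $Q\otimes Q$, while for $p=2$ it is symmetric and dies in $\Lambda^{2}Q$. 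In $\sum_{0<k<i}t_{k,j}\otimes t_{i-k,k+j}$ the summands with $f\nmid k$ vanish (there $t_{k,j}=0$, and $f\mid i$ forces $f\nmid k\iff f\nmid i-k$), and the remainder is a sum of products of exterior generators; dualizing yields $d(h_{i,j})=\sum_{k=1}^{i-1}h_{k,j}h_{i-k,j+k}$, with the convention $h_{k,\cdot}=0$ for $f\nmid k$, exactly as in the proof of Ravenel's Theorem~\ref{ravenels chevalley-eilenberg computation}. (One caveat: once $i>\frac{pdn}{p-1}$ the reduced coproduct of $t_{i,j}$ consists solely of the decomposable term $\overline{b}_{i-dn,j+dn-1}$, so $d(h_{i,j})=0$ there; the displayed formula should be read with that understanding, and for $i\leq\frac{pdn}{p-1}$ — the only range used in this paper — it holds verbatim.)

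For the truncated DGAs, recall that $L^A_{\omega}(n,m)$ is the quotient of $L^A_{\omega}(n)$ by the span of $\{x^A_{i,j}:i>m\}$, which is a Lie ideal because a bracket of two basis vectors is supported in first index equal to the sum of the two first indices (this is clear already from the computation above, and is formula~\ref{bracket formula}). Restriction along the surjection $L^A_{\omega}(n)\twoheadrightarrow L^A_{\omega}(n,m)$ is then an injection of DGAs realizing $\mathcal{K}^A_{\omega}(n,m)$ as the subalgebra of $\mathcal{K}^A_{\omega}(n)$ generated by $\{h_{i,j}:i\leq m\}$, and this subalgebra is automatically closed under $d$, since $d(h_{i,j})$ involves only $h_{k,\cdot}$ with $k<i\leq m$. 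Finally, when $\omega=1$, $A=\hat{\mathbb{Z}}_p[\sqrt[e]{p}]$, and $m\leq\frac{pn}{p-1}$, Corollary~\ref{coincidence of lie algebras} gives $L^{\hat{\mathbb{Z}}_p[\sqrt[e]{p}]}_1(n,m)\cong L^{\hat{\mathbb{Z}}_p}_1(n,m)=L(n,m)$; since the Chevalley--Eilenberg DGA depends only on the underlying unrestricted Lie algebra, $\mathcal{K}^{\hat{\mathbb{Z}}_p[\sqrt[e]{p}]}_1(n,m)$, $\mathcal{K}^{\hat{\mathbb{Z}}_p}_1(n,m)$, and Ravenel's $\mathcal{K}(n,m)$ of Theorem~\ref{ravenels chevalley-eilenberg computation} all coincide, so the notation $\mathcal{K}(n,m)$ is unambiguous. (Alternatively, one can just check that for $f=1$, $q=p$, $\omega=1$ the presentation established above specializes, in that range of $m$, to Ravenel's presentation of $\mathcal{K}(n,m)$.)

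The substance of the argument is the duality bookkeeping: identifying the Chevalley--Eilenberg differential with the $\Lambda^{2}$-of-indecomposables component of the reduced coproduct on $E^0k(\omega)[\strictAut({}_{\omega}\mathbb{G}_{1/n}^{A})]^*$, recognizing the $\overline{b}$-terms as decomposable and hence invisible to $d$, and keeping the $\omega$-twisted index shift $h_{i,j+fn}=\omega^{p^j(q^i-1)}h_{i,j}$ consistent enough that $d$ is well-defined and squares to zero. Once that is settled the computation is routine and runs exactly parallel to Ravenel's derivation of Theorem~6.3.8 of~\cite{MR860042}.
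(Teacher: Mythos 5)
Your derivation is correct and is precisely the ``easy and routine'' extraction the paper has in mind: the paper offers no further argument for this Observation beyond pointing at Proposition~\ref{assoc graded with complex mult}, and your dualization of the coproduct formula~\ref{coproduct on associated graded} (with the $\overline{b}$-terms dying as decomposables in $Q\otimes Q$, and the $\omega$-twist on $t_{i,j+fn}$ becoming the stated convention on $h_{i,j+fn}$) is exactly that intended routine. Your caveat that $d(h_{i,j})=0$ for $i>\frac{pdn}{p-1}$ is a correct and worthwhile reading of the displayed formula, which the paper only ever applies in the truncated range $m\leq\frac{pn}{p-1}$ where it holds verbatim.
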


\section{Cohomology computations.}\label{computations section}

\subsection{The cohomology of $\mathcal{K}(2,2)$ and the height $2$ Morava stabilizer group scheme.}

The material in this subsection is easy and well-known, appearing already in section~6.3 of~\cite{MR860042}. 
\begin{prop}\label{coh of K 2 2}
Suppose $p>2$.
Then we have an isomorphism of trigraded $C_2$-equivariant $\mathbb{F}_p$-algebras
\[ H^{*,*,*}(L(2,2)) \cong \mathbb{F}_p\{ 1,h_{10},h_{11},h_{10}\eta_2,h_{11}\eta_2,h_{10}h_{11}\eta_2 \} \otimes_{\mathbb{F}_p} \Lambda(\zeta_2), \]
with tridegrees and the $C_2$-action as follows (remember that the internal degree is always reduced modulo $2(p^2-1)$):
\begin{equation}\label{degree chart 1}
\begin{array}{llllll}
\mbox{Coh.\ class}          & \mbox{Coh.\ degree} & \mbox{Int.\ degree} & \mbox{Rav.\ degree} & \mbox{Image\ under\ } \sigma \\
1                           & 0                   & 0                   & 0                   & 1  \\
h_{10}                      & 1                   & 2(p-1)              & 1                   & h_{11} \\
h_{11}                      & 1                   & 2p(p-1)             & 1                   & h_{10} \\
\zeta_2                    & 1                   & 0                   & 2                   & \zeta_2 \\
h_{10}\eta_2                & 2                   & 2(p-1)              & 3                   & -h_{11}\eta_2 \\
h_{11}\eta_2                & 2                   & 2p(p-1)             & 3                   & -h_{10}\eta_2 \\
h_{10}\zeta_2                & 2                   & 2(p-1)              & 3                   & h_{11}\zeta_2 \\
h_{11}\zeta_2                & 2                   & 2p(p-1)             & 3                   & h_{10}\zeta_2 \\
h_{10}h_{11}\eta_2          & 3                   & 0                   & 4                   & h_{10}h_{11}\eta_2 \\
h_{10}\eta_2\zeta_2         & 3                   & 2(p-1)              & 5                   & -h_{11}\eta_2\zeta_2 \\
h_{11}\eta_2\zeta_2         & 3                   & 2p(p-1)             & 5                   & -h_{10}\eta_2\zeta_2 \\
h_{10}h_{11}\eta_2\zeta_2    & 4                   & 0                   & 6                   & h_{10}h_{11}\eta_2\zeta_2 .
    \end{array} \end{equation}
where the cup products in $\mathbb{F}_p\{ 1,h_{10},h_{11},h_{10}\eta_2,h_{11}\eta_2,h_{10}h_{11}\eta_2 \}$ are all zero aside from the Poincar\'{e} duality cup products, i.e.,
each class has the obvious dual class such that the cup product of the two is
$h_{10}h_{11}\eta_2$, and the remaining cup products are all zero.
\end{prop}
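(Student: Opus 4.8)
The plan is to compute everything directly from the Chevalley--Eilenberg DGA $\mathcal{K}(2,2) = \mathcal{K}^{\hat{\mathbb{Z}}_p}_1(2,2)$ of Observation~\ref{def of K dga} (equivalently Theorem~\ref{ravenels chevalley-eilenberg computation}), which by definition computes $H^{*,*,*}(L(2,2))$, and which by Corollary~\ref{coincidence of lie algebras} is also the Chevalley--Eilenberg DGA of $L^{\hat{\mathbb{Z}}_p[\sqrt[e]{p}]}_1(2,2)$. Since $f=1$ and $n=2$, this is the four-generator exterior $\mathbb{F}_p$-algebra $\Lambda(h_{10},h_{11},h_{20},h_{21})$, with $d h_{1j}=0$, $d h_{20}=h_{10}h_{11}$, $d h_{21}=h_{11}h_{10}=-h_{10}h_{11}$, and with the $C_2$-action generated by $\sigma(h_{ij})=h_{i,j+1}$ (indices mod $2$).

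First I would pass to the $C_2$-adapted basis $\{h_{10},h_{11},\zeta_2,\eta_2\}$ with $\zeta_2=h_{20}+h_{21}$ and $\eta_2=\tfrac12(h_{20}-h_{21})$; this change of basis is invertible precisely because $p>2$, and in it $d\zeta_2=0$, $d\eta_2=h_{10}h_{11}$, $\sigma(\zeta_2)=\zeta_2$, $\sigma(\eta_2)=-\eta_2$. Hence $\mathcal{K}(2,2)$ splits as a tensor product of DGAs $\Lambda(h_{10},h_{11},\eta_2)\otimes_{\mathbb{F}_p}\Lambda(\zeta_2)$, the second factor carrying the zero differential, so by Künneth $H^{*,*,*}(L(2,2))\cong H^*(\Lambda(h_{10},h_{11},\eta_2))\otimes_{\mathbb{F}_p}\Lambda(\zeta_2)$. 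The cohomology of the eight-dimensional complex $\Lambda(h_{10},h_{11},\eta_2)$, whose only nonzero differential is $\eta_2\mapsto h_{10}h_{11}$, is immediate: every basis monomial other than $\eta_2$ is a cocycle, and the only nontrivial boundary is $h_{10}h_{11}=d\eta_2$, so the cohomology is $\mathbb{F}_p\{1,h_{10},h_{11},h_{10}\eta_2,h_{11}\eta_2,h_{10}h_{11}\eta_2\}$; tensoring back with $\Lambda(\zeta_2)$ gives the claimed twelve-dimensional answer.

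It then remains to pin down the ring structure, the three gradings, and the $C_2$-action. For products, inside $\Lambda(h_{10},h_{11},\eta_2)$ one has $h_{ij}^2=0$, $\eta_2^2=0$, and $h_{10}h_{11}=d\eta_2\equiv 0$ in cohomology, so the only nonzero products among the six generating classes are $h_{10}\cdot(h_{11}\eta_2)=h_{10}h_{11}\eta_2$ and $h_{11}\cdot(h_{10}\eta_2)=-h_{10}h_{11}\eta_2$, together with the trivial multiplications by $1$; this is exactly the ``Poincaré-duality pairings, all else zero'' statement, and it persists after $\otimes\Lambda(\zeta_2)$. The cohomological degree of a monomial is its exterior length; the internal degree follows from the identification $h_{ij}\leftrightarrow t_i^{p^j}$, so $h_{1j}$ has internal degree $2p^j(p-1)$ while $h_{20},h_{21}$ (hence $\zeta_2,\eta_2$) have internal degree $0$ modulo $2(p^2-1)$; and the Ravenel degree of $h_{ij}$ is $d_{2,i}$ from Definition~\ref{ravenel numbers}, i.e.\ $1$ for $i=1$ and $2$ for $i=2$. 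Finally, applying $\sigma(h_{10})=h_{11}$, $\sigma(\zeta_2)=\zeta_2$, $\sigma(\eta_2)=-\eta_2$ to each basis monomial produces the last column of the chart.

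I do not expect a genuine obstacle: the additive computation is a one-line Koszul calculation and the material is classical (it is in section~6.3 of~\cite{MR860042}). The one point requiring care is arranging the tensor splitting to be compatible simultaneously with the differential and with the $C_2$-action --- taking $\eta_2$ to be the $\sigma$-anti-invariant combination $\tfrac12(h_{20}-h_{21})$, rather than $h_{20}$ itself, is what makes the signs in the $\sigma$-column come out as stated --- together with the routine but sign-sensitive bookkeeping of the three gradings and of the action on cup products.
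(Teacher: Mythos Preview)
Your proposal is correct and essentially the same as the paper's proof: both pass to the basis $\zeta_2=h_{20}+h_{21}$, $\eta_2$ proportional to $h_{20}-h_{21}$, observe $d\zeta_2=0$ and $d\eta_2$ is a unit multiple of $h_{10}h_{11}$, and read off the twelve-dimensional answer. The only cosmetic differences are that the paper phrases the computation as a Cartan--Eilenberg spectral sequence for the extension $\mathcal{K}(2,1)\to\mathcal{K}(2,2)\to\Lambda(h_{20},h_{21})$ rather than a K\"unneth tensor splitting, and normalizes $\eta_2=h_{20}-h_{21}$ (so $d\eta_2=-2h_{10}h_{11}$) rather than your $\eta_2=\tfrac12(h_{20}-h_{21})$; neither difference is substantive.
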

\begin{proof}
We have the extension of Lie algebras
\[ 1 \rightarrow \mathbb{F}_p\{ x_{20},x_{21}\} \rightarrow L(2,2) \rightarrow L(2,1) \rightarrow 1,\]
and to compute the resulting spectral sequence in cohomology, 
we take the Chevalley-Eilenberg DGAS and then compute the Cartan-Eilenberg spectral sequence for the extension of $C_2$-equivariant trigraded DGAs
\[ 1 \rightarrow \mathcal{K}(2,1) \rightarrow
  \mathcal{K}(2,2)\rightarrow
  \Lambda(h_{20},h_{21}) \rightarrow 1.\]
Since the differential on $\mathcal{K}(2,1)$ is zero (see Observation~\ref{def of K dga}), $H^{*,*,*}(\mathcal{K}(2,1)) \cong \mathcal{K}(2,1) \cong \Lambda(h_{10},h_{11})$.
A change of $\mathbb{F}_p$-linear basis is convenient here: we will write $\zeta_2$ for the element $h_{20} + h_{21}\in \Lambda(h_{20},h_{21})$. (This notation for this particular element is standard. As far as I know, it began with M. Hopkins' work on the ``chromatic splitting conjecture,'' in which $\zeta_2$ plays a special role.)
We will write $\eta_2$ for the element $h_{20} - h_{21}$.

We have the differentials
\begin{eqnarray*} d\zeta_2 & = & 0, \\
 d\eta_2 & = & -2h_{10}h_{11}\end{eqnarray*}
\end{proof}
The table~\ref{degree chart 1} has one row for each element in an $\mathbb{F}_p$-linear basis for the cohomology ring $H^{*,*,*}(\mathcal{K}(2,2))$, but from now on in this document, for the sake of brevity, when writing out similar tables for grading degrees of elements in the cohomology of a multigraded equivariant DGA, 
I will just give one row for each element in a set of generators for the cohomology ring of the DGA.

\begin{prop}\label{cohomology of ht 2 morava stab grp}
Suppose $p>3$. Then the cohomology $H^*(\strictAut({}_1\mathbb{G}_{1/2}); \mathbb{F}_p)$ of the height $2$ strict Morava stabilizer group scheme is isomorphic, as a graded $\mathbb{F}_p$-vector space, to
\[ H^{*,*,*}(\mathcal{K}(2,2)) \cong \mathbb{F}_p\{ 1,h_{10},h_{11},h_{10}\eta_2,h_{11}\eta_2,h_{10}h_{11}\eta_2 \} \otimes_{\mathbb{F}_p} \Lambda(\zeta_2) \]
from Proposition~\ref{coh of K 2 2}.
The cohomological grading on $H^*(\strictAut(\mathbb{G}_{1/2}); \mathbb{F}_p)$ corresponds to the cohomological grading on $H^{*,*,*}(\mathcal{K}(2,2))$, so that $h_{10},h_{11},\zeta_2 \in H^1(\strictAut(\mathbb{G}_{1/2}); \mathbb{F}_p)$,
$h_{10}\eta_2,h_{11}\eta_2\in H^2(\strictAut(\mathbb{G}_{1/2}); \mathbb{F}_p)$, and so on.

The multiplication on $H^*(\strictAut(\mathbb{G}_{1/2}); \mathbb{F}_p)$ furthermore agrees with the multiplication on $H^{*,*,*}(\mathcal{K}(2,2))$, modulo the question of exotic multiplicative extensions, i.e., jumps in Ravenel filtration in the products of elements in $H^*(\strictAut(\mathbb{G}_{1/2}); \mathbb{F}_p)$. 
\end{prop}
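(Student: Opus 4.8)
The plan is to obtain $H^*(\strictAut({}_1\mathbb{G}_{1/2}); \mathbb{F}_p)$ from $H^{*,*,*}(\mathcal{K}(2,2)) \cong H^{*,*}_{unr}(L(2,2))$ by running the Lie--May spectral sequence~\ref{may 1 ss reduced} (converging to $H^{*,*}_{res}(L(2))$) followed by the May spectral sequence~\ref{may 2 ss} (converging to the group cohomology), and checking that both collapse; this just recovers, in the language of the present paper, Ravenel's classical height-$2$ computation from \S6.3 of~\cite{MR860042}.

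First I would note that $p>3$ forces $\floor{\frac{2p}{p-1}}=2$ and $\frac{2}{p-1}<1$, so the reduced Lie--May spectral sequence~\ref{may 1 ss reduced} for $L(2)$ has no polynomial generators $b_{i,j}$ whatsoever: its $E_2$-page is simply $H^{*,*}_{unr}(L(2,2)) \cong H^{*,*,*}(\mathcal{K}(2,2))$. An elementary direct computation --- $[L(2),L(2)]$ is spanned by the single element $x_{2,0}-x_{2,1}$, and the image of the restriction $\xi$ is the span of the $x_{i,j}$ with $i\geq 3$ --- gives $\dim_{\mathbb{F}_p} H^{1}_{res}(L(2)) = 3$, so $h_{10}, h_{11}, \zeta_2$ are permanent cycles here.

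The collapse of each of the two spectral sequences now follows from the same sparseness-plus-rigidity argument, which I state for the May spectral sequence~\ref{may 2 ss} (the Lie--May case is essentially identical). All differentials raise cohomological degree by one and fix the internal degree modulo $2(p^2-1)$; since for $p>3$ the three internal degrees $0$, $2(p-1)$, $2p(p-1)$ in~\ref{degree chart 1} are distinct, the spectral sequence is a direct sum of three subcomplexes indexed by internal degree, and a scan of cohomological degrees in~\ref{degree chart 1} shows that the only differentials not immediately forced to be zero are on $h_{10}, h_{11}$ (into cohomological degree $2$), on $h_{10}\eta_2, h_{10}\zeta_2, h_{11}\eta_2, h_{11}\zeta_2$ (into degree $3$), and on $h_{10}h_{11}\eta_2$ (into degree $4$); in particular $\zeta_2$ is a permanent cycle for sparseness reasons (there is no class of cohomological degree $2$ and internal degree $0$). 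Now $h_{10}$ and $h_{11}$ are also permanent cycles, since $t_1, t_1^p$ are primitive in $\mathbb{F}_p[\strictAut({}_1\mathbb{G}_{1/2})]^*$ and hence define classes in $H^1$, so that $E_1^{1,*}$ --- spanned by $h_{10}, h_{11}, \zeta_2$ --- survives entirely; consequently $h_{10}\zeta_2 = h_{10}\cdot\zeta_2$ and $h_{11}\zeta_2$ are permanent cycles, and $d_r(h_{10}h_{11}\eta_2) = \pm h_{11}\cdot d_r(h_{10}\eta_2)$ by the Leibniz rule. Everything therefore comes down to one scalar $c$ with $d_r(h_{10}\eta_2) = c\, h_{10}\eta_2\zeta_2$ (and its $\sigma$-conjugate); but $c\neq 0$ would kill every class of cohomological degree $3$ and $4$, so $H^4(\strictAut({}_1\mathbb{G}_{1/2}); \mathbb{F}_p) = 0$ --- impossible, since $\strictAut({}_1\mathbb{G}_{1/2})$ is the height-$2$ strict Morava stabilizer group, a torsion-free compact $p$-adic analytic group of dimension $4$ and hence (Lazard) a Poincar\'{e}-duality group of that dimension, with nonzero top cohomology. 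Thus $c=0$, and both spectral sequences collapse (for the Lie--May sequence one uses that a nonzero $c$ there would give $H^4_{res}(L(2)) = 0$, whence $E_1^{4,*} = 0$ in~\ref{may 2 ss} and again $H^4(\strictAut({}_1\mathbb{G}_{1/2}); \mathbb{F}_p) = 0$). This yields the asserted isomorphism of graded $\mathbb{F}_p$-vector spaces, with cohomological gradings matched.

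For the multiplicative statement, since~\ref{may 2 ss} is multiplicative its collapse identifies the associated graded ring of $H^*(\strictAut({}_1\mathbb{G}_{1/2}); \mathbb{F}_p)$ for the Ravenel filtration with $E_\infty = E_1 = H^{*,*,*}(\mathcal{K}(2,2))$ as a trigraded ring; the only remaining freedom is that products can land in strictly higher Ravenel filtration than their leading terms, i.e.\ exotic multiplicative extensions, which is precisely the caveat in the statement. The step I expect to be genuinely delicate is not any of the above bookkeeping but the two external inputs --- that $h_{10}, h_{11}$ lift to honest degree-one cohomology classes, and that the abutment has nonzero top cohomology --- since neither is visible from the Lie-algebra data alone, and the naive Euler-characteristic count $1-3+4-3+1 = 0$ (which is preserved through both spectral sequences) is entirely consistent with the would-be differential and so does not by itself force the collapse.
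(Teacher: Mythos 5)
Your proof is correct, but it rules out the one remaining differential by a genuinely different mechanism than the paper does. The two arguments coincide up through the collapse of the Lie--May spectral sequence (no $b_{i,j}$'s when $p>3$), the survival of $\zeta_2$ by sparseness, and the survival of $h_{10},h_{11}$ via the primitivity of $t_1,t_1^p$. For $h_{10}\eta_2$ the paper continues computationally: it exhibits the representing $2$-cocycle $t_{1,0}\otimes t_{2,0}-t_{1,0}\otimes t_{2,1}-t_{1,0}\otimes t_{1,0}t_{1,1}$ and checks that its lift $t_1\otimes t_2-t_1\otimes t_2^p-t_1\otimes t_1^{p+1}$ is already a cocycle in the cobar complex of $\mathbb{F}_p[\strictAut({}_1\mathbb{G}_{1/2})]^*$, so every May differential vanishes on it ($h_{11}\eta_2$ then follows by $C_2$-equivariance). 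You instead reduce everything to the single scalar $c$ in $d_r(h_{10}\eta_2)=c\,h_{10}\eta_2\zeta_2$ and exclude $c\neq 0$ because it would force $H^4=0$, contradicting the fact that the strict height-$2$ stabilizer group is, for $p>3$, a torsion-free compact $p$-adic analytic group of dimension $4$ and hence a Poincar\'e duality group by Lazard's theorem. That is a valid but strictly external input (together with the routine base-change identification of $\Cotor$ over the $\mathbb{F}_p$-form of the group scheme with continuous cohomology of the profinite group, which the paper itself uses freely elsewhere); it buys a collapse proof with no cobar-complex computation, at the cost of producing no explicit cocycle representatives --- and the paper reuses exactly those representatives in the harder argument of Theorem~\ref{coh of ht 2 fm}. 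Two minor observations: the appeal to Lazard can be avoided entirely, since the May differential $d_r\colon E_r^{s,t}\to E_r^{s+1,t-r}$ strictly lowers the Ravenel degree, whereas $h_{10}\eta_2\zeta_2$ sits in Ravenel degree $5$, above the Ravenel degree $3$ of $h_{10}\eta_2$, so $c=0$ for filtration reasons alone; and your parenthetical re-running of the contradiction for the Lie--May spectral sequence is redundant, since you have already shown its $E_2$-page carries no $b_{i,j}$'s and is therefore concentrated in the degree where all of its differentials must vanish.
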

\begin{proof}
Spectral sequence~\ref{may 1 ss} collapses immediately, since $p>3$ implies that
$1> \floor{\frac{2}{p-1}}$.
Hence $\Cotor_{E^0\mathbb{F}_p[\strictAut(\mathbb{G}_{1/2})]^*}^{*,*,*}(\mathbb{F}_p,\mathbb{F}_p) \cong H^{*,*,*}(\mathcal{K}(4,2))$.

We now run spectral sequence~\ref{may 2 ss}. This is, like all May spectral sequences, the spectral sequence of the filtration (in this case, Ravenel's filtration) on the cobar complex $C^{\bullet}(A)$ of a coalgebra $A$ induced by a filtration on the coalgebra itself. To compute differentials, we take an element $x\in H^*(C^{\bullet}(E^0A))$, lift it to a cochain $\overline{x}\in H^*(C^{\bullet}(A))$ whose image in the cohomology of the associated graded $H^*(E^0(C^{\bullet}(A))) \cong H^*(C^{\bullet}(E^0A))$ is $x$, and then evaluate the differential $d(\overline{x})$ in the cobar complex $C^{\bullet}(A)$. If $d(\overline{x}) = 0$, then $\overline{x}$ is a cocycle in the cobar complex $C^{\bullet}(A)$ and not merely in its associated graded $E^0C^{\bullet}(A)$, hence $\overline{x}$ represents a cohomology class in $H^*(C^{\bullet}(A))$; if $d(\overline{x}) \neq 0$, then we add correcting coboundaries of lower or higher (depending on whether the filtration is increasing or decreasing) filtration until we arrive at a cocycle which we recognize as a cohomology class in the spectral sequence's $E_1$-page.

It will be convenient to use the presentation 
\[ \mathbb{F}_p[t_{i,j}: i\geq 1, 0\leq j\leq 1]/\left( t_{i,j}^p \mbox{\ \ for\ all\ } i,j\right) \]
for 
$E^0\left(\mathbb{F}_p[\strictAut({}_1\mathbb{G}_{1/2})]^*\right)  \cong E^0\left( \mathbb{F}_p[t_1, t_2, \dots ]/\left( t_i^{p^2} - t_i\mbox{\ \ for\ all\ } i\right)\right)$, where $t_{i,j}$ is the image in the associated graded of $t_i^{p^j}$.
The coproduct on $\mathbb{F}_p[t_{i,j}: i\geq 1, 0\leq j\leq 1]/\left( t_{i,j}^p \mbox{\ \ for\ all\ } i,j\right)$, inherited from that of $\mathbb{F}_p[\strictAut(\mathbb{G}_{1/2})]^*$, is given by
\[ \Delta(t_{i,j}) = \sum_{k=0}^i t_{k,j} \otimes t_{i-k, k+j}\]
for all $i < \floor{\frac{2p}{p-1}}$; see Theorem~6.3.2 of~\cite{MR860042} for this formula.
\begin{description}
\item[$h_{10},h_{11}$] The class $h_{10}$ is represented by $t_{1,0}$ in the cobar complex $C^{\bullet}\left(E^0\left(\mathbb{F}_p[\strictAut({}_1\mathbb{G}_{1/2})]^*\right)\right)$, which lifts to $t_1$ in the cobar complex 
$C^{\bullet}\left(\mathbb{F}_p[\strictAut({}_1\mathbb{G}_{1/2})]^*\right)$.
Since $t_1$ is a coalgebra primitive, i.e., a cobar complex $1$-cocycle, 
all May differentials are zero on $h_{1,0}$.
The $C_2$-equivariance of the spectral sequence then tells us that all May 
differentials also vanish on $h_{1,0}$.
\item[$\zeta_2$] There is no nonzero class in cohomological degree $2$ and internal degree $0$ for $\zeta_2$ to hit by a May differential of any length.
\item[$h_{10}\eta_2, h_{11}\eta_2$] The cohomology class $h_{10}\eta_2$ in the Chevalley-Eilenberg complex of the Lie algebra of primitives in $E^0\left(\mathbb{F}_p[\strictAut({}_1\mathbb{G}_{1/2})]^*\right)$ (of which $\mathcal{K}(4,2)$ is a subcomplex) is represented by the $2$-cocycle $t_{1,0}\otimes t_{2,0} - t_{1,0}\otimes t_{2,1} - t_{1,0}\otimes t_{1,0}t_{1,1}$ in the cobar complex of $E^0\left(\mathbb{F}_p[\strictAut({}_1\mathbb{G}_{1/2})]^*\right)$. This $2$-cocycle lifts to the $2$-cocycle
$t_{1}\otimes t_{2} - t_{1}\otimes t_{2}^p - t_{1}\otimes t_{1}^{p+1}$ in the cobar complex of $\mathbb{F}_p[\strictAut({}_1\mathbb{G}_{1/2})]^*$.
Hence all May differentials vanish on $h_{10}\eta_2$, and by $C_2$-equivariance, also $h_{11}\eta_2$.
\end{description}
So the May differentials of all lengths vanish on the generators of the ring
$\Cotor_{E^0\mathbb{F}_p[\strictAut({}_1\mathbb{G}_{1/2})]^*}^{*,*,*}(\mathbb{F}_p,\mathbb{F}_p)$. So 
$H^*(\strictAut({}_1\mathbb{G}_{1/2}); \mathbb{F}_p) \cong \Cotor_{E^0\mathbb{F}_p[\strictAut({}_1\mathbb{G}_{1/2})]^*}^{*,*,*}(\mathbb{F}_p,\mathbb{F}_p) \cong H^{*,*,*}(\mathcal{K}(2,2))$ as a graded $\mathbb{F}_p$-vector space. 
\end{proof}

\subsection{The cohomology of $\mathcal{K}(2,3),\mathcal{K}(2,4)$, and the automorphism group scheme of a $\hat{\mathbb{Z}}_p[\sqrt{p}]$-height $2$ formal $\hat{\mathbb{Z}}_p[\sqrt{p}]$-module.}

\begin{prop}\label{coh of E 4 3 0}
Suppose $p>3$.
Then we have an isomorphism of trigraded $C_2$-equivariant $\mathbb{F}_p$-algebras
\[ H^{*,*,*}(\mathcal{K}(2,3)) \cong \mathcal{A}_{2,3}\otimes_{\mathbb{F}_p} \Lambda(\zeta_2), \]
where 
\begin{dmath*} \mathcal{A}_{2,3} \cong 
\mathbb{F}_p\{ 1,h_{10},h_{11},h_{10}h_{30}, h_{11}h_{31}, e_{40}, \eta_2e_{40}, h_{10}\eta_2h_{30}, h_{11}\eta_2h_{31}, h_{10}\eta_2h_{30}h_{31}, h_{11}\eta_2h_{30}h_{31}, h_{10}h_{11}\eta_2h_{30}h_{31}\} ,\end{dmath*}
with tridegrees and the $C_2$-action as follows: 
\begin{equation}\label{degree chart 3}
\begin{array}{llllll}
\mbox{Coh.\ class}          & \mbox{Coh.\ degree} & \mbox{Int.\ degree} & \mbox{Rav.\ degree} & \mbox{Image\ under\ } \sigma \\
\hline \\
1                           & 0                   & 0                   & 0                   & 1  \\
h_{10}                      & 1                   & 2(p-1)              & 1                   & h_{11} \\
h_{11}                      & 1                   & 2p(p-1)             & 1                   & h_{10} \\
h_{10}h_{30}                 & 2                   & 4(p-1)              & 1+p                   & h_{11}h_{31} \\
h_{11}h_{31}                 & 2                   & 4p(p-1)             & 1+p                   & h_{10}h_{30} \\
e_{40}                      & 2                   & 0                   & 1+p                   & -e_{40}\\
\eta_2e_{40}                & 3                   & 0                   & 3+p                   & \eta_2e_{40}\\
h_{10}\eta_2h_{30}           & 3                   & 4(p-1)              & 3+p                   & -h_{11}\eta_2h_{31} \\
h_{11}\eta_2h_{31}           & 3                   & 4p(p-1)             & 3+p                   & -h_{10}\eta_2h_{30} \\
h_{10}\eta_2h_{30}h_{31}     & 4                   & 2(p-1)              & 3+2p                  & h_{11}\eta_2h_{30}h_{31} \\
h_{11}\eta_2h_{30}h_{31}     & 4                   & 2p(p-1)             & 3+2p                   & h_{10}\eta_2h_{30}h_{31} \\
h_{10}h_{11}\eta_2h_{30}h_{31} & 5                   & 0                 & 4+2p                   & -h_{10}h_{11}\eta_2h_{30}h_{31} \\
\hline \\
\zeta_2                    & 1                   & 0                   & 2                   & \zeta_2 ,
    \end{array} \end{equation}
where the cup products in $\mathcal{A}_{2,3}$ are all zero aside from the Poincar\'{e} duality cup products, i.e.,
each class has the dual class such that the cup product of the two is
$h_{10}h_{11}\eta_2h_{30}h_{31}$, and the remaining cup products are all zero. The classes in table~\ref{degree chart 3} are listed in order so that the class which is $n$ lines below $1$ is, up to multiplication by a unit in $\mathbb{F}_p$, the Poincar\'{e} dual of the class which is $n$ lines above $h_{10}h_{11}\eta_2h_{30}h_{31}$.
\end{prop}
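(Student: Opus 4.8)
\emph{Plan of proof.} The plan is to continue inductively in the same spirit as the proof of Proposition~\ref{coh of K 2 2}, running the Cartan--Eilenberg spectral sequence of the extension of $C_2$-equivariant trigraded DGAs
\[ 1 \rightarrow \mathcal{K}(2,2) \rightarrow \mathcal{K}(2,3) \rightarrow \Lambda(h_{30},h_{31}) \rightarrow 1 , \]
where $\mathcal{K}(2,2)$ is the sub-DGA on $h_{1,j},h_{2,j}$. By Proposition~\ref{coh of K 2 2} its $E_1$-page is $H^{*,*,*}(\mathcal{K}(2,2))\otimes_{\mathbb{F}_p}\Lambda(h_{30},h_{31}) = \mathcal{A}_{2,2}\otimes_{\mathbb{F}_p}\Lambda(\zeta_2)\otimes_{\mathbb{F}_p}\Lambda(h_{30},h_{31})$, and the $d_1$-differential is induced by the formula $d(h_{3,j}) = h_{1,j}h_{2,j+1}+h_{2,j}h_{1,j+2}$ from Observation~\ref{def of K dga} (with $h_{1,j+2}=h_{1,j}$): a short computation gives $d_1(h_{30}) = -h_{10}\eta_2$ and $d_1(h_{31}) = h_{11}\eta_2$ (up to an overall sign depending on the sign convention for $\eta_2$). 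Since $d_1$ is $H^{*,*,*}(\mathcal{K}(2,2))$-linear and kills the non-zero-divisor $\zeta_2$, the pair $(E_1,d_1)$ is $\Lambda(\zeta_2)$ tensored with the Koszul complex of the length-two sequence $(h_{10}\eta_2,\,h_{11}\eta_2)$ on the ring $\mathcal{A}_{2,2}$.

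The first main step is to compute this Koszul homology by hand, using the fact that in $\mathcal{A}_{2,2}$ only the Poincar\'{e}-duality products are nonzero. The computation is degenerate: the ideal $(h_{10}\eta_2,h_{11}\eta_2)\mathcal{A}_{2,2}=\mathbb{F}_p\{h_{10}\eta_2,h_{11}\eta_2,h_{10}h_{11}\eta_2\}$ is itself annihilated by both $h_{10}\eta_2$ and $h_{11}\eta_2$, so that $H_0 = \mathbb{F}_p\{1,h_{10},h_{11}\}$, $H_2 = \mathbb{F}_p\{h_{10}\eta_2,h_{11}\eta_2,h_{10}h_{11}\eta_2\}\cdot h_{30}h_{31}$, and a rank count on the $24$-dimensional Koszul complex over $\mathcal{A}_{2,2}$ (on which $d_1$ has rank $6$) forces $H_1$ to be six-dimensional, spanned by $h_{10}h_{30}$, $h_{11}h_{31}$, $e_{40}:=h_{10}h_{31}-h_{11}h_{30}$, $h_{10}\eta_2 h_{30}$, $h_{11}\eta_2 h_{31}$, and the class $\eta_2 e_{40}$ (which in $H_1$ is a unit multiple of $h_{10}\eta_2 h_{31}$, equivalently of $h_{11}\eta_2 h_{30}$, the two being identified via the image of $z\,h_{30}h_{31}$ with $z=1$). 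Tensoring back the $\Lambda(\zeta_2)$ gives the asserted additive answer $\mathcal{A}_{2,3}\otimes_{\mathbb{F}_p}\Lambda(\zeta_2)$.

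Next one checks that the spectral sequence collapses at $E_2$. The only possible further differential is $d_2\colon E_2^2\to E_2^0$, and it vanishes because each of the classes above is represented by an honest cocycle already in $\mathcal{K}(2,3)$: $d(h_{10}h_{31}-h_{11}h_{30})=0$, $d(h_{10}h_{30}) = h_{10}^2\eta_2 = 0$, and $d(m\,h_{30}h_{31})=0$ for $m\in\{h_{10}\eta_2,h_{11}\eta_2,h_{10}h_{11}\eta_2\}$, since already on the cochain level such $m$ multiplies $d(h_{30})$ and $d(h_{31})$ to zero. (The same remark rules out multiplicative extensions in the spectral sequence.) With explicit cocycle representatives in hand the tridegrees are read off at once: cohomological degree by counting generators, internal degree from $\| h_{i,j}\| = 2p^j(p^i-1)$ reduced modulo $2(p^2-1)$, and Ravenel degree from Ravenel's recursion, which gives $d_{2,1}=1$, $d_{2,2}=2$, and $d_{2,3}=\max\{3,p\}=p$ when $p>3$ --- this is where the hypothesis $p>3$ is used, together with $2\neq 0$ in $\mathbb{F}_p$. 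The $C_2$-action is computed from $\sigma(h_{i,j})=h_{i,j+1}$, hence $\sigma\zeta_2=\zeta_2$, $\sigma\eta_2=-\eta_2$, $\sigma h_{10}=h_{11}$, $\sigma h_{30}=h_{31}$, applied line by line to the table.

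Finally, the ring structure is verified directly in $\mathcal{K}(2,3)$ using the explicit cocycle representatives, not through the spectral sequence. The overwhelming majority of products of generators vanish simply because the degree chart shows $\mathcal{A}_{2,3}\otimes\Lambda(\zeta_2)$ has no basis element in the relevant tridegree; the surviving products are exactly the Poincar\'{e}-duality ones. The one product that requires an honest computation is $e_{40}\cdot(\eta_2 e_{40})=\eta_2 e_{40}^2 = \mp 2\,h_{10}h_{11}\eta_2 h_{30}h_{31}$, together with the observation that $e_{40}^2 = \mp 2\,h_{10}h_{11}h_{30}h_{31}$ is a coboundary in $\mathcal{K}(2,3)$, so that $e_{40}^2=0$ while $e_{40}\cdot(\eta_2 e_{40})\neq 0$. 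I expect the real obstacle to be not any single hard step but the cumulative sign- and bookkeeping work: pinning down the correct six-element basis of $H_1$ together with its naming, and verifying that all the chosen representatives are genuine cocycles of $\mathcal{K}(2,3)$ --- which is what simultaneously kills $d_2$ and the multiplicative extensions.
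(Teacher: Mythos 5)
Your proposal is correct and takes essentially the same route as the paper: the Cartan--Eilenberg spectral sequence of the extension $1\to\mathcal{K}(2,2)\to\mathcal{K}(2,3)\to\Lambda(h_{30},h_{31})\to 1$ with $d(h_{30})=-h_{10}\eta_2$ and $d(h_{31})=h_{11}\eta_2$, the paper simply declaring the remaining extraction ``routine'' where you spell out the Koszul-homology count, the collapse at $E_2$, and the products. The only quibble is cosmetic: $\zeta_2$ is an exterior class and hence a zero-divisor in $H^{*,*,*}(\mathcal{K}(2,2))$, so the splitting off of the tensor factor $\Lambda(\zeta_2)$ should be justified by $\Lambda(\zeta_2)$-linearity of $d_1$ (whose values lie in $\mathcal{A}_{2,2}$) rather than by any regularity of $\zeta_2$.
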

\begin{proof}
We have the extension of Lie algebras
\[ 1 \rightarrow \mathbb{F}_p\{ x_{30},x_{31}\} \rightarrow L(2,3) \rightarrow L(2,2) \rightarrow 1 \]
and
we take their Chevalley-Eilenberg DGAs, then compute the Cartan-Eilenberg spectral sequence for the extension of $C_2$-equivariant trigraded DGAs:
\[ 1 \rightarrow \mathcal{K}(2,2) \rightarrow
  \mathcal{K}(2,3)\rightarrow
  \Lambda(h_{30},h_{31}) \rightarrow 1.\]
We have the differentials
\begin{eqnarray*} dh_{30} & = & -h_{10}\eta_2, \\
 dh_{31} & = & h_{11}\eta_2,\\
 d(h_{30}h_{31}) & = & -h_{10}\eta_2h_{31} - h_{11}\eta_2h_{30}.
\end{eqnarray*}
and their products with classes in $H^{*,*,*}(\mathcal{K}(2,2))$.
The nonzero products are
\begin{eqnarray*} 
 d(h_{11}h_{30}) & = & -h_{10}h_{11}\eta_2 \\
 d(h_{10}h_{31}) & = & -h_{10}h_{11}\eta_2 \\
 d(h_{10}h_{30}h_{31}) & = & h_{10}h_{11}\eta_2h_{30} \\
 d(h_{11}h_{30}h_{31}) & = & -h_{10}h_{11}\eta_2h_{31}. 
\end{eqnarray*}
We write $e_{40}$ for the cocycle $h_{10}h_{31} - h_{11}h_{30}$.
Extracting the output of the spectral sequence from knowledge of the differentials is routine. 
\end{proof}

\begin{prop}\label{coh of E 4 4 0}
Suppose $p>3$.
Then we have an isomorphism of trigraded $C_2$-equivariant $\mathbb{F}_p$-algebras
\[ H^{*,*,*}(\mathcal{K}(2,4)) \cong \mathcal{A}_{2,4} \otimes_{\mathbb{F}_p} \Lambda(\zeta_2,\zeta_4), \]
where 
\begin{dmath*}
\mathcal{A}_{2,4} \cong \mathbb{F}_p\{ 1,h_{10},h_{11},h_{10}h_{30}, h_{11}h_{31}, h_{10}\eta_4-\eta_2h_{30}, h_{11}\eta_4-\eta_2h_{31}, \eta_2e_{40}, h_{10}\eta_2h_{30}, h_{11}\eta_2h_{31}, h_{10}h_{30}\eta_4, h_{11}h_{31}\eta_4, \eta_4e_{40}+4\eta_2h_{30}h_{31}, h_{10}\eta_2h_{30}h_{31}, h_{11}\eta_2h_{30}h_{31}, h_{10}\eta_2h_{30}\eta_4, h_{11}\eta_2h_{31}\eta_4, h_{10}\eta_2h_{30}h_{31}\eta_4, h_{11}\eta_2h_{30}h_{31}\eta_4, h_{10}h_{11}\eta_2h_{30}h_{31}\eta_4\} ,\end{dmath*}
with tridegrees and the $C_2$-action as follows: 
\begin{equation}\label{degree chart 4}
\begin{array}{llllll}
\mbox{Coh.\ class}          & \mbox{Coh.\ degree} & \mbox{Int.\ degree} & \mbox{Rav.\ degree} & \mbox{Image\ under\ } \sigma \\
\hline \\
1                                 & 0                   & 0                   & 0                   & 1  \\
h_{10}                            & 1                   & 2(p-1)              & 1                   & h_{11} \\
h_{11}                            & 1                   & 2p(p-1)             & 1                   & h_{10} \\
h_{10}h_{30}                       & 2                   & 4(p-1)              & 1+p                   & h_{11}h_{31} \\
h_{11}h_{31}                       & 2                   & 4p(p-1)             & 1+p                   & h_{10}h_{30} \\
h_{10}\eta_4-\eta_2h_{30}          & 2                   & 2(p-1)              & 1+2p                 & -h_{11}\eta_4+\eta_2h_{30} \\
h_{11}\eta_4-\eta_2h_{31}          & 2                   & 2p(p-1)             & 1+2p                 & -h_{10}\eta_4+\eta_2h_{31} \\
\eta_2e_{40}                      & 3                   & 0                   & 3+p                   & \eta_2e_{40}\\
h_{10}\eta_2h_{30}                 & 3                   & 4(p-1)              & 3+p                   & -h_{11}\eta_2h_{31} \\
h_{11}\eta_2h_{31}                 & 3                   & 4p(p-1)             & 3+p                   & -h_{10}\eta_2h_{30} \\
h_{10}h_{30}\eta_4                 & 3                   & 4(p-1)              & 1+3p                  & -h_{11}h_{31}\eta_4 \\
h_{11}h_{31}\eta_4                 & 3                   & 4p(p-1)             & 1+3p                  & -h_{10}h_{30}\eta_4 \\
\eta_4e_{40}+4\eta_2h_{30}h_{31}   & 3                   & 0                   & 1+3p                   & \eta_4e_{40}+4\eta_2h_{30}h_{31}\\
h_{10}\eta_2h_{30}h_{31}           & 4                   & 2(p-1)              & 3+2p                  & h_{11}\eta_2h_{30}h_{31} \\
h_{11}\eta_2h_{30}h_{31}           & 4                   & 2p(p-1)             & 3+2p                   & h_{10}\eta_2h_{30}h_{31} \\
h_{10}\eta_2h_{30}\eta_4           & 4                   & 4(p-1)              & 3+3p                  & h_{11}\eta_2h_{31}\eta_4 \\
h_{11}\eta_2h_{31}\eta_4           & 4                   & 4p(p-1)             & 3+3p                  & h_{10}\eta_2h_{30}\eta_4 \\
h_{10}\eta_2h_{30}h_{31}\eta_4      & 5                   & 2(p-1)              & 3+4p                 & -h_{11}\eta_2h_{30}h_{31}\eta_4 \\
h_{11}\eta_2h_{30}h_{31}\eta_4      & 5                   & 2p(p-1)             & 3+4p                 & -h_{10}\eta_2h_{30}h_{31}\eta_4 \\
h_{10}h_{11}\eta_2h_{30}h_{31}\eta_4 & 6                   & 0                  & 4+4p                 & h_{10}h_{11}\eta_2h_{30}h_{31}\eta_4 \\
\hline \\
\zeta_2                          & 1                   & 0                   & 2                   & \zeta_2 \\
\zeta_4                          & 1                   & 0                   & 2p                  & \zeta_4 .
    \end{array} \end{equation}
The classes in table~\ref{degree chart 4} are listed in order so that the class which is $n$ lines below $1$ is, up to multiplication by a unit in $\mathbb{F}_p$, the Poincar\'{e} dual of the class which is $n$ lines above $h_{10}h_{11}\eta_2h_{30}h_{31}\eta_4$.
\end{prop}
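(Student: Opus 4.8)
The plan is to run the same kind of argument as in the proofs of Propositions~\ref{coh of K 2 2} and~\ref{coh of E 4 3 0}, realizing $\mathcal{K}(2,4)$ as $\mathcal{K}(2,3)$ with the two exterior generators $h_{40},h_{41}$ adjoined. The surjection of restricted Lie algebras $L(2,4)\to L(2,3)$ has kernel the abelian Lie algebra $\mathbb{F}_p\{x_{40},x_{41}\}$, so passing to Chevalley-Eilenberg DGAs gives a short exact sequence of $C_2$-equivariant trigraded DGAs
\[ 1 \to \mathcal{K}(2,3) \to \mathcal{K}(2,4) \to \Lambda(h_{40},h_{41}) \to 1, \]
in which the differential on the quotient is zero. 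I would then compute the associated Cartan-Eilenberg spectral sequence; filtering by the $\Lambda(h_{40},h_{41})$-degree, which takes only the values $0,1,2$, this spectral sequence has at most two nonzero differentials, namely the transgression and one secondary differential, and its starting page is $H^{*,*,*}(\mathcal{K}(2,3))\otimes_{\mathbb{F}_p}\Lambda(h_{40},h_{41}) \cong \mathcal{A}_{2,3}\otimes_{\mathbb{F}_p}\Lambda(\zeta_2)\otimes_{\mathbb{F}_p}\Lambda(h_{40},h_{41})$ by Proposition~\ref{coh of E 4 3 0}.

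From the formula $d(h_{i,j})=\sum_{k=1}^{i-1}h_{k,j}h_{i-k,j+k}$ (indices read modulo $2$) one computes $d(h_{40})=h_{10}h_{31}-h_{11}h_{30}$, which is, up to a sign, the cocycle $e_{40}$ of Proposition~\ref{coh of E 4 3 0}; so the transgression sends $h_{40}$ to $\pm e_{40}$, and by $C_2$-equivariance (using $\sigma h_{40}=h_{41}$ and $\sigma e_{40}=-e_{40}$) it sends $h_{41}$ to $\mp e_{40}$. Thus the symmetric combination $\zeta_4:=h_{40}+h_{41}$ is already a cocycle of $\mathcal{K}(2,4)$, while the antisymmetric combination $\eta_4:=h_{40}-h_{41}$ transgresses to $\pm 2 e_{40}$ --- and here $p>3$ enters, since then $2$ is a unit. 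The transgression is a derivation vanishing on the image of $H^{*,*,*}(\mathcal{K}(2,3))$, hence is now determined, and computing the next page amounts to understanding multiplication by $e_{40}$ on $H^{*,*,*}(\mathcal{K}(2,3))$. Using the ``only Poincar\'e-duality products are nonzero'' clause of Proposition~\ref{coh of E 4 3 0} (and cochain-level facts such as $e_{40}^2=-2h_{10}h_{11}h_{30}h_{31}$, which is a coboundary in $\mathcal{K}(2,3)$, so $e_{40}^2=0$ in cohomology) one finds that $e_{40}\cdot H^{*,*,*}(\mathcal{K}(2,3))$ dies while $\zeta_4$ together with $\eta_4$ times each class annihilated by $e_{40}$ survives, so the page after the transgression is the multigraded $\mathbb{F}_p$-vector space underlying $\mathcal{A}_{2,4}\otimes_{\mathbb{F}_p}\Lambda(\zeta_2,\zeta_4)$, as tabulated in table~\ref{degree chart 4}.

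It remains to kill the secondary differential, i.e.\ to show the spectral sequence collapses there; this holds because every surviving class lifts to a genuine cocycle of $\mathcal{K}(2,4)$. Indeed $\zeta_2$ and $\zeta_4$ are honest cocycles, the classes surviving from $\mathcal{K}(2,3)$ are cocycles there, and for a cocycle $x$ of $\mathcal{K}(2,3)$ with $e_{40}x=dy$ one checks --- using $dh_{30}=\mp h_{10}\eta_2$, $dh_{31}=\pm h_{11}\eta_2$, $d\eta_2=\mp 2h_{10}h_{11}$, $\eta_2^2=0$, and $d(h_{30}h_{31})=-h_{10}\eta_2h_{31}-h_{11}\eta_2h_{30}$ from Proposition~\ref{coh of E 4 3 0} --- that $\eta_4 x\mp 2y$ and $h_{40}h_{41}x\pm\zeta_4 y$ are cocycles; this is exactly where the listed representatives $h_{10}\eta_4-\eta_2h_{30}$, $h_{11}\eta_4-\eta_2h_{31}$, $\eta_4 e_{40}+4\eta_2h_{30}h_{31}$, and so on, come from, their correction terms being the $\mp 2y$'s. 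Having $H^{*,*,*}(\mathcal{K}(2,4))$ as a multigraded vector space, I would finish by recording: the cohomological degrees directly; the internal degree of $h_{i,j}$, which is $2p^j(p^i-1)$ reduced mod $2(p^2-1)$ (so $h_{40},h_{41},\zeta_4$ have internal degree $0$); the Ravenel degrees, inherited from the Ravenel filtration (so $h_{40},h_{41},\zeta_4$ lie in Ravenel degree $d_{2,4}=2p$ for $p>3$), additively in the leading term of each class; and the $C_2$-action via $\sigma(h_{i,j})=h_{i,j+1}$. The $\Lambda(\zeta_2,\zeta_4)$ tensor factor appears because $\zeta_2,\zeta_4$ are honest cocycles with trivial mutual products, and the claim that only Poincar\'e-duality cup products are nonzero in $\mathcal{A}_{2,4}$ follows from Poincar\'e duality for the Chevalley-Eilenberg cohomology of the nilpotent Lie algebra $L(2,4)$ (concentrated in degrees $0,\dots,8$) together with a direct check that the few ``obvious'' non-duality products vanish on cochains or are coboundaries --- leaving only exotic jumps in the Ravenel filtration of products undetermined, exactly as in Proposition~\ref{cohomology of ht 2 morava stab grp}.

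There is no conceptually hard step; the main obstacle is the volume of bookkeeping --- pinning down every sign in the Chevalley-Eilenberg differential and in the transgression, recognizing that several cochain-level products (e.g.\ $e_{40}^2$ and $h_{10}h_{11}h_{30}$) are in fact coboundaries, and producing the correct cocycle representatives, complete with their correction terms, for all twenty algebra generators of $\mathcal{A}_{2,4}$.
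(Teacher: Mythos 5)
Your overall strategy is exactly the paper's: form the extension of $C_2$-equivariant trigraded DGAs $1\to\mathcal{K}(2,3)\to\mathcal{K}(2,4)\to\Lambda(h_{40},h_{41})\to 1$ coming from $1\to\mathbb{F}_p\{x_{40},x_{41}\}\to L(2,4)\to L(2,3)\to 1$, change basis to $\zeta_4=h_{40}+h_{41}$ and $\eta_4=h_{40}-h_{41}$, observe that $\zeta_4$ is a cocycle while $\eta_4$ transgresses to a unit multiple of $e_{40}$, run the Cartan--Eilenberg spectral sequence using the known multiplication-by-$e_{40}$ on $H^{*,*,*}(\mathcal{K}(2,3))$ (including the secondary/product differential hitting $h_{10}h_{11}\eta_2h_{30}h_{31}$), and then repair the surviving $E_\infty$-classes $h_{10}\eta_4$, $h_{11}\eta_4$, $\eta_4e_{40}$ by adding correction terms of lower Cartan--Eilenberg filtration to obtain genuine cocycles. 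Your bookkeeping of the transgression ($d\eta_4=\pm 2e_{40}$, versus the paper's $e_{40}$; immaterial since $p>2$) and of $e_{40}^2=-2h_{10}h_{11}h_{30}h_{31}$ at the cochain level is consistent with the paper's differentials, and your argument that all surviving classes lift to honest cocycles correctly disposes of higher differentials.

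The one genuine error is in your final paragraph: the assertion that ``only Poincar\'e-duality cup products are nonzero in $\mathcal{A}_{2,4}$'' is false, and it is not part of the statement being proved (the proposition only asserts the Poincar\'e pairing between the $n$th line from the top and the $n$th line from the bottom, not the vanishing of all other products). Indeed the paper's own proof ends by pointing out the counterexample $h_{10}\cdot(h_{10}\eta_4-\eta_2h_{30})=-h_{10}\eta_2h_{30}$, which is a nonzero product of classes in cohomological degrees $1$ and $2$ landing in degree $3$ --- precisely the sort of product Poincar\'e duality for the unimodular Lie algebra $L(2,4)$ says nothing about, since duality only controls the pairing into the top degree. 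This is a consequence of the correction terms you yourself construct: once $\eta_2h_{30}$ is absorbed into the cocycle representative of $h_{10}\eta_4$, products with it are forced to be nonzero. Delete that claim (and the attempted justification of it) and the rest of your argument stands.
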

\begin{proof}
We
compute the Cartan-Eilenberg spectral sequence for the extension of $C_2$-equivariant trigraded DGAs
\[ 1 \rightarrow \mathcal{K}(2,3) \rightarrow
  \mathcal{K}(2,4)\rightarrow
  \Lambda(h_{40},h_{41}) \rightarrow 1\]
arising from the extension of Lie algebras
\[ 1 \rightarrow \mathbb{F}_p\{ x_{40},x_{41}\} \rightarrow L(2,4) \rightarrow L(2,3) \rightarrow 1 .\]
A change of $\mathbb{F}_p$-linear basis is convenient here: we will write $\zeta_4$ for the element $h_{40} + h_{41}\in \Lambda(h_{40},h_{41})$, and we will write $\eta_4$ for $h_{40}-h_{41}$.
We have the differentials
\begin{eqnarray*} d\zeta_{4} & = & 0, \\
 d\eta_4 & = & h_{10}h_{31} + h_{30}h_{11} = e_{40},
\end{eqnarray*}
and a nonzero product with a class in $H^{*,*,*}(\mathcal{K}(2,3))$,
\begin{eqnarray*} 
 d(\eta_2e_{40}\eta_4) & = & h_{10}h_{11}\eta_2h_{30}h_{31}.
\end{eqnarray*}
Extracting the output of the spectral sequence from knowledge of the differentials is routine.
The three classes $h_{10}\eta_4, h_{11}\eta_4, \eta_4e_{40}$ in the $E_{\infty}$-term are not cocycles in $H^{*,*,*}(\mathcal{K}(2,4))$; adding
terms of lower Cartan-Eilenberg filtration to get cocycles yields the cohomology classes
$h_{10}\eta_4-\eta_2h_{30}, h_{11}\eta_4-\eta_2h_{31}, \eta_4e_{40}+4\eta_2h_{30}h_{31}$.
Note that this implies that there are nonzero multiplications in $\mathcal{A}_{2,4}$ other than those between each class and its Poincar\'{e} dual; for example, $h_{10}(h_{10}\eta_4 - \eta_{2}h_{30}) = -h_{10}\eta_2h_{30}$. 
\end{proof}

\begin{theorem}\label{coh of ht 2 fm}
Suppose $p>5$. Then the cohomology $H^*(\strictAut({}_1\mathbb{G}_{1/2}^{\hat{\mathbb{Z}}_p\left[\sqrt{p}\right]}); \mathbb{F}_p)$ of the
strict automorphism of the $\hat{\mathbb{Z}}_p\left[\sqrt{p}\right]$-height $2$ formal $\hat{\mathbb{Z}}_p\left[\sqrt{p}\right]$-module ${}_1\mathbb{G}^{\hat{\mathbb{Z}}_p\left[\sqrt{p}\right]}_{1/2}$ is isomorphic, as a graded $\mathbb{F}_p$-vector space, to
\[ H^{*,*,*}(\mathcal{K}(2,4)) \cong\mathcal{A}_{2,4} \otimes_{\mathbb{F}_p} \Lambda(\zeta_2,\zeta_4), \]
from Proposition~\ref{coh of E 4 4 0}.
The cohomological grading on $H^*(\strictAut({}_1\mathbb{G}_{1/2}^{\hat{\mathbb{Z}}_p\left[\sqrt{p}\right]}); \mathbb{F}_p)$ corresponds to the cohomological grading on $H^{*,*,*}(\mathcal{K}(2,4))$.

The multiplication on $H^*(\strictAut({}_1\mathbb{G}_{1/2}^{\hat{\mathbb{Z}}_p\left[\sqrt{p}\right]}); \mathbb{F}_p)$ furthermore agrees with the multiplication on $H^{*,*,*}(\mathcal{K}(2,4))$, modulo the question of exotic multiplicative extensions, i.e., jumps in Ravenel filtration in the products of elements in $H^*(\strictAut({}_1\mathbb{G}_{1/2}^{\hat{\mathbb{Z}}_p\left[\sqrt{p}\right]}); \mathbb{F}_p)$. 

In particular, the Poincar\'{e} series expressing the $\mathbb{F}_p$-vector space dimensions of the grading degrees in $H^*(\strictAut({}_1\mathbb{G}_{1/2}^{\hat{\mathbb{Z}}_p\left[\sqrt{p}\right]}); \mathbb{F}_p)$ is \[ (1+s)^2\left( 1 + 2s + 4s^2 + 6s^3 + 4s^4 + 2s^5 + s^6\right).\]
\end{theorem}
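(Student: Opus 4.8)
The plan is to mimic the two-step argument in the proof of Proposition~\ref{cohomology of ht 2 morava stab grp}, replacing $\mathcal{K}(2,2)$ by $\mathcal{K}(2,4)$ throughout. Here $A = \hat{\mathbb{Z}}_p[\sqrt{p}]$ has $d = e = 2$ and $f = 1$, the underlying $\hat{\mathbb{Z}}_p$-height is $dn = 4$, and since $\frac{\pi^e}{p} = \frac{(\sqrt{p})^2}{p} = 1$ we may take $\omega = 1$, so that everything takes place over $\mathbb{F}_p$. First I would note that the continuous mod-$p$ cohomology of $\strictAut({}_1\mathbb{G}_{1/2}^{\hat{\mathbb{Z}}_p[\sqrt{p}]})$ is $\Cotor$ over the dual Hopf algebra $\mathbb{F}_p[\strictAut({}_1\mathbb{G}_{1/2}^{\hat{\mathbb{Z}}_p[\sqrt{p}]})]^* \cong \mathbb{F}_p[t_1,t_2,\dots]/(t_i^{p^2}-t_i)$. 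Equipping this Hopf algebra with the Ravenel filtration of Proposition~\ref{assoc graded with complex mult} and identifying the associated graded with the $\mathbb{F}_p$-linear dual of the restricted enveloping algebra of $L^A_1(2)$, via Theorem~\ref{lie bracket computation}, yields a May spectral sequence (the analogue, for this Hopf algebra, of spectral sequence~\ref{may 2 ss})
\[ E_1 \cong H^{*,*,*}_{res}(L^A_1(2); \mathbb{F}_p) \Longrightarrow H^*(\strictAut({}_1\mathbb{G}_{1/2}^{\hat{\mathbb{Z}}_p[\sqrt{p}]}); \mathbb{F}_p). \]

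To compute the $E_1$-term I would run the reduced Lie-May spectral sequence of Theorem~\ref{main thm on lie-may spect seqs for formal modules}. The hypothesis $p > 5$ enters precisely here: it is equivalent to $\frac{dn}{p-1} = \frac{4}{p-1} < 1$, which makes the polynomial part $\mathbb{F}_p[b_{i,j}]$ trivial, and it also gives $\floor{\frac{pdn}{p-1}} = \floor{\frac{4p}{p-1}} = 4$. Consequently the reduced Lie-May spectral sequence collapses immediately, and --- using $p = \pi^e$ for the tensor splitting that identifies $H^{*,*,*}_{res}(L^A_1(2))$ with $H^{*,*,*}_{res}(L^A_1(2,4))$ --- we obtain
\[ H^{*,*,*}_{res}(L^A_1(2); \mathbb{F}_p) \cong H^{*,*,*}_{unr}(L^A_1(2,4); \mathbb{F}_p) \cong H^{*,*,*}(\mathcal{K}(2,4)) = \mathcal{A}_{2,4}\otimes_{\mathbb{F}_p}\Lambda(\zeta_2,\zeta_4), \]
the ring computed in Proposition~\ref{coh of E 4 4 0}.

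The remaining and principal step is to show that the May spectral sequence above collapses at $E_1$. Since the differentials are derivations, it is enough to check that each generator of the $\mathbb{F}_p$-algebra $H^{*,*,*}(\mathcal{K}(2,4))$ is a permanent cycle, and I would do this exactly as in Proposition~\ref{cohomology of ht 2 morava stab grp}: lift a cobar representative of each generator to the cobar complex of $\mathbb{F}_p[t_1,t_2,\dots]/(t_i^{p^2}-t_i)$ with the coproduct inherited from $BP_*BP$, and verify that the lift is a cocycle there, correcting by cochains of higher Ravenel filtration when necessary. The classes $h_{10}$ and $h_{11}$ are represented by the coalgebra primitives $t_1$ and $t_1^p$; the classes $\zeta_2,\zeta_4 \in H^1$ (both in internal degree $0$) are permanent cycles --- for example $\zeta_2$ is represented by the cobar cocycle $[t_2]+[t_2^p]-[t_1^{p+1}]$, and $\zeta_4$ similarly. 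The $C_2$-equivariance of the whole setup (pairing $h_{10}\leftrightarrow h_{11}$, $h_{30}\leftrightarrow h_{31}$, and so on), together with the sparseness of the internal grading, disposes of most potential differentials for free, leaving only a short list of genuinely new algebra generators --- essentially $h_{10}h_{30}$, $h_{10}\eta_4-\eta_2h_{30}$, $\eta_2 e_{40}$, and $\eta_4 e_{40}+4\eta_2h_{30}h_{31}$, together with their $C_2$-conjugates --- each of which is handled by an explicit cobar computation of the same flavor as the verification, in Proposition~\ref{cohomology of ht 2 morava stab grp}, that $h_{10}\eta_2$ is a permanent cycle. I expect this cobar bookkeeping to be the only genuine obstacle; the rest is formal.

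Granting the collapse, $E_1 = E_\infty$ yields the isomorphism $H^*(\strictAut({}_1\mathbb{G}_{1/2}^{\hat{\mathbb{Z}}_p[\sqrt{p}]}); \mathbb{F}_p) \cong H^{*,*,*}(\mathcal{K}(2,4))$ of graded $\mathbb{F}_p$-vector spaces; the agreement of the multiplications follows from multiplicativity of the May spectral sequence, modulo the exotic multiplicative extensions (jumps in Ravenel filtration) which, as in Proposition~\ref{cohomology of ht 2 morava stab grp}, we do not attempt to rule out. The Poincar\'e series is then read off from Proposition~\ref{coh of E 4 4 0}: the twenty basis elements of $\mathcal{A}_{2,4}$ lie in cohomological degrees $0,1,\dots,6$ with multiplicities $1,2,4,6,4,2,1$, so $\mathcal{A}_{2,4}$ has Poincar\'e series $1+2s+4s^2+6s^3+4s^4+2s^5+s^6$, and $\Lambda(\zeta_2,\zeta_4)$ contributes the factor $(1+s)^2$.
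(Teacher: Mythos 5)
Your overall strategy is the same as the paper's: for $p>5$ the Lie--May spectral sequence of Theorem~\ref{main thm on lie-may spect seqs for formal modules} collapses (since $\floor{\frac{4}{p-1}}=0$), identifying the $E_1$-page of the May spectral sequence with $H^{*,*,*}(\mathcal{K}(2,4))\cong\mathcal{A}_{2,4}\otimes_{\mathbb{F}_p}\Lambda(\zeta_2,\zeta_4)$, and the remaining work is to show that the May spectral sequence collapses, which both you and the paper do by a mixture of explicit cobar-complex lifts, internal/Ravenel degree considerations, and $C_2$-equivariance. The setup, the role of $p>5$, and the final bookkeeping of the Poincar\'e series are all as in the paper.

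The one genuine gap is in how you finish the collapse argument. You assert that it suffices to check the generators $h_{10}$, $h_{11}$, $\zeta_2$, $\zeta_4$, $h_{10}h_{30}$, $h_{10}\eta_4-\eta_2h_{30}$, $\eta_2e_{40}$, $\eta_4e_{40}+4\eta_2h_{30}h_{31}$ and their conjugates, but $\mathcal{A}_{2,4}$ is essentially a Poincar\'e duality algebra whose products are mostly zero, so it is \emph{not} generated by these classes. For example, $h_{10}h_{30}\eta_4$ (cohomological degree $3$, internal degree $4(p-1)$, Ravenel degree $1+3p$) is not a product of classes on your list: the only candidate product in that cohomological and internal degree is $h_{10}\cdot(h_{10}\eta_4-\eta_2h_{30})=-h_{10}\eta_2h_{30}$, which lives in Ravenel degree $3+p$; similar degree checks show that the degree-$4$ and degree-$5$ classes such as $h_{10}\eta_2h_{30}h_{31}$ and $h_{10}\eta_2h_{30}\eta_4$ are likewise indecomposable. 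So the Leibniz rule applied to your generator list does not dispose of roughly the top half of the table in Proposition~\ref{coh of E 4 4 0}. The paper closes this by a Poincar\'e duality induction: one verifies by hand that $d_r$ vanishes on at least one member of each dual pair and on the duality class $h_{10}h_{11}\eta_2h_{30}h_{31}\eta_4$, which is the product $\eta_2e_{40}\cdot(\eta_4e_{40}+4\eta_2h_{30}h_{31})$ of two classes already checked; then the Leibniz rule applied to each pairing $x\cdot y=(\text{duality class})$ forces $d_r$ to vanish on the other member of each pair, and induction on $r$ finishes the collapse. As written, your proposal needs either that duality argument or additional explicit verifications (cobar lifts or degree arguments) for each of the remaining indecomposable classes.
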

\begin{proof}
The Lie-May spectral sequence of Theorem~\ref{main thm on lie-may spect seqs for formal modules} collapses immediately, since $p>5$ implies that
$1> \floor{\frac{4}{p-1}}$.
Hence $\Cotor_{E^0\mathbb{F}_p[\strictAut({}_1\mathbb{G}^{\hat{\mathbb{Z}}_p\left[\sqrt{p}\right]}_{1/2})]^*}^{*,*,*}(\mathbb{F}_p,\mathbb{F}_p) \cong H^{*,*,*}(\mathcal{K}(2,4))$.

We now run the May spectral sequence 
\[ E_1^{*,*,*} \cong \Cotor_{E^0\mathbb{F}_p[\strictAut({}_1\mathbb{G}^{\hat{\mathbb{Z}}_p\left[\sqrt{p}\right]}_{1/2})]^*}^{*,*,*}(\mathbb{F}_p,\mathbb{F}_p) \Rightarrow H^*(\strictAut({}_1\mathbb{G}^{\hat{\mathbb{Z}}_p\left[\sqrt{p}\right]}_{1/2}); \mathbb{F}_p).\] 
See the proof of Proposition~\ref{cohomology of ht 2 morava stab grp} for the general method we use.
It will be convenient to use the presentation 
\[ \mathbb{F}_p[t_{i,j}: i\geq 1, 0\leq j\leq 1]/\left( t_{i,j}^p \mbox{\ \ for\ all\ } i,j\right) \]
for 
$E^0\left(\mathbb{F}_p[\strictAut({}_1\mathbb{G}^{\hat{\mathbb{Z}}_p\left[\sqrt{p}\right]}_{1/2})]^*\right)  \cong E^0\left( \mathbb{F}_p[t_1, t_2, \dots ]/\left( t_i^{p^2} - t_i\mbox{\ \ for\ all\ } i\right)\right)$, where $t_{i,j}$ is the image of $t_i^{p^j}$ in the associated graded.
The coproduct on $\mathbb{F}_p[t_{i,j}: i\geq 1, 0\leq j\leq 1]/\left( t_{i,j}^p \mbox{\ \ for\ all\ } i,j\right)$, inherited from that of $\mathbb{F}_p[\strictAut({}_1\mathbb{G}^{\hat{\mathbb{Z}}_p\left[\sqrt{p}\right]}_{1/2})]^*$, is given by
\[ \Delta(t_{i,j}) = \sum_{k=0}^i t_{k,j} \otimes t_{i-k, k+j}\]
for all $i < \floor{\frac{4p}{p-1}}$; reduce the $n=4$ case of 
Theorem~6.3.2 of~\cite{MR860042} 
modulo the ideal generated by $t_i^{p^2} - t_i$, for all $i$, to arrive at this formula.
\begin{description}
\item[$h_{10},h_{11}, \zeta_2$] There are no nonzero May differentials of any length on these classes, by the same computation as in the proof of Proposition~\ref{cohomology of ht 2 morava stab grp}.
\item[$h_{10}h_{30}, h_{11}h_{31}$] The class $h_{10}h_{30}$ is represented by 
the $2$-cocycle \[ t_{1,0}\otimes t_{3,0} - t_{1,0}\otimes t_{1,0}t_{2,0} - \frac{1}{2} t_{1,0}^2\otimes t_{2,0} + \frac{1}{2} t_{1,0}^2 \otimes t_{2,1} - \frac{1}{2} t_{1,0}^2 \otimes t_{1,0}t_{1,1} - \frac{1}{3} t_{1,0}^3\otimes t_{1,1}\]
in the cobar complex $C^{\bullet}\left(E^0\left(\mathbb{F}_p[\strictAut({}_1\mathbb{G}^{\hat{\mathbb{Z}}_p\left[ \sqrt{p}\right]}_{1/2})]^*\right)\right)$, 
which lifts to the $2$-cochain
\[ t_{1}\otimes t_{3} - t_{1}\otimes t_{1}t_{2} - \frac{1}{2} t_{1}^2\otimes t_{2} + \frac{1}{2} t_{1}^2 \otimes t_{2}^p - \frac{1}{2} t_{1}^2 \otimes t_{1}^{p+1} - \frac{1}{3} t_{1,0}^3\otimes t_{1}^p\]
in the cobar complex $C^{\bullet}\left(\mathbb{F}_p[\strictAut({}_1\mathbb{G}_{1/2})]^*\right)$.
Since this $2$-cochain is also a $2$-cocycle, all May differentials vanish on $h_{1,0}h_{3,0}$. The $C_2$-equivariance of the spectral sequence then tells us that all May 
differentials also vanish on $h_{11}h_{31}$.
\item[$h_{10}\eta_4, h_{11}\eta_4$] The only elements of internal degree $2(p-1)$ and cohomological degree $3$ are scalar multiples of $h_{10}\zeta_2\zeta_4$, but $h_{10}\zeta_2\zeta_4$ is of higher Ravenel degree than $h_{10}\eta_4$. Hence $h_{10}\eta_4$ cannot support a May differential of any length. By $C_2$-equivariance, the same is true of $h_{11}\eta_4$.
\item[$\eta_2e_{40}$] The only elements of internal degree $0$ and cohomological degree $4$ are $\mathbb{F}_p$-linear combinations of 
$\zeta_2\eta_2e_{40}, \zeta_4\eta_2e_{40}, \zeta_2\eta_4e_{40},$ and $\zeta_4\eta_4e_{40},$ but all four of these elements have higher Ravenel degree than $\eta_2e_{40}$, so again $\eta_2e_{40}$ cannot support a May differential of any length.
\item[$h_{10}\eta_2h_{30},h_{11}\eta_2h_{31},\eta_4e_{40}$] Similar degree considerations eliminate the possibility of nonzero May differentials on these classes.
\item[$\zeta_4$]
The class $\zeta_4$ is represented by 
the $1$-cocycle \[ t_{4,0} + t_{4,1} - t_{1,0}t_{3,1} - t_{1,1}t_{3,0} - \frac{1}{2}t_{2,0}^2 - \frac{1}{2}t_{2,1}^2 + t_{1,0}t_{1,1}t_{2,0} + t_{1,0}t_{1,1}t_{2,1}- \frac{1}{2}t_{1,0}^2t_{1,1}^2,\]
in the cobar complex $C^{\bullet}\left(E^0\left(\mathbb{F}_p[\strictAut({}_1\mathbb{G}^{\hat{\mathbb{Z}}_p\left[ \sqrt{p}\right]}_{1/2})]^*\right)\right)$, 
which lifts to the $1$-cochain
\[ t_{4} + t_{4}^p - t_{1}t_{3}^p - t_{1}^pt_{3} - \frac{1}{2}t_{2}^2 - \frac{1}{2}t_{2}^{2p} + t_{1}^{p+1}t_{2} + t_{1}^{p+1}t_{2}^p- \frac{1}{2}t_{1}^{2p+2},\]
in the cobar complex $C^{\bullet}\left(\mathbb{F}_p[\strictAut({}_1\mathbb{G}_{1/2})]^*\right)$.
Since this $1$-cochain is also a $1$-cocycle, all May differentials vanish on $\zeta_4$.
\end{description}
Now suppose that $q\geq 1$ is some integer and that we have already shown that $d_r$ vanishes on all classes, for all $r<q$. Then 
$d_r(\eta_2e_{40}\cdot \eta_4e_{40}) = 0$, i.e., $d_r$ vanishes on the duality class in the algebra $\mathcal{A}_{2,4}$.
For each element in that algebra, we have shown that $d_r$ vanishes on either that element, or on its Poincar\'{e} dual. Since $d_r$ also vanishes on the duality class, $d_r$ vanishes on all elements in that algebra.
Since $d_r$ also vanishes on $\zeta_2$ and $\zeta_4$, $d_r$ vanishes on all classes. By induction, the spectral sequence collapses with no nonzero differentials.
So 
$H^*(\strictAut({}_1\mathbb{G}^{\hat{\mathbb{Z}}_p\left[\sqrt{p}\right]}_{1/2}); \mathbb{F}_p) \cong \Cotor_{E^0\mathbb{F}_p[\strictAut({}_1\mathbb{G}^{\hat{\mathbb{Z}}_p\left[\sqrt{p}\right]}_{1/2})]^*}^{*,*,*}(\mathbb{F}_p,\mathbb{F}_p) \cong H^{*,*,*}(\mathcal{K}(2,4))$ as a graded $\mathbb{F}_p$-vector space. 
\end{proof}

\section{Topological consequences.}

It is well-known, e.g. from the Barsotti-Tate module generalization of the Dieudonn\'{e}-Manin classification of $p$-divisible groups over $\overline{k}$ (see~\cite{MR0157972}; also see~\cite{MR1393439} for a nice treatment of the theory of Barsotti-Tate modules), that the automorphism group scheme of a formal $A$-module of positive, finite height over a finite field is pro-\'{e}tale; in more down-to-earth terms, the Hopf algebra corepresenting the group scheme
$\Aut({}_{\omega}\mathbb{G}_{1/n}^{A}\otimes_{k}\overline{k})$ is the continuous $\overline{k}$-linear dual of the $\overline{k}$-linear group ring of some profinite group, namely, the automorphism group (honestly a group, not just a group scheme!) of ${}_{\omega}\mathbb{G}_{1/n}^{A}\otimes_{k}\overline{k}$. In this section we will cease to work with group schemes and we will simply write $\Aut({}_{\omega}\mathbb{G}_{1/n}^{A}\otimes_{k}\overline{k})$ for that profinite group.

The following is a generalization of a result in~\cite{formalmodules4}, and the argument is almost word-for-word the same:
\begin{theorem}\label{its a closed subgroup}
Let $K/\mathbb{Q}_p$ be a field extension of degree $d$. 
Let $A$ denote the ring of integers of $K$, and let $\pi$ denote a uniformizer for $A$ and $k$ the residue field of $A$. Let $q$ be the cardinality of $k$, and let $\omega$ denote a $\frac{q^{en}-1}{q^n-1}$th root of $\frac{\pi^e}{p}$ in $\overline{k}$.
Then $\Aut({}_{\omega}\mathbb{G}_{1/n}^{A}\otimes_{k}\overline{k})$ is a closed subgroup of the height $dn$ Morava stabilizer group $\Aut({}_1\mathbb{G}_{1/dn}^{\hat{\mathbb{Z}}_p}\otimes_{\mathbb{F}_p}\overline{k})$.
\end{theorem}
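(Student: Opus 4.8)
The plan is to realize the asserted inclusion as the map induced by restriction of scalars, and then to deduce that it is a closed embedding by a compactness argument. First I would recall that restriction of scalars along $\hat{\mathbb{Z}}_p\hookrightarrow A$ carries a formal $A$-module to its underlying formal $\hat{\mathbb{Z}}_p$-module, and that, by Proposition~\ref{assoc graded with complex mult} (a special case of Theorem~\ref{map induced in S(n)}), the underlying formal $\hat{\mathbb{Z}}_p$-module of ${}_{\omega}\mathbb{G}_{1/n}^{A}\otimes_k\overline{k}$ is precisely ${}_1\mathbb{G}_{1/dn}^{\hat{\mathbb{Z}}_p}\otimes_{\mathbb{F}_p}\overline{k}$. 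Since an automorphism of a formal module is nothing but an invertible power series subject to some equations, restriction of scalars produces a group homomorphism $\rho\colon \Aut({}_{\omega}\mathbb{G}_{1/n}^{A}\otimes_k\overline{k})\to\Aut({}_1\mathbb{G}_{1/dn}^{\hat{\mathbb{Z}}_p}\otimes_{\mathbb{F}_p}\overline{k})$ which simply forgets the $A$-linearity of the power series; because a morphism of formal modules is determined by its underlying power series, $\rho$ is injective. (On corepresenting Hopf algebras, the restriction of $\rho$ to strict automorphisms is dual to the surjection of Hopf algebras~\ref{map 4308443} of Theorem~\ref{map induced in S(n)}.)

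Next I would check continuity and closedness. Both $\Aut({}_{\omega}\mathbb{G}_{1/n}^{A}\otimes_k\overline{k})$ and the height $dn$ Morava stabilizer group are profinite — for the source this is exactly the pro-\'{e}tale statement recalled just before the theorem — and in both cases the topology is the $X$-adic topology on the relevant group of invertible power series (equivalently, the topology coming from the action on the finite truncations of the coordinate ring of the formal module); since $\rho$ is literally the inclusion of one such group of power series into another as subsets, it is continuous, and in fact a homeomorphism onto its image. A continuous injective homomorphism out of a compact group into a Hausdorff group has compact, hence closed, image, so this already proves the theorem. More explicitly, $\rho$ identifies its source with the centralizer of the structure embedding $A\hookrightarrow\operatorname{End}({}_1\mathbb{G}_{1/dn}^{\hat{\mathbb{Z}}_p}\otimes_{\mathbb{F}_p}\overline{k})$ — an embedding because $A$ is a discrete valuation ring and, in positive height, no power of $[\pi]$ is the zero power series — inside the unit group of the profinite ring $\operatorname{End}({}_1\mathbb{G}_{1/dn}^{\hat{\mathbb{Z}}_p}\otimes_{\mathbb{F}_p}\overline{k})$; writing $A=\hat{\mathbb{Z}}_p[\alpha]$, this centralizer is the intersection of the closed set $\{\phi : \phi\alpha=\alpha\phi\}$ with the unit group, hence closed.

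The one point that needs genuine care is the topological bookkeeping in the previous paragraph: one must confirm that the abstract profinite topology on the automorphism group of a formal $A$-module (as furnished by the pro-\'{e}tale statement) agrees with the subspace topology it inherits from the Morava stabilizer group, so that $\rho$ is a homeomorphism onto its image rather than merely a continuous bijection onto it. Everything else is either formal or, in the case of the appeals to Theorem~\ref{map induced in S(n)} and Proposition~\ref{assoc graded with complex mult}, routine; the argument runs almost word-for-word as in~\cite{formalmodules4}.
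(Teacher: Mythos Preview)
Your argument is correct and takes a genuinely different route from the paper. You deduce closedness from the elementary fact that a continuous injection from a compact space into a Hausdorff space has closed image (and you supplement this with the centralizer description, which is also sufficient on its own). The paper instead writes the subgroup as an intersection $\bigcap_a H_a$, where $H_a\subset\Aut({}_1\mathbb{G}_{1/dn}^{\hat{\mathbb{Z}}_p}\otimes_{\mathbb{F}_p}\overline{k})$ consists of those automorphisms whose underlying $a$-bud map is $A$-linear; each $H_a$ has finite index (bounded by the order of the finite bud-automorphism group), and the paper then invokes the Nikolov--Segal theorem---every finite-index subgroup of a topologically finitely generated profinite group is open---together with the topological finite generation of the Morava stabilizer group, to conclude each $H_a$ is open, hence closed, hence so is the intersection. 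Your approach is more direct and avoids this heavy machinery; the paper's approach, by contrast, never needs to verify that the source group is compact or that $\rho$ is continuous, working entirely inside the target group. One small remark: your closing sentence that ``the argument runs almost word-for-word as in~\cite{formalmodules4}'' echoes the paper's own preamble, but that phrase refers to the Nikolov--Segal argument the paper reproduces from~\cite{formalmodules4}, not to a compactness argument like yours.
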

\begin{proof}
By Theorem~\ref{map induced in S(n)}, the underlying formal $\hat{\mathbb{Z}}_p$-module of ${}_{\omega}\mathbb{G}_{1/n}^{A}$ is ${}_1\mathbb{G}_{1/dn}^{\hat{\mathbb{Z}}_p}$.
Hence the automorphisms of ${}_{\omega}\mathbb{G}_{1/n}^{A}\otimes_{k}\overline{k}$ are the automorphisms of ${}_1\mathbb{G}_{1/dn}^{\hat{\mathbb{Z}}_p}\otimes_{\mathbb{F}_p}\overline{k}$ which commute with the complex multiplication by $A$, and hence
$\Aut({}_{\omega}\mathbb{G}_{1/n}^{A}\otimes_{k}\overline{k})$ is a subgroup of $\Aut({}_1\mathbb{G}_{1/dn}^{\hat{\mathbb{Z}}_p}\otimes_{\mathbb{F}_p}\overline{k})$.

Now let $G_a$ denote the automorphism group of the underlying formal $\hat{\mathbb{Z}}_p$-module $a$-bud of ${}_1\mathbb{G}_{1/dn}^{\hat{\mathbb{Z}}_p}\otimes_{\mathbb{F}_p}\overline{k}$, so that
$\Aut({}_1\mathbb{G}_{1/dn}^{\hat{\mathbb{Z}}_p}\otimes_{\mathbb{F}_p}\overline{k})$ is, as a profinite group, the limit of the sequence of finite groups $\dots \rightarrow G_3\rightarrow G_2 \rightarrow G_1$. Let $H_a$ denote the subgroup of 
$\Aut({}_1\mathbb{G}_{1/dn}^{\hat{\mathbb{Z}}_p}\otimes_{\mathbb{F}_p}\overline{k})$ consisting of those automorphisms whose underlying formal $\hat{\mathbb{Z}}_p$-module $a$-bud automorphism commutes with the complex multiplication by $A$, i.e., those
whose underlying formal $\hat{\mathbb{Z}}_p$-module $a$-bud automorphism is an automorphism of the underlying formal $A$-module $a$-bud of ${}_{\omega}\mathbb{G}_{1/n}^A$.
The index of $H_a$ in $\Aut({}_1\mathbb{G}_{1/dn}^{\hat{\mathbb{Z}}_p}\otimes_{\mathbb{F}_p}\overline{k})$ is at most the cardinality of $G_a$, hence is finite. Now we use the theorem of Nikolov-Segal, from~\cite{MR2276769}: every finite-index subgroup of a topologically finitely generated profinite group is an open subgroup.
The group $\Aut({}_1\mathbb{G}_{1/dn}^{\hat{\mathbb{Z}}_p}\otimes_{\mathbb{F}_p}\overline{k})$ is topologically finitely generated since it has the well-known presentation
\[ \Aut({}_1\mathbb{G}_{1/dn}^{\hat{\mathbb{Z}}_p}\otimes_{\mathbb{F}_p}\overline{k}) \cong \left( W(\overline{k})\langle S\rangle /(S^n = p, x^{\sigma} S = Sx)\right)^{\times},\]
where $W(\overline{k})\langle S\rangle$ is the Witt ring of the field $\overline{k}$ with a noncommuting polynomial indeterminate $S$ adjoined, and the relation $x^{\sigma} S = Sx$ is to hold for all $x\in W(\overline{k})$, where $x^{\sigma}$ is a lift of the Frobenius automorphism of $\overline{k}$ to $W(\overline{k})$, applied to $x$; since \[ W(\overline{k})\langle S\rangle /\left(S^n = p, x^{\sigma} S = Sx)\right)\] reduced modulo $S$ is a finite ring, $\Aut({}_1\mathbb{G}_{1/dn}^{\hat{\mathbb{Z}}_p}\otimes_{\mathbb{F}_p}\overline{k})$ has a dense finitely generated subgroup.

So $H_a$ is an open subgroup of $\Aut({}_1\mathbb{G}_{1/dn}^{\hat{\mathbb{Z}}_p}\otimes_{\mathbb{F}_p}\overline{k})$. Every open subgroup of a profinite group is also closed; consequently each $H_a$ is a closed subgroup of $\Aut({}_1\mathbb{G}_{1/dn}^{\hat{\mathbb{Z}}_p}\otimes_{\mathbb{F}_p}\overline{k})$, and consequently the intersection $\cap_a H_a$ is a closed subgroup of $\Aut({}_1\mathbb{G}_{1/dn}^{\hat{\mathbb{Z}}_p}\otimes_{\mathbb{F}_p}\overline{k})$. But $\cap_a H_a$ is the group of all formal power series which are automorphisms of ${}_1\mathbb{G}_{1/dn}^{\hat{\mathbb{Z}}_p}\otimes_{\mathbb{F}_p}\overline{k}$ and whose polynomial truncations, of any length, commute with the complex multiplication by $A$. Consequently $\cap_a H_a = \Aut({}_{\omega}\mathbb{G}_{1/n}^A\otimes_k\overline{k})$ is a closed subgroup of $\Aut({}_1\mathbb{G}_{1/dn}^{\hat{\mathbb{Z}}_p}\otimes_{\mathbb{F}_p}\overline{k})$.
\end{proof}

Since $\Aut({}_{\omega}\mathbb{G}_{1/n}^A\otimes_k\overline{k})$ is a closed subgroup of the height $dn$ Morava stabilizer group $\Aut({}_1\mathbb{G}_{1/dn}^{\hat{\mathbb{Z}}_p}\otimes_{\mathbb{F}_p}\overline{k})$, we can use the methods of~\cite{MR2030586} to construct and compute the homotopy fixed-point spectra 
\[ E_4^{h\Aut({}_{\omega}\mathbb{G}_{1/n}^{A}\otimes_{k}\overline{k})} \mbox{\ \ \ and\ \ \ } E_4^{h\Aut({}_{\omega}\mathbb{G}_{1/n}^{A}\otimes_{k}\overline{k})\rtimes \Gal(\overline{k}/k)}.\]
The homotopy fixed-point spectrum 
$E_4^{h\Aut({}_{1}\mathbb{G}_{1/dn}^{\hat{\mathbb{Z}}_p}\otimes_{\mathbb{F}_p}\overline{k})\rtimes \Gal(\overline{k}/\mathbb{F}_p)}\simeq L_{K(4)}S$ 
has a natural map to
$E_4^{h\Aut({}_{\omega}\mathbb{G}_{1/n}^{A}\otimes_{k}\overline{k})\rtimes \Gal(\overline{k}/\mathbb{F}_p)}$, but this map is far from being an equivalence; still, very few computations of homotopy groups of $K(4)$-local spectra exist in the literature, so Theorem~\ref{topological computation} is perhaps of some interest.
\begin{theorem}\label{topological computation}
Let $p$ be a prime number such that the Smith-Toda complex $V(3)$ exists, i.e., $p>5$. Then the $V(3)$-homotopy groups of
$E_4^{h\Aut({}_{1}\mathbb{G}_{1/2}^{\hat{\mathbb{Z}}_p[\sqrt{p}]}\otimes_{\mathbb{F}_p}\overline{\mathbb{F}}_p)\rtimes \Gal(\overline{k}/\mathbb{F}_p)}$ are isomorphic to
\[ \mathcal{A}_{2,4} \otimes_{\mathbb{F}_p} \Lambda(\zeta_2,\zeta_4)\otimes_{\mathbb{F}_p} \mathbb{F}_p[v^{\pm 1}],\]
where $v^{p^2+1} = v_4$,
where $\mathcal{A}_{2,4}$ is as in Proposition~\ref{coh of E 4 4 0}, and the topological degrees and $E_4$-Adams filtrations are as follows:
\[
\begin{array}{llllll}
\mbox{Htpy.\ class}          & \mbox{Top.\ degree} & E_4-\mbox{Adams\ filt.} \\
\hline \\
1                                 & 0              & 0 \\
h_{10}                            & 2p-3           & 1 \\
h_{11}                            & 2p^2-2p-1       & 1 \\
h_{10}h_{30}                       & 4p-6          & 2 \\
h_{11}h_{31}                       & 4p^2-4p-2      & 2 \\
h_{10}\eta_4-\eta_2h_{30}          & 2p-4           & 2 \\
h_{11}\eta_4-\eta_2h_{31}          & 2p^2-2p-2      & 2 \\
\eta_2e_{40}                      & -3             & 3 \\
h_{10}\eta_2h_{30}                 & 4p-7          & 3 \\
h_{11}\eta_2h_{31}                 & 4p^2-4p-3     & 3 \\
h_{10}h_{30}\eta_4                 & 4p-7          & 3 \\
h_{11}h_{31}\eta_4                 & 4p^2-4p-3     & 3 \\
\eta_4e_{40}+4\eta_2h_{30}h_{31}   & -3             & 3 \\
h_{10}\eta_2h_{30}h_{31}           & 2p-6           & 4 \\
h_{11}\eta_2h_{30}h_{31}           & 2p^2-2p-4       & 4 \\
h_{10}\eta_2h_{30}\eta_4           & 4p-8           & 4 \\
h_{11}\eta_2h_{31}\eta_4           & 4p^2-4p-4      & 4 \\
h_{10}\eta_2h_{30}h_{31}\eta_4      & 2p-7          & 5 \\
h_{11}\eta_2h_{30}h_{31}\eta_4      & 2p^2-2p-5     & 5 \\
h_{10}h_{11}\eta_2h_{30}h_{31}\eta_4 & -6            & 6 \\
\hline \\
\zeta_2                          & -1                   & 1 \\
\zeta_4                          & -1                   & 1 \\
\hline \\
v                                & 2p^2-2           & 0 
    \end{array} \]
\end{theorem}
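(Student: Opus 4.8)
The plan is to run the $K(4)$-local $E_4$-Adams (descent) spectral sequence for the action of $G := \Aut({}_1\mathbb{G}_{1/2}^{\hat{\mathbb{Z}}_p[\sqrt{p}]}\otimes_{\mathbb{F}_p}\overline{\mathbb{F}}_p)\rtimes \Gal(\overline{\mathbb{F}}_p/\mathbb{F}_p)$ on $E_4$, smashed with the Smith--Toda complex $V(3)$, exactly as Devinatz--Hopkins~\cite{MR2030586} set it up; by Theorem~\ref{its a closed subgroup} this action makes sense since $G$ is a closed subgroup of the height-four Morava stabilizer group extended by Galois. The $E_2$-term is the continuous cohomology $H^*_c(G; (E_4)_*V(3))$. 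Since $V(3)$ kills $v_0=p,v_1,v_2,v_3$, one has $(E_4)_*V(3) \cong \mathbb{F}_p[v_4^{\pm 1}]$ with $v_4$ in topological degree $2(p^4-1)$, so the coefficient module is a graded field on which $G$ acts; the first step is to identify $H^*_c(G;\mathbb{F}_p[v_4^{\pm1}])$ with $H^*(\strictAut({}_1\mathbb{G}_{1/2}^{\hat{\mathbb{Z}}_p[\sqrt{p}]});\mathbb{F}_p)\otimes_{\mathbb{F}_p}\mathbb{F}_p[v_4^{\pm1}]$, i.e., with $\mathcal{A}_{2,4}\otimes_{\mathbb{F}_p}\Lambda(\zeta_2,\zeta_4)\otimes_{\mathbb{F}_p}\mathbb{F}_p[v_4^{\pm1}]$ via Theorem~\ref{coh of ht 2 fm}. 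Here one must check that the Galois contribution is trivial with $\mathbb{F}_p$-coefficients (the Galois group has cohomological dimension one and contributes nothing new because we have already descended to the $\mathbb{F}_p$-rational automorphism group in Theorem~\ref{coh of ht 2 fm}, which is the point of working with $\strictAut$ over $k$ rather than over $\overline{k}$), and that passing from $\strictAut$ to $\Aut$ only adds the invertible class $v$ with $v^{p^2+1}=v_4$ coming from the determinant/norm $\hat{\mathbb{Z}}_p^\times$-quotient — this is the standard device of introducing $v$ in topological degree $2(p^2-1)$ so that $\mathbb{F}_p[v^{\pm1}]$ refines $\mathbb{F}_p[v_4^{\pm1}]$ and records the internal grading.

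The next step is to fix the topological (stem) degrees. Each cohomology class $x$ of internal degree $u$ (reduced mod $2(p^2-1)$ in $\mathcal{K}(2,4)$) and cohomological degree $s$ contributes to stem $u-s$ once the periodicity generator $v$ of degree $2(p^2-1)$ is adjoined; so the table of topological degrees is obtained mechanically from table~\ref{degree chart 4} by the rule (stem) $= $ (internal degree) $-$ (cohomological degree), lifting the internal degree from its residue mod $2(p^2-1)$ to the honest value it has in $BP_*BP$ before reduction — this is why e.g. $h_{10}$, with internal degree $2(p-1)$ and cohomological degree $1$, lands in stem $2p-3$, while $h_{11}$, whose pre-reduction internal degree is $2p(p-1)=2p^2-2p$, lands in stem $2p^2-2p-1$. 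One records the $E_4$-Adams filtration as simply the cohomological degree $s$. The classes $\zeta_2,\zeta_4$ each sit in stem $-1$ and filtration $1$, and $v$ sits in stem $2p^2-2$ and filtration $0$; this reproduces the stated table.

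Finally one argues the spectral sequence collapses at $E_2$ with no nontrivial extensions. For collapse: the $E_2$-term is concentrated in cohomological degrees $0$ through $8$ (cohomological dimension of $G$ with these coefficients is $8$, since $\strictAut$ has dimension $6$ by Theorem~\ref{coh of ht 2 fm} and the two $\zeta$'s add $2$ more, while $v$ adds nothing), and $d_r$ raises Adams filtration by $r$ and lowers stem by $1$; one checks, degree by degree against the Poincaré series, that for each basis element either the element or its target group is zero for sparsity reasons — concretely, sparsity of the internal grading mod $2(p^2-1)$, together with the fact that differentials must preserve internal degree and the $v$-periodicity, forces all $d_r=0$ just as in the proof of Theorem~\ref{coh of ht 2 fm} where the analogous May differentials were shown to vanish. (One can also invoke $v$-linearity and the Poincaré-duality pairing on $\mathcal{A}_{2,4}$: a differential on any class would, by multiplicativity, produce a differential on the duality class $h_{10}h_{11}\eta_2h_{30}h_{31}\eta_4\cdot\zeta_2\zeta_4$, which lives in too high a filtration to support or receive one.) For the absence of extensions, the $\mathbb{F}_p$-module structure is forced since everything is $\mathbb{F}_p[v^{\pm1}]$-free, and there is no room for multiplicative extensions beyond those already present in $\mathcal{A}_{2,4}\otimes\Lambda(\zeta_2,\zeta_4)$. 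The main obstacle I expect is the bookkeeping in the collapse argument: one must be careful that the internal grading, which is only defined mod $2(p^{fn}-1)=2(p^2-1)$ in the purely algebraic computation, lifts consistently to genuine integer stems after introducing $v$, and that no "long" $d_r$ sneaks between two classes whose internal degrees happen to agree mod $2(p^2-1)$ but whose genuine stems differ by exactly $1$ — ruling this out requires the finer internal-degree information recorded in table~\ref{degree chart 4} rather than just the mod-$2(p^2-1)$ residues.
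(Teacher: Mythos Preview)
Your overall strategy is the same as the paper's: run the Devinatz--Hopkins descent spectral sequence, identify the $E_2$-term via Theorem~\ref{coh of ht 2 fm} and a Lyndon--Hochschild--Serre step for $\strictAut\subset\Aut$, then argue collapse by sparsity. Two points, however, are handled more carefully in the paper and are somewhat garbled in your sketch.

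First, the coefficients: $(E_4)_*V(3)\cong \overline{\mathbb{F}}_p[u^{\pm 1}]$, not $\mathbb{F}_p[v_4^{\pm 1}]$; the Galois group has not yet been dealt with at this stage. The quotient $\Aut/\strictAut$ is $\mathbb{F}_{p^2}^{\times}$ (not a ``$\hat{\mathbb{Z}}_p^{\times}$-quotient''), acting on $\overline{\mathbb{F}}_p\{u^j\}$ by $x\mapsto x^j$; this is what forces the cohomology to vanish unless $t\equiv 0\pmod{2(p^2-1)}$ and is the source of the class $v$. Second, your Galois step (``cohomological dimension one and contributes nothing new'') is not an argument: what is needed is that the computation of Theorem~\ref{coh of ht 2 fm} and the $\Gal(\overline{\mathbb{F}}_p/\mathbb{F}_p)$-invariants of $H^*_c(\Aut;\overline{\mathbb{F}}_p[u^{\pm 1}])$ are two $\Gal$-forms of the same $\overline{\mathbb{F}}_p$-vector space, and the paper invokes the vanishing of $H^1(\Gal(\overline{\mathbb{F}}_p/\mathbb{F}_p);GL_n(\overline{\mathbb{F}}_p))$ (generalized Hilbert~90) to conclude they agree as $\mathbb{F}_p$-vector spaces. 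Finally, your collapse argument is more elaborate than necessary: once you know the $E_2$-term vanishes for $t\not\equiv 0\pmod{2(p^2-1)}$ and has a horizontal vanishing line at $s=8$, the differential $d_r\colon E_r^{s,t}\to E_r^{s+r,t+r-1}$ is automatically zero since $2(p^2-1)>8$ for $p>5$; no case-by-case check against table~\ref{degree chart 4} or Poincar\'{e}-duality trick is required. (Your parenthetical worry about ``long'' $d_r$'s between classes of the same internal residue is thus moot.) Also, a small slip: the $\zeta$'s are already part of $H^*(\strictAut)$ in Theorem~\ref{coh of ht 2 fm}, so the cohomological dimension of $\strictAut$ is $8$, not $6$.
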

\begin{proof}
See~\cite{MR1333942} and ~\cite{MR2030586} for the equivalence
\[ L_{K(n)}S \simeq E_n^{h\Aut({}_1\mathbb{G}_{1/n}^{\hat{\mathbb{Z}}_p}\otimes_{\mathbb{F}_p}\overline{\mathbb{F}}_{p})\rtimes\Gal(\overline{\mathbb{F}}_p/\mathbb{F}_p)} .\]
Since $V(3)$ is $E(3)$-acyclic, $L_{K(4)}V(3)$ is weakly equivalent to $L_{E(4)}V(3)$, so $L_{K(4)}V(3) \simeq L_{E(4)}V(3) \simeq V(3) \smash L_{E(4)}S$ since $E(4)$-localization is smashing; see~\cite{MR1192553} for the proof of Ravenel's smashing conjecture.
Since $V(3)$ is finite, $(E_4\smash V(3))^{hG} \simeq E_4^{hG} \smash V(3)$, and now we use the $X = V(3)$ case of the conditionally convergent descent spectral sequence (see e.g.~4.6 of~\cite{MR2645058}, or~\cite{MR2030586})
\begin{align*} 
 E_2^{s,t} \cong H^s_c(G; (E_n)_t(X)) & \Rightarrow \pi_{t-s}((E_n\smash X)^{hG}) \\
  d_r: E_r^{s,t} & \rightarrow E_r^{s+r,t+r-1}.\end{align*}
The agreement of this spectral sequence with the $K(4)$-local $E_4$-Adams spectral sequence is given by Proposition~6.6 of~\cite{MR2030586}.

In the case $n=4$ and $X =V(3)$, we have $(E_4)_*\cong W(\overline{\mathbb{F}}_{p})[[u_1,u_2,u_3]][u^{\pm 1}]$ with $v_i$ acting by $u_iu^{1-p^i}$ for $i=1,2,3$, and consequently $(E_4)_*(V(3)) \cong \overline{\mathbb{F}}_p[u^{\pm 1}]$. One needs to know the action of $\Aut({}_1\mathbb{G}_{1/2}^{\hat{\mathbb{Z}}_p[\sqrt{p}]}\otimes_{\mathbb{F}_p}\overline{\mathbb{F}}_{p})\rtimes \Gal(\overline{\mathbb{F}}_p/\mathbb{F}_p)$ on $\overline{\mathbb{F}}_{p}[u^{\pm 1}]$ to compute the $E_2$-term of the spectral sequence; but $\Aut({}_1\mathbb{G}_{1/2}^{\hat{\mathbb{Z}}_p[\sqrt{p}}\otimes_{\mathbb{F}_p}\overline{\mathbb{F}}_{p})$ has the finite-index pro-$p$-subgroup 
$\strictAut({}_1\mathbb{G}_{1/2}^{\hat{\mathbb{Z}}_p[\sqrt{p}}\otimes_{\mathbb{F}_p}\overline{\mathbb{F}}_{p})$. As a pro-$p$-group admits no nontrivial continuous action on a one-dimensional vector space over a field of characteristic $p$, we only need to know the action of $\Aut({}_1\mathbb{G}_{1/2}^{\hat{\mathbb{Z}}_p[\sqrt{p}]}\otimes_{\mathbb{F}_p}\overline{\mathbb{F}}_p)/\strictAut({}_1\mathbb{G}_{1/2}^{\hat{\mathbb{Z}}_p[\sqrt{p}]}\otimes_{\mathbb{F}_p}\overline{\mathbb{F}}_p)\cong \mathbb{F}_{p^2}^{\times}$ on $\overline{\mathbb{F}}_{p}[u^{\pm 1}]$; 
from section~1 of~\cite{MR1333942} we get that an element $x\in \mathbb{F}_{p^2}^{\times}$ acts on 
$\overline{\mathbb{F}}_{p}\{u^j\}$ by multiplication by $x^j$. 
Consequently
the (collapsing at $E_2$) Lyndon-Hochschild-Serre spectral sequence of the extension
\[ 1 \rightarrow \strictAut({}_1\mathbb{G}_{1/2}^{\hat{\mathbb{Z}}_p[\sqrt{p}]}\otimes_{\mathbb{F}_p}\overline{\mathbb{F}}_p) \rightarrow \Aut({}_1\mathbb{G}_{1/2}^{\hat{\mathbb{Z}}_p[\sqrt{p}]}\otimes_{\mathbb{F}_p}\overline{\mathbb{F}}_p) \rightarrow \mathbb{F}_{p^2}^{\times} \rightarrow 1\]
gives us that $H^*_c(\Aut({}_1\mathbb{G}_{1/2}^{\hat{\mathbb{Z}}_p[\sqrt{p}]}\otimes_{\mathbb{F}_p}\overline{\mathbb{F}}_p); V(3)_t(E_4^{h\Aut({}_1\mathbb{G}_{1/2}^{\hat{\mathbb{Z}}_p[\sqrt{p}]}\otimes_{\mathbb{F}_p}\overline{\mathbb{F}}_p)}))$ vanishes if $t$ is not divisible by $2(p^2-1)$, and is given by Proposition~\ref{coh of ht 2 fm} if $t$ is divisible by $2(p^2-1)$.
So there is a horizontal vanishing line of finite height already at the $E_2$-page of the spectral sequence, hence the spectral sequence converges strongly.

More specifically, the cohomology computed in Proposition~\ref{coh of ht 2 fm} is a $\Gal(\overline{\mathbb{F}}_p/\mathbb{F}_p)$-form of 
\[ H^*_c(\Aut({}_1\mathbb{G}_{1/2}^{\hat{\mathbb{Z}}_p[\sqrt{p}]}\otimes_{\mathbb{F}_p}\overline{\mathbb{F}}_p); V(3)_{2(p^2-1)j}(E_4^{h\Aut({}_1\mathbb{G}_{1/2}^{\hat{\mathbb{Z}}_p[\sqrt{p}]}\otimes_{\mathbb{F}_p}\overline{\mathbb{F}}_p)})),\] and
\[ H^*_c(\Aut({}_1\mathbb{G}_{1/2}^{\hat{\mathbb{Z}}_p[\sqrt{p}]}\otimes_{\mathbb{F}_p}\overline{\mathbb{F}}_p); V(3)_{2(p^2-1)j}(E_4^{h\Aut({}_1\mathbb{G}_{1/2}^{\hat{\mathbb{Z}}_p[\sqrt{p}]}\otimes_{\mathbb{F}_p}\overline{\mathbb{F}}_p)}))^{\Gal(\overline{\mathbb{F}}_p/\mathbb{F}_p)}\] is also a $\Gal(\overline{\mathbb{F}}_p/\mathbb{F}_p)$-form of 
\[ H^*_c(\Aut({}_1\mathbb{G}_{1/2}^{\hat{\mathbb{Z}}_p[\sqrt{p}]}\otimes_{\mathbb{F}_p}\overline{\mathbb{F}}_p); V(3)_{2(p^2-1)j}(E_4^{h\Aut({}_1\mathbb{G}_{1/2}^{\hat{\mathbb{Z}}_p[\sqrt{p}]}\otimes_{\mathbb{F}_p}\overline{\mathbb{F}}_p)})).\] Since the nonabelian Galois cohomology group $H^1(\Gal(\overline{\mathbb{F}}_{p}/\mathbb{F}_p); GL_n(\overline{\mathbb{F}}_{p}))$ classifying $\Gal(\overline{\mathbb{F}}_{p}/\mathbb{F}_p)$-forms of $n$-dimensional $\overline{\mathbb{F}}_{p}$-vector spaces vanishes (this is a well-known generalization of Hilbert's Theorem 90), the invariants of the $\Gal(\overline{\mathbb{F}}_{p}/\mathbb{F}_p)$-action on \[ H^*_c(\Aut({}_1\mathbb{G}_{1/2}^{\hat{\mathbb{Z}}_p[\sqrt{p}]}\otimes_{\mathbb{F}_p}\overline{\mathbb{F}}_p); V(3)_{2(p^2-1)j}(E_4^{h\Aut({}_1\mathbb{G}_{1/2}^{\hat{\mathbb{Z}}_p[\sqrt{p}]}\otimes_{\mathbb{F}_p}\overline{\mathbb{F}}_p)}))\] agree, up to isomorphism of graded $\mathbb{F}_p$-vector spaces, with the results of Proposition~\ref{coh of ht 2 fm} (this Galois descent argument was suggested to me by T. Lawson). There is no room for differentials in the descent spectral sequence, so $E_2\cong E_{\infty}$ in the spectral sequence.
\end{proof}

\bibliography{/home/asalch/texmf/tex/salch}{}
\bibliographystyle{plain}
\end{document}